\documentclass[12pt, reqno, oneside]{amsart}
\usepackage{macros}
\usepackage[
backend=biber,
url = false,
doi = false,
isbn = false
]{biblatex}
\appto{\bibsetup}{\raggedright}
\renewbibmacro{in:}{}
\numberwithin{equation}{section}
\title{Nodal degeneration of chiral algebras II:\@ Local structure and chiral Zhu algebras} 
\author{Elchanan Nafcha} 
\date{}

\begin{document}

\pagestyle{plain}

\begin{abstract}
    Given a vertex algebra $V$, Zhu constructed an associative algebra $A(V)$, whose representation theory provides an approximation to the category of $V$-modules. We describe a geometric construction of a certain derived associative algebra $\fZ_{\cA}^0$ associated to any universal factorization algebra $\cA$, whose zeroth homology recovers Zhu's associative algebra in the case where $\cA$ is obtained from a vertex algebra. The construction is given by integrating $\cA$ over stable configurations parametrized by two-pointed semistable genus zero curves. By a variant of this construction, we obtained a chiral $\cA$-bimodule $\fZ_{\cA}$ describing the value of a factorization algebra at a nodal point. We show that its zeroth homology recovers the bimodule underlying the mode-transition algebra $\fA(V)$ defined by Damiolini, Gibney, and Krashen, which they use to describes a vertex algebra at a node. Finally, the value over a formal smoothing of a node provides a deformation $\tilde{\fZ}_{\cA}$ of $\fZ_{\cA}$, whose zeroth homology describes a deformation of the mode-transition algebra in the case of a vertex algebra.
\end{abstract}

\maketitle
\tableofcontents

\section{Introduction}

The theory of vertex algebras provides a mathematical formalism for the structure of local observables in a two-dimensional conformal field theory. It was introduced by Borcherds~\cite{Bor}, Frenkel, Lepowsky, and Meurman~\cite{FLM} in their work on the representation theory of the monster group, and has important applications to the study of infinite dimensional Lie algebras~\cite{FZ} and moduli stacks of $G$-bundles~\cite{LS}. Its basic structure is given by a vector space $V$ of states, together with a ``state-field correspondence'' \[Y : V \to \operatorname{End}V[\![z^{\pm 1}]\!]\] assigning to a state $v\in V$ a field of operators $v(z)$ over the formal disk $\hat{D}$, see~\ref{subsec:VOA} for a complete definition. A more general state space is given by the structure of a $V$-module, which roughly amounts to a vector space $M$ and an assignment of fields of operators over $\hat{D}$ for each $v\in V$: \[Y^M : V \to \operatorname{End}M[\![z^{\pm 1}]\!]~.\] 

A derived and global generalization to the theory of vertex algebras is provided by Beilinson and Drinfeld's chiral and factorization algebras~\cite{BD}. A factorization algebra over a smooth complex curve $X$ is, roughly speaking, an assignment of a vector space (or a chain complex) $\cA_{x_1,\dots,x_n}$ to any finite subset $\{x_1,\dots,x_n\} \subset X$, as well as isomorphisms \[\cA_{x_1,\dots,x_n} \simeq \cA_{x_1} \otimes \dotsb \otimes \cA_{x_n}\] whenever $x_i \cap x_j = \emptyset$ for $i \neq j$. Any $\operatorname{Aut}\hat{D}$-equivariant vertex algebra gives rise to a factorization algebra, such that $\cA_{x} \simeq V$ for each $x$, see~\cite{FBZ}. A factorization algebra constructed this way is known as a universal factorization algebra --- a factorization algebra defined over any smooth curve in a compatible way. Factorization algebras provide a crucial ingredient for the proof of the geometric Langlands conjecture~\cite{arinkinProofGeometricLanglands2024}. Intuitively, they allow one to decompose global objects over a curve into local pieces around finite subsets, and then reassemble them into a global object using factorization homology, given by integrating over all finite subsets \[\int_X \cA \simeq \colim_{x_1,\dots,x_n \in X} \cA_{x_1,\dots,x_n}\] (see Definition~\ref{def:fact-homology}). Its zeroth homology is what is known as the dual space of conformal blocks in the theory of vertex algebras.

An important tool in understanding the category of $V$-modules is the Zhu algebra $A(V)$ defined in~\cite{Zhu}. This algebra is generally much easier to understand than $V$ itself, and can be viewed as an approximation to the representation theory of $V$. Indeed, Zhu constructed a map \begin{equation} \label{eq:zhu-to-V}
    A(V)\mh\Mod \to V\mh\Mod
\end{equation} and, by taking a maximal quotient, a bijection on irreducible modules. 

As has already been noted by Zhu~\cite{Zhu}, the algebra $A(V)$ seems to control the behavior of a vertex algebra along nodal degenerations, and has an important role in the gluing formula, relating $V$-conformal blocks over a nodal curve to $V$-conformal blocks over its normalization. Indeed, this algebra plays an important role in the proof of the Verlinde formula for rational $C_2$-cofinite vertex algebras~\cite{DGT}. An extension of this algebra, the mode-transition algebra $\fA(V)$, is used in~\cite{DGK} to extend any vertex algebra to stable curves. In~\cite{vanekerenFirstChiralHomology2024}, Van Ekeren and Heluani prove that the analytic limit of the zeroth and first factorization homology groups along nodal degeneration of elliptic curves is equivalent to the zeroth and first Hochschild homology groups of the Zhu algebra. Intuitively, the Zhu algebra plays the role of $SO(2)$-invariants of factorization homology over an infinite length cylinder~\cite{BZMO}, so that the Verlinde formula gives an algebraic limit of gluing factorization algebras over Riemann surfaces along a cylinder in the topological~\cite{AF} and analytic~\cite{bartelsConformalNetsII2017} contexts. However, to the best of our knowledge, no general formulation of these ideas appears in the literature.  

Our main goal in this paper is to provide a geometric construction for the Zhu algebra and mode-transition algebra in factorization terms, and show that the latter is naturally associated to a node, while the former appears naturally in a gluing formula for factorization homology over nodal curves. In~\cite{Naf2}, we constructed a (derived) associative algebra $\fZ_{\cA}^0$ for any universal factorization algebra, as well as $\fZ_{\cA}^0$-modules $\fZ_{\cA}^+ \simeq \fZ_{\cA}^-$ with an additional structure of chiral modules associated to punctures. We then defined a chiral bimodule $\fZ_{\cA}$ associated to nodes, which is isomorphic to the tensor product $\fZ_{\cA}^+ \underset{\fZ_{\cA}^0}{\otimes} \fZ_{\cA}^-$. We refer to $\fZ_{\cA}^0$ as the chiral Zhu algebra of $\cA$. Our main theorem is the following:

\begin{theorem}
    (see Theorem~\ref{thm:H0ZA}) For a universal factorization algebra $\cA$ corresponding to a vertex operator algebra $V$, there is an isomorphism of associative algebras \[H_0\fZ_{\cA}^0 \simeq A(V)\] between the zeroth homology of the chiral Zhu algebra associated to $\cA$ and the usual Zhu algebra associated to $V$, as well as an isomorphism of vector spaces \[H_0\fZ_{\cA} \simeq \fA(V)\] between the zeroth homology of the node $\cA$-module $\fZ_{\cA}$ and the underlying vector space of the mode transition algebra.
\end{theorem}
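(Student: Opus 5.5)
The plan is to compute $H_0$ of each construction directly from its description as factorization homology, that is, as a colimit over configurations. Since $\cA$ is a chiral algebra concentrated in nonpositive cohomological degrees after the usual shift, the zeroth homology of a colimit of this kind is a coinvariant space. This is the conformal-blocks presentation: the zeroth chiral homology is the quotient of the fiber by the image of the chiral action of sections with poles along the marked points.

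\textbf{Step 1: reduce $\fZ_{\cA}^0$ to a coinvariant space.} By construction, $\fZ_{\cA}^0$ is obtained by integrating $\cA$ over stable configurations on two-pointed semistable genus zero curves. I would show that the chains of components of such a curve contribute trivially in degree zero, so that $H_0\fZ_{\cA}^0$ reduces to the contribution of the open stratum. That stratum is the curve $\mathbb{P}^1$ with marked points $0$ and $\infty$ and one insertion point, say at $1$, taken modulo the $\mathbb{G}_m$-action. Concretely, I expect an identification
\[H_0\fZ_{\cA}^0 \simeq V\Big/\Big(\text{image of } \Gamma\big(\mathbb{P}^1\setminus\{0,1,\infty\},\cA\big)\ \text{acting at } 1\Big),\]
where the sections are restricted to have a pole of bounded order at $0$ and $\infty$. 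This bound is what the semistable, as opposed to nodal, boundary should encode.

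\textbf{Step 2: match this quotient with Zhu's $O(V)$.} Zhu's relation
\[\operatorname{Res}_z\, Y(a,z)\,b\,\frac{(1+z)^{\mathrm{wt}\,a}}{z^{2}}\]
is exactly a residue against the form $\frac{(1+z)^{\mathrm{wt}\,a}}{z^2}\,dz$ in the coordinate $z = w-1$. This form has poles of order two at $w=1$, order at most one in the appropriate weight normalization at $w=0$, and the matching order at $w=\infty$. I would therefore check two things: first, that the allowed sections in Step 1 are spanned by the translates of this form and its derivatives; second, that the $\mathbb{G}_m$-equivariance accounts for the $L_0$-weight twist. Together these show that the quotient is $V/O(V)$.

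\textbf{Step 3: compare the algebra structures.} The product on $\fZ_{\cA}^0$ comes from gluing two such curves at a node, so the glued curve has two insertion points, at $1$ and at some $t$. Sending $t$ to the appropriate position in the same coinvariant space yields the residue
\[\operatorname{Res}_z\, Y(a,z)\,b\,\frac{(1+z)^{\mathrm{wt}\,a}}{z},\]
which is Zhu's product $a * b$.

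\textbf{Step 4: treat the bimodule $\fZ_{\cA}$.} The same method applies, now with $0$ and $\infty$ as genuine punctures carrying $\cA$-module structure and no pole bound. Here I would use $\fZ_{\cA} \simeq \fZ_{\cA}^+ \otimes_{\fZ_{\cA}^0} \fZ_{\cA}^-$ and the right exactness of $H_0$. Since $H_0\fZ_{\cA}^{\pm}$ should be the induced modules $V\otimes_{U_{\le 0}}\dots$, namely the generalized Verma-type quotients in which DGK define $\fA(V)$, we get
\[H_0\fZ_{\cA} \simeq H_0\fZ_{\cA}^+ \otimes_{A(V)} H_0\fZ_{\cA}^-.\]
DGK's definition of $\fA(V)$ as $\big(U/UU_{<0}\big)\otimes_{A(V)}\big(U/U_{>0}U\big)$, or its equivalent description via $\mathscr{U}$, then identifies the right-hand side with $\fA(V)$ as a vector space.

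\textbf{Main obstacle.} The main difficulty is Step 1 together with Step 2: precisely identifying which meromorphic sections the stable-configuration colimit allows, that is, the pole orders at $0$ and $\infty$ dictated by semistable bubbling. One must then show that these generate exactly $O(V)$ and not a larger or smaller subspace. My first attempt would be to filter by pole order and compare with the known spanning set of $O(V)$ given by $\operatorname{Res}\, Y(a,z)\,b\,\frac{(1+z)^{\mathrm{wt}\,a}}{z^{2+n}}$ for $n\ge 0$.
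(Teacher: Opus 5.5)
Your proposal has a genuine gap at Step 1, and that step carries the whole argument. The chain strata do not contribute trivially in degree zero. In the paper's spectral sequence (Proposition~\ref{prop:ss}, filtering by number of weight-$\epsilon$ points minus number of components), $E^1_{0,0}$ has a summand for every chain length $\ell$. Each summand is spanned by words $v^1_{[n_1]}\cdots v^\ell_{[n_\ell]}$ with one mode per component. By Lemma~\ref{lem:str_E1}, the differential on a component carrying two points is $\mu-\nu$: $\mu$ is the collision residue, and $\nu$ is the splitting of that component into two, which lands in the length-$(\ell+1)$ chain summand.

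If you discard the chains, the relation $\mu(\xi)=\nu(\xi)$ becomes $\mu(\xi)=0$, and this is fatal. Take $\xi=u\boxtimes\pmb{1}\otimes z^az'^b(z-z')^{-1}dz\,dz'$ with $a+b=\deg u-1$. Then $\mu(\xi)=u_{[\deg u-1]}$ is the zero mode of $u$. So every class would die, including the unit $\pmb{1}_{[-1]}$. In the true relation, the right-hand side is a two-component word that compensates.

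Your pole-order bound at $0,\infty$ is meant to record what survives of the chains after they are eliminated, but you give no mechanism for that elimination. You would need to show that the chain relations reduce, on the one-point stratum, to exactly $O(V)$: every word is equivalent to a single zero mode, and no relation beyond $O(V)$ is induced. That amounts to the nontrivial Frenkel--Zhu comparison $V/O(V)\simeq U(V)_0/U(V)_0^{<0}$, and filtering by pole order does not by itself supply it. Step 3 has the same defect. The product of two classes lives on the two-component stratum, and ``sending $t$ to the appropriate position'' is exactly an application of the $\nu$-relation, which is unavailable if chains contribute nothing.

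The paper never passes through $O(V)$. It identifies the cokernel of $d^1_{1,0}$ with $\fU(V)/\fI(V)$: words in modes modulo the truncated Jacobi identities, where the truncation comes from $\nu$ (the ``mod regular in $t$'' step). Restricting to $m_0=m_\infty=-1$ then gives $U(V)_0/U(V)_0^{<0}$, which is $A(V)$ in the Frenkel--Zhu form of Definition~\ref{def:Zhu}. The algebra statement is then immediate: the product on $\fZ_{\cA}^0$ is concatenation of chains, matching concatenation of words, so no derivation of Zhu's formula for $a*b$ is needed.

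For the node module, a nodal section has $m_i=-1$ for some $i$. It therefore factors as an element of the $\{m_\infty=-1\}$ subspace times an element of the $\{m_0=-1\}$ subspace, giving $\fA(V)$ via Remark~\ref{rem:subspaces}. Your Step 4 is a reasonable alternative for this last step: use the decomposition $\fZ_{\cA}\simeq\fZ_{\cA}^+\otimes_{\fZ_{\cA}^0}\fZ_{\cA}^-$ and right exactness of $H_0$, which is legitimate given the connectivity in Lemma~\ref{lem:first-quad-ss}. But it presupposes the descriptions of $H_0\fZ_{\cA}^\pm$, and those again require the chain computation you omitted.
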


The chiral Zhu algebra $\fZ_{\cA}^0$ plays a central role in the gluing formula for our extension of universal factorization algebras to nodal and punctured curves. Namely, for a universal factorization algebra $\cA$ one can define a sheaf over the moduli stack $\cM_{g}$ of smooth genus-$g$ curves, whose fiber over a smooth curve $X$ is given by the factorization homology complex $\int_X \cA$. In~\cite{Naf2} we extend this sheaf to the DM stacks $\bcM_{g,n}$. The factorization homology of a universal factorization algebra $\cA$ over a punctured curve $X \backslash \{x\}$ may be interpreted as the factorization homology of the associated factorization algebra $\cA_X$ over $X$ with coefficients in the chiral module $\fZ_{\cA}^+ \simeq \fZ_{\cA}^-$: \[\int_{X \backslash \{x\}} \cA \simeq \int_{(X;x)} (\cA_X,\fZ_{\cA}^+) \simeq \int_{(X;x)} (\cA_X,\fZ_{\cA}^-)~.\] Similarly, the factorization homology over a nodal curve $X$ with a node at $p \in X$ can be interpreted as factorization homology of the restriction to the smooth locus $\jmath : X \backslash \{p\} \into X$ with coefficients in the chiral bimodule $\fZ_{\cA}$: \[\int_X \cA \simeq \int_{(X;p)} (\jmath_*\cA_{X \backslash \{p\}},\fZ_{\cA})~.\] 

Using the $\fZ_{\cA}^0$-module structure on $\fZ_{\cA}^+ \simeq \fZ_{\cA}^-$, we obtain an induced module structure on factorization homology. We then have the following gluing formula: \[\int_{X \underset{x \sim y}{\cup} Y} \cA \simeq \int_{X \backslash \{x\}} \cA \underset{\fZ_{\cA}^0}{\otimes} \int_{Y \backslash \{y\}} \cA~.\] Thus, one may think of $\fZ_{\cA}^0$ as the algebraic model for factorization homology over a circle, in analogy with the gluing formula for topological factorization homology. 

Finally, given a family of curves $X/\hat{D}$ with a single node $p \in X_0$ over $0 \in \hat{D}$ such that the formal completion $X_x^{\land}$ is isomorphic to $\Spf k[\![x,y,t]\!]/(xy-t) \to \Spf k[\![t]\!] = \hat{D}$, we obtain a $\hat{D}$-family of factorization modules $\tilde{\fZ}_{\cA}$, which gives an $\hat{\AA}^1$ deformation of $\fZ_{\cA}$. Trivializing this deformation will then amount for a sewing formula, namely identifying the deformation of factorization homology over a smoothing family with factorization homology over the trivial deformation of the normalization. We describe the deformation $H_0\tilde{\fZ}_{\cA}$ in terms of a quotient of the trivial deformation of the universal enveloping algebra $U(V)$.

In the case where $\cA = \pmb{1}$ is the unit factorization algebra, corresponding to the vertex operator algebra $V = k$ given by the ground field, or equivalently the factorization algebra over any curve given by the dualizing sheaf, we prove vanishing of higher homologies, so that we have \[\fZ_{\pmb{1}}^0 \simeq k;\quad \tilde{\fZ}_{\pmb{1}} \simeq k[\![t]\!]~.\] (see Proposition~\ref{prop:Z-unit}). In particular, we obtain the expected result for its factorization homology: \[\int_{X \backslash {\overline{x}}} \pmb{1} \simeq k\] (see Corollary~\ref{cor:int-unit}). In general, we do not know whether higher homologies of $\fZ_{\cA}^0$ vanish.

The construction of $\fZ_{\cA}^0$ is given by integrating $\cA$ over the moduli $\bcM_{0,2,\Ran}$ of rational chains together with finite stable subsets. Namely, curves of the form \[\PP^1 \underset{\infty \sim 0}{\cup} \PP^1 \cup \dotsb \underset{\infty \sim 0}{\cup} \PP^1\] together with a finite  subset supported away from the points $0,\infty$ of each copy of $\PP^1$, and such that each component contains at least one marked point. A family of such curves may include degenerations of a given component into multiple components. The moduli of such curves can be identified with the stack $\fM_{0,2}^{\sst}$ of two-pointed semistable curves of genus zero. The isomorphism group of each rational chain with $\ell$ irreducible components is given by $\GG_m^{\ell}$, acting by rotation on each component. Thus, we may view the stack $\fM_{0,2}^{\sst}$ as the moduli of rotation-equivariant cylinders. It admits a non-unital associative multiplication given by concatenation of chains, which gives an algebraic model for the associative embedding of disjoint union of cylinders inside a cylinder. The algebra $\fZ_{\cA}^0$ may be described informally as the colimit \[\fZ_{\cA}^0 = \underset{(X,x_0,x_{\infty}) \in \fM_{0,2}^{\sst}, \;y_1,\dots,y_n \in \mathring{X}}{\colim} \cA_{y_1,\dots,y_n}\] where $\mathring{X} = X^{\sm} \backslash \{x_0,x_\infty\}$ is the smooth and punctured locus of $(X,x_0,x_{\infty})$. The multiplication on $\fZ_{\cA}^0$ is then induced from the multiplication on $\fM_{0,2}^{\sst}$, and we can view $\fZ_{\cA}^0$ as the rotation-invariant factorization homology over a cylinder with $\ell \to \infty$ irreducible components. See also~\cite{drummond-coleHomotopicallyTrivializingCircle2014,oanceaDeligneMumfordOperadTrivialization2026} for the relation between rotation invariant topological factorization algebras and the Deligne-Mumford-Knudsen compactification.

More precisely, from the data of a universal factorization algebra $\cA$ we can construct a sheaf over any finite collection of smooth points inside any curve. In particular, we get a sheaf $\Vac(\cA)_{0,2}^{\epsilon}$ over $\bcM_{0,2,\Ran}$, whose fiber over $(X,x_0,x_{\infty},y_1,\dots,y_n)$ is given by $\cA_{y_1,\dots,y_n}$. Furthermore, for a fixed pointed curve $(X,x_0,x_{\infty})$ we have a connection along the space of stable subsets $\{y_1,\dots,y_n\} \subset \mathring{X}$. We then define $\fZ_{\cA}^0$ as the complex of compactly supported sections of $\Vac(\cA)_{0,2}^{\epsilon}$ over $\bcM_{0,2,\Ran}$, which are flat along the fiber of each pointed curve. While each $\mathring{X}$ is not proper, it turns out that the collection of stable configuration of a given cardinality $n$ is a smooth and proper variety, known as the $n$-th Losev--Manin space~\cite{LM}, and is given by a sequence of blow-ups of $\PP^{n-1}$, see~\cite[Section~6]{Has}. In particular, we have a well-defined compactly supported cohomology functor.

\subsection{Notations and conventions}

Throughout this paper, $k$ refers to a fixed algebraically closed field of characteristic zero. We use the language of $(\infty,1)$-categories implicitly, and refer to them as categories. For example, $\Vect_k$ will refer to the derived category of $k$, and $\QCoh(X)$ to the derived category of $X$. We use the theory of ind-coherent sheaves over locally almost of finite type prestacks as developed in~\cite{Gai-ICS}. We use the following notations:

\begin{itemize}
    \item $\Delta$ refers to the category of finite ordered sets and order preserving morphisms. The object $[n]$ refers to the ordered set $(0 < \dotsb < n)$. $\Delta^{\surj} \subset \Delta$ is the subcategory of all objects and surjective morphisms.
    \item $\fSet$ is the category of finite sets and arbitrary maps between them. The subcategory $\fSet^{\surj} \subset \fSet$ contains all objects and surjective morphisms.
    \item $\Cat_k$ is the category of $\Vect_k$-modules inside the category of stable and presentable $\infty$-categories, and exact and continuous maps between them.
    \item For a prestack $X$, we denote $X_{\dR}(R) = X(R^{\red})$ its de Rham stack. For a map of prestacks $X \to S$, we denote $X_{S\mh\dR} \coloneqq X_{\dR} \times_{S_{\dR}} S$ the relative de Rham stack.
    \item $\hat{\AA}^1$ denotes the formal completion of the affine line at the origin $0 \in \AA^1$.
    \item For a two-pointed semistable curve $(X,x_0,x_{\infty}) \in \fM_{0,2}^{\sst}$ and a pointed curve $(Y,y)$, we denote their join by \[(X,x_0,x_{\infty}) \lor (Y,y) = (X \underset{x_{\infty}\sim y}{\cup}Y, x_0); \quad (Y,y) \lor (X,x_0,x_{\infty}) = (Y \underset{y \sim x_{0}}{\cup}X, x_{\infty})~.\]
    \item By a family of curves $X/S$ we always mean a flat and finitely presented morphism $X \to S$ of relative dimension $\leq 1$.
    \item For a curve $X$, we denote by $\Ran X$ the classifying prestack for finite subsets of its de Rham stack $X_{\dR}$. For a family of curves $X/S$, we denote by $\Ran(X/S)$ the classifying $S$-prestack for finite sets of sections of $X_{S\mh\dR} \to S$ (Definition~\ref{def:RanXS}).
    \item For a finite set $I$, $\MDisk_I$ is the moduli stack of $I$-pointed multidisks (see Definition~\ref{def:MDisk}).
    \item For integers $g,n \geq 0$, $\fM_{g,n}^{\sst}$ denotes the stack of $n$-pointed semistable curves of genus $g$, and $\bcM_{g,n}$ the stack of stable $n$-pointed genus $g$ curves.
    \item The stack $\fM_{0,2+1}^{\sst,\land}$ (resp. $\fM_{0,1+2}^{\sst,\land}$, resp. $\fM_{0,2+2}^{\sst,\land}$) denotes the deformation of $\fM_{0,2}^{\sst}$ where we allow another formal deformation of a node at the first (resp. second, resp. both) marked point, see Definitions~\ref{def:punct-modif-stack} and~\ref{def:node-modif-stack}. 
    \item The prestack $\bcM_{0,2,\Ran}$ classifies stable configurations of weight-$\epsilon$ marked points over $\fM_{0,2}^{\sst}$. The prestacks $\bcM_{0,2+1,\Ran}^{\land}$, $\bcM_{0,1+2,\Ran}^{\land}$, and $\bcM_{0,2+2,\Ran}^{\land}$ are given by its formal deformations allowing a formal node at the first, second, and both marked points respectively (see Definition~\ref{def:M02plus1Ran}).
    \item For an integer $\ell > 0$, we denote by $\fA[\ell]$ the quotient stack $\AA^{\ell}/\GG_m^{\ell+1}$ (see Definition~\ref{def:A[l]}).
    \item Given a universal factorization algebra $\cA$, we write $\fZ_{\cA}^0$ for the chiral Zhu algebra, $\fZ_{\cA}$ for its node module, $\fZ_{\cA}^+$ and $\fZ_{\cA}^-$ for its right and left puncture modules, and $\tilde{\fZ}_{\cA}$ for its smoothing module (see Definition~\ref{def:smoothing-module}).
\end{itemize}

\subsection{Overview}

In Section~\ref{sec:VA-FA}, we recall the definitions of (families of) factorization algebras, vertex operator algebras, and universal factorization algebras. We review the definition of the Zhu associative algebra $A(V)$ and mode transition algebra $\fA(V)$ associated to a vertex algebra $V$, both constructed from the universal enveloping algebra $U(V)$. 

In Section~\ref{sec:ZA} we describe the construction of the chiral Zhu algebra $\fZ_{\cA}^0$ associated to a universal factorization algebra $\cA$, as well as the puncture $\cA$-modules $\fZ_{\cA}^+ \simeq \fZ_{\cA}^-$, their nodal gluing, given by the  node module $\fZ_{\cA}$, and its deformation $\tilde{\fZ}_{\cA}$, the smoothing module. This is done using integration over various spaces of semistable modifications, which we review in~\ref{sec:ss-modif}. We then compute the above objects in the case of the unit factorization algebra, given by the dualizing sheaf.

In Section~\ref{sec:ss}, we construct a spectral sequence for computing $\tilde{\fZ}_{\cA}$ and $\fZ_{\cA}^0$, using a filtration on the Ran space of node-modifications $\bcM_{0,2+2,\Ran}^{\land}$ given by a combination of the cardinality and number of components filtrations. Using this spectral sequence, we identify $H_0\fZ_{V}^0$ with $A(V)$, and $H_0\fZ_{\cA}$ with $\fA(V)$, for any vertex operator algebra $V$.

\subsection{Acknowledgements}

I am deeply grateful to my advisor, John Francis, for his constant support, guidance, and insight throughout the writing of this paper.

I would like to thank Chiara Damiolini for explaining the mode-transition algebra and its role in the Verlinde formula, and Alexander Beilinson for his thoughtful questions, corrections, and valuable comments.

I am also grateful to André Henriques, Riemundo Heluani, Owen Gwilliam, Dmitry Tamarkin, Yuchen Liu, and Aleksander Karapetyan for many helpful discussions and for generously sharing their ideas with me.

\section{Vertex and factorization algebras} \label{sec:VA-FA}

\subsection{Factorization structures}

In this section we recall the definition of factorization algebras and modules over smooth families of curves, following~\cite{BD,FG}.

\begin{definition} \label{def:RanXS}
    Given a family of smooth curves $X$ over an algebraic stack $S$, define the relative Ran space to be the $S$-prestack \[\Ran(X/S) : (\Spec R \to S) \mapsto \{\text{finite nonempty subsets of }\hom_{S}(\Spec R^{\red}, X)\}~.\] 
\end{definition}

Define the disjoint locus \[\jmath : (\Ran(X/S) \times_S \Ran(X/S))_{\disj} \subset \Ran(X/S) \times_S \Ran(X/S)\] to be the open sub-prestack classifying pairs of subsets with disjoint graphs. We have a union map \[\cup : \Ran(X/S) \times_S \Ran(X/S) \to \Ran(X/S)\] which restricts to a disjoint union map \[\sqcup : (\Ran(X/S) \times_S \Ran(X/S))_{\disj} \to \Ran(X/S)~.\] The union map defines a (non-unital) symmetric monoidal structure in $S$-prestacks \[(\Ran(X/S),\cup) \in \Comm(\PreSt_S, \times_S)~,\] while the disjoint union defines a (non-unital) symmetric monoidal structure in the correspondence category of $S$-prestacks \[(\Ran(X/S),(\jmath,\sqcup)) \in \Comm(\PreSt_S^{\corr},\times_S)~,\] with multiplication given by the correspondence \[\Ran(X/S) \times_S \Ran(X/S) \xleftarrow{\jmath} (\Ran(X/S) \times_S \Ran(X/S))_{\disj} \xrightarrow{\sqcup} \Ran(X/S)~.\] Assuming $S$ is of finite type, all prestacks involved are locally almost of finite type (laft). We can therefore use the lax symmetric monoidal functor \[\IndCoh : \PreSt_S^{\operatorname{laft},\corr} \to \Cat_S = \IndCoh(S)\mh\Mod(\Cat_k)\] to obtain a commutative algebra structure on $\IndCoh(\Ran(X/S)) \in \Cat_S$.

\begin{definition}
    Define the chiral symmetric monoidal structure to be the resulted commutative algebra object in $\Cat_S$ \[\IndCoh^{\ch}(\Ran(X/S)) = \IndCoh(\Ran(X/S),(\jmath,\sqcup))~.\] Explicitly, the tensor product is given by \[\otimes^{ch}_{i \in I} \cF_i = \sqcup_*\jmath^!\otimes_S^!\cF_i~.\]
\end{definition}

\begin{definition}
    Define the category $\Fact(X/S)$ of $S$-families of factorization algebras over $X$ to be the full subcategory \[\Fact(X/S) \subset \coComm(\IndCoh^{\ch}(\Ran(X/S)))\] spanned by coalgebras $\cA_{X/S}$ such that the maps \[\sqcup^!\cA_{X/S} \to \jmath^!\cA_{X/S}^{\otimes_S^! n}\] adjoint to the comultiplication maps \[\cA_{X/S} \to \sqcup_*\jmath^!\cA_{X/S}^{\otimes_S^! n}\] are isomorphisms for each $n > 0$.
\end{definition}

Given a smooth family $X/S$, a module over $\Ran(X/S)$ is an $S$-prestack $\cZ$ and a commutative action map \[\cZ \times_S \Ran(X/S) \xleftarrow{\jmath_{\cZ}} \Act_{\cZ} \xrightarrow{\sqcup_{\cZ}} \cZ\] for some prestack $\Act_{\cZ}$, compatible with the multiplication correspondence for $\Ran(X/S)$. We get an induced $\IndCoh(\Ran(X/S))$-module structure on $\IndCoh(\cZ)$, and therefore a notion of $\cA_{X/S}$-comodules in $\IndCoh(\cZ)$ for any factorization algebra $\cA_{X/S}$ over $X/S$.

\begin{definition}
    An object $\cM \in \cA\mh\coMod(\IndCoh(\cZ))$ is called a factorization $\cA$-module over $\cZ$ if $\sqcup_{\cZ,*}$ is right adjoint to $\sqcup_{\cZ}^!$, and the maps \[\sqcup_{\cZ}^!\cM \to \jmath_{\cZ}^!(\cM \otimes_{S}^! \cA^{\otimes_S^! n})\] adjoint to the coaction maps are isomorphisms for any $n > 0$. We denote the resulted category by $\cA\mh\FactMod(\cZ)$.
\end{definition}

\begin{example}
    Given a point $\overline{x} \in \Ran(X)(k)$, let $\cZ = \Ran_{\overline{x}}X$ the pointed Ran space, classifying finite subsets of $X$ that contains $\overline{x}$. Then $\Ran_{\overline{x}}X$ is naturally a factorization module space over $\Ran X$ with respect to the (disjoint) union map. We denote the resulted category by \[\cA\mh\FactMod_{\overline{x}} = \cA\mh\FactMod(\Ran_{\overline{x}}X)~.\] This is the category of factorization modules supported at $\overline{x}$.
\end{example}

Factorization algebras and modules admit a dual description in terms of chiral algebras and modules, a certain subcategory of Lie algebra objects in $\IndCoh^{\ch}(\Ran(X/S))$. The equivalence is given via the Koszul duality between commutative coalgebras and Lie algebras~\cite{FG}. In particular, given a factorization module supported at $\overline{x} \in \Ran X$, its fiber at $\overline{x}$ admits a structure of a chiral module supported at $\overline{x}$.

The global counterpart of a factorization algebra is given by its factorization homology:
\begin{definition} \label{def:fact-homology}
    For a smooth and proper family of curves $X/S$ and a factorization algebra $\cA_{X/S} \in \Fact(X/S)$, define the factorization homology of $\cA_{X/S}$ over $X/S$ to be the sheaf over $S$ defined by \[\int_{X/S} \cA \coloneqq p_!\cA \in \IndCoh(S)\] where $p : \Ran(X/S) \to S$ is the projection. More generally, for $\overline{x} \in \Ran(X/S)$ and $\cM \in \cA\mh\FactMod_{\overline{x}}$, define the factorization homology of $\cA_{X/S}$ over $X/S$ with coefficients in the chiral module $\cM_{\overline{x}}$ at $\overline{x}$ to be the sheaf over $S$ defined by $S$ \[\int_{(X/S,\overline{x})}(\cA;\cM_{\overline{x}}) \coloneqq p_{\overline{x},!}\cM \in \IndCoh(S)\] where $p_{\overline{x}} : \Ran_{\overline{x}}(X/S) \to S$ is the projection.
\end{definition}

\subsection{Vertex operator algebras} \label{subsec:VOA}

\begin{definition} \label{def:vertex-algebra}
    A vertex algebra consists of the data of:
    \begin{enumerate}
        \item A $k$-vector space $V$.
        \item A $k$-derivation $T : V \to V$.
        \item A vacuum vector $\pmb{1} \in V$.
        \item A map \[Y : V \to \operatorname{End}(V)[\![z^{\pm 1}]\!]\] sending a vector $v \in V$ to the series $v(z) \eqqcolon \sum_{n \in \ZZ} v_{(n)}z^{-n-1}$ for $v_{(n)} \in \operatorname{End}(V)$.
    \end{enumerate}
    such that the following conditions hold:
    \begin{enumerate}
        \item $\pmb{1}(z) = \id_V$ and $v(z)\pmb{1} \in v + zV[\![z]\!]$ for all $v \in V$.
        \item $u(z)v \in V((z))$ for all $u,v \in V$.
        \item For all $a,b,c \in \ZZ$ and all $u,v \in V$, \[\sum_{i \geq 0} (-1)^i\binom{c}{i}(u_{(a+c-i)}v_{(b+i)} - (-1)^{c}v_{(b+c-i)}u_{(a+i)}) = \sum_{i \geq 0}\binom{a}{i}(u_{(c+i)}v)_{(a+b-i)}\]
    \end{enumerate}
\end{definition}

Condition (3) is the vertex algebra analog of the Jacobi identity for Lie algebras.

\begin{definition}
    A vertex operator algebra (VOA) is a vertex algebra $(V,\pmb{1},T,Y)$, together with a $\ZZ$-grading on $V$ with $\dim V_n < \infty$, $V_n = 0$ for $n \ll 0$, and a conformal vector $\omega \in V$, such that the following identities holds: Let $L_n^V \coloneqq \omega_{(n+1)}$. Then
    \begin{enumerate}
        \item \[[L_m^V,L_n^V] = (m-n)L_{m+n}^V + \delta_{m+n,0}\frac{m^3-m}{12}c\] for some constant $c \in \CC$ called the \textit{central charge}.
        \item $L_0^V \mid V_n = n\id$.
        \item $(L_{-1}^Vu)(z) = \frac{d}{dz}u(z)$ for all $u \in V$.
    \end{enumerate} 
\end{definition}

Condition (1) ensures that the operators $\omega_{(n+1)}$ satisfies the commutation relations corresponding to the generators $L_n = -z^{n+1}\partial_z$ of the Virasoro algebra $\operatorname{Vir}$. 

\begin{example} \label{ex:KM_VOA}
    Let $\fg$ be a simple Lie algebra and $\kappa$ an invariant non-degenerate bilinear form. Then one can form the affine Kac-Moody algebra $\hat{\fg}_{\kappa}$, an infinite dimensional Lie algebra given by a central extension $\fg((t)) \oplus k \cdot \pmb{c}$ of the loop algebra $\fg((t))$, with the Lie bracket being determined by $\kappa$. To $\hat{\fg}_{\kappa}$ one can associate a module \[\VV_{\fg,\kappa} \coloneqq \operatorname{Ind}_{\fg[\![t]\!] \oplus k \cdot \pmb{c}}^{\hat{\fg}_{\kappa}} k\] Then $\VV_{\fg,\kappa}$ admits a structure of a vertex operator algebra, with \[(X \otimes t^{-1})(z) = \sum_{n \in \ZZ} (X \otimes t^{n})z^{-n-1}\] for an element of the form $X \otimes t^{-1} \in t^{-1}\fg \subset \VV_{\fg,\kappa}$. For a general element, the field $v(z)$ can be obtained by successive application of normally ordered product, see~\cite{FBZ}. The conformal vector is given by the Segal-Sugawara construction.
\end{example}

\begin{definition}
    A weak module for a VOA $V$ is a vector space $M$ together with a linear map \[Y^M : V \to \operatorname{End}M[\![z^{\pm 1}]\!]\] sending $v \in V$ to the field $v^M(z) = \sum_{n \in \ZZ} v^M_{(n)}z^{-n-1}$, and satisfying conditions analogous to those in Definition~\ref{def:vertex-algebra}. A $V$-module is a weak module $M$ together with a $\ZZ_{\geq 0}$ grading $M = \bigoplus_{n \geq 0}M_n$ satisfying \[v_{(n)}^M M_m \subset M_{m+\deg(v)-n-1}\] for all homogeneous $v \in V_{\deg v}$.
\end{definition}

\begin{example} \label{ex:KM-modules}
    For $V = \VV_{\fg,\kappa}$ as in Example~\ref{ex:KM_VOA}, the data of a $V$-module is equivalent to that of a smooth $\hat{\fg}_{\kappa}$-module, or equivalently a discrete $\hat{U}_{\kappa}(\hat{\fg})$-module, where $\hat{U}_{\kappa}(\hat{\fg})$ is the quotient of the completed universal enveloping algebra $\hat{U}(\hat{\fg}_{\kappa})$ by the ideal $(1 - \pmb{c})$.
\end{example}

More generally, for any VOA $V$, the category of $V$-modules is equivalent to that of $U(V)$-modules, where $U(V)$ is a topological associative algebra defined as follows:

\begin{definition} \label{def:UV}
    For a VOA $V$, let $\fL(V)$ be the Lie algebra whose underlying vector space is given by $\Gamma_{\dR}(\GG_m, V \otimes \omega_{\GG_m})$ with respect to the connection $\partial_z = T \otimes \id + \id \otimes \frac{d}{dz}$, so that it is generated by $v_{[n]} \coloneqq v \otimes z^ndz$, and subject to the relations \[\partial_z v_{[n]} = (Tv)_{[n]} + nv_{[n-1]} = 0~.\] The Lie bracket is defined by \[[u_{[m]},v_{[n]}] = \sum_{i \geq 0} \binom{m}{i}(u_{(i)}v)_{[m+n-i]}~.\] It admits a grading given by \[\deg(v_{[n]}) = \deg(v)-n-1\] Let $U(\fL(V))$ be the completion of its universal enveloping algebra with respect to the filtration \[U(\fL(V))^{\leq k} = \bigoplus_{n \in \ZZ} \bigoplus_{i \leq k} U(\fL(V))_{n-i} \cdot U(\fL(V))_{i} = U(\fL(V)) \cdot U(\fL(V))_{\leq k}\] where $U(\fL(V)) = \bigoplus_{n \in \ZZ} U(\fL(V))_n$ is the grading induced by the grading on $\fL(V)$. Finally, let $U(V)$ be the quotient of $U(\fL(V))$ by the relations \[\pmb{1}_{[i]} = \delta_{i,-1}\] and the Jacobi identity \begin{equation} \label{eq:Jac_id}
        \sum_{i \geq 0} (-1)^i\binom{c}{i}(u_{[a+c-i]}v_{[b+i]} - (-1)^{c}v_{[b+c-i]}u_{[a+i]}) = \sum_{i \geq 0}\binom{a}{i}(u_{(c+i)}v)_{[a+b-i]}
    \end{equation} for all $a,b,c \in \ZZ$ and $u,v \in V$.
\end{definition}

The grading on $U(\fL(V))$ induces a grading on $U(V)$ \begin{equation} \label{eq:UV-grading}
    U(V) \simeq \bigoplus_{n \in \ZZ} U(V)_n~.
\end{equation} In particular, $U(V)_0 \subset U(V)$ is a subalgebra. It admits a filtration \[\dotsb \subset U(V)_0^{\leq k} \subset U(V)_0^{\leq k+1} \subset \dotsb\] defined by \begin{equation} \label{eq:UV-filt}
    U(V)_0^{\leq k} = \bigoplus_{i \leq k} U(V)_{-i}\hat{U}(V)_i = (U(V) \cdot U(V)_{\leq k})_0
\end{equation}

\begin{definition} \label{def:Zhu}
    Define the Zhu algebra associated to $V$ to be the quotient \[A(V) = U(V)_0 / U(V)_0^{< 0}\]
\end{definition}

\begin{remark}
    The above definition is the one proven in~\cite{FZ}. The original definition in~\cite{Zhu} is given by a certain quotient of $V$.
\end{remark}

Using the algebra $A(V)$, Damiolini, Gibney, and Krashen~\cite{DGK} defined a larger associative algebra, which plays the role of the ``value attached to a node'' in their extension to the boundary of the vertex algebra bundle. We recall here the definition of its underlying $U(V)$-bimodule:
\begin{definition} \label{def:MTA}
    The mode transition bimodule $\fA(V)$ associated to a vertex algebra $V$ is given by the tensor product \[\fA(V) = (U(V)/U(V)U(V)_{< 0}) \otimes_{U(V)_0} A(V) \otimes_{U(V)_0} (U(V)/U(V)_{>0}U(V))~.\]
\end{definition}

This bimodule structure then extends to an associative algebra structure on $\fA(V)$ extending that of $A(V)$, which we will not use in this paper.

\subsection{Universal factorization algebras}

Vertex algebras describe the local structure of factorization algebras. Furthermore, given an $\operatorname{Aut}\hat{\cO}$-equivariant vertex algebra $V$, Frenkel and Ben-Zvi~\cite{FBZ} described a construction of a factorization algebra over any smooth curve, having $V$ as its local structure over each point. Factorization algebras constructed this way are known as \textit{universal factorization algebras} --- they are defined over any smooth curve, and compatible with \'{e}tale pullbacks. In this section we recall the definition given in~\cite{Lur, Naf2} of universal factorization algebras as a derived generalization of the notion of equivariant vertex algebras.

\begin{definition} \label{def:MDisk}
    For a finite set $I$, let $\MDisk_I$ be the stack of $I$-pointed multidisks, whose $R$-points are given by formal $R$-schemes $\cX$, \'{e}tale locally isomorphic to the formal completion of a finite and flat $R$-divisor in $\AA^1_R$, together with a surjective $R$-morphism \[(x_i)_{i \in I} : (\Spec R^{\red})^{\sqcup I} \to \cX~.\] For a surjective map $\alpha : I \onto J$, we have a diagonal map \[\Delta_{(\alpha)} : \MDisk_J \to \MDisk_I\] defined by \[(\cX, (x_j)_{j \in J}) \mapsto (\cX, (x_{\alpha(i)})_{i \in I})~.\] By taking colimit over all finite sets and surjection, we get the Ran space of multidisks \[\MDisk_{\Ran} \coloneqq \colim_{\fSet^{\surj,\op}} \MDisk_I~.\]
\end{definition}

Taking $I = \{\pt\}$ produces the classifying stack $B\operatorname{Aut}\hat{\cO}$, classifying formal neighborhoods of smooth points inside curves. In general, however, the stack $\MDisk_I$ is bigger than the product $B\operatorname{Aut}\hat{\cO}^I$, and contains formal completion around $I$ $R$-divisors that intersects over subsets of $\Spec R$.

In~\cite{Naf2}, we defined a sheaf of categories \[\IndCoh^!_{\ren}(\MDisk_{\Ran}) = \lim_{\fSet^{\surj}} \IndCoh^!_{\ren}(\MDisk_I)\] of renormalized ind-coherent sheaves, together with $!$-pullback along arbitrary maps.

We have a disjoint union operation \[\sqcup : \MDisk_I \times \MDisk_J \to \MDisk_{I \sqcup J}~,\] which induces a commutative monoid structure \[\sqcup : \MDisk_{\Ran} \times \MDisk_{\Ran} \to \MDisk_{\Ran}~.\]

\begin{definition}
    A universal factorization algebra is an object \[\cA \in \IndCoh^!_{\ren}(\MDisk_{\Ran})\] together with commutative isomorphisms \[\sqcup^!\cA \iso \cA^{\sqcup I}\] for each finite set $I$. We denote the resulted category by $\Fact^{\univ}$.
\end{definition}

Given a family of smooth curves $X$ over a finite type algebraic stack $S$, we have a map \[\pi_{X/S} : \Ran(X/S) \to \MDisk_{\Ran}\] sending a finite set to its formal neighborhood. More precisely, given $f : \Spec R \to S$ and $x_i : \Spec R^{\red} \to X; i \in I$, we define \[\pi_{X/S}(f,\{x_i\}) = (X_R \times_{X_{R\mh\dR}} (\Spec R^{\red})^{\sqcup I}, \{x_i\})~,\] where \[X_{R\mh\dR} = X_{\dR} \times_{\Spec R_{\dR}} \Spec R\] is the relative de Rham stack. In~\cite[Theorem~2.42]{Naf2} we prove the following:

\begin{theorem} \label{thm:loc-UFA}
    The map $\pi_{X/S}$ induces a map \[\pi_{X/S}^{!,\fact} : \Fact^{\univ} \to \Fact(X/S)~.\]
\end{theorem}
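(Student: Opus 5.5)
The map will be built by pulling back along $\pi_{X/S}$ and then checking that the factorization property survives. Given $\cA \in \Fact^{\univ}$, set $\cA_{X/S} \coloneqq \pi_{X/S}^!\cA \in \IndCoh(\Ran(X/S))$, using that the renormalized sheaf of categories $\IndCoh^!_{\ren}(\MDisk_{\Ran})$ admits $!$-pullback along arbitrary maps. Composing with the forgetful functor from $\IndCoh_{\ren}$ lands us in $\IndCoh(\Ran(X/S))$. The task is then to equip $\cA_{X/S}$ with a cocommutative coalgebra structure for the chiral monoidal structure, and to check that the adjoint maps $\sqcup^!\cA_{X/S} \to \jmath^!\cA_{X/S}^{\otimes_S^! n}$ are isomorphisms.

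The key geometric input is a commutative diagram relating disjoint union on $\Ran(X/S)$ with disjoint union on $\MDisk_{\Ran}$. On the disjoint locus, the formal neighborhood of a union of two subsets with disjoint graphs is canonically the disjoint union of their formal neighborhoods. This gives a commutative square
\[(\Ran(X/S)\times_S\Ran(X/S))_{\disj} \xrightarrow{\sqcup} \Ran(X/S) \xrightarrow{\pi_{X/S}} \MDisk_{\Ran},\]
which agrees with $\sqcup\circ(\pi_{X/S}\times\pi_{X/S})\circ\jmath$. I would verify this on $R$-points. For $x_I,y_J$ with disjoint graphs, the completion $X_R\times_{X_{R\mh\dR}}(\Spec R^{\red})^{\sqcup I\sqcup J}$ splits as a disjoint union, because the closed subsets are disjoint on $\Spec R^{\red}$ and completion only sees the reduced locus. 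The same holds for $n$-fold products, compatibly with the symmetric group actions and the associativity constraints, so we obtain a map of commutative monoids in correspondences from $(\Ran(X/S),(\jmath,\sqcup))$ to $(\MDisk_{\Ran},\sqcup)$, viewed as a correspondence monoid with identity left leg.

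The coalgebra structure then comes from pulling back the factorization isomorphism. Applying $(\pi^n\circ\jmath)^!$ to $\sqcup^!\cA\simeq\cA^{\boxtimes n}$ gives
\[\sqcup^!\pi_{X/S}^!\cA \simeq \jmath^!(\pi_{X/S}^!\cA)^{\otimes_S^! n}.\]
Here one uses that $!$-pullback is symmetric monoidal for external products relative to $S$, so that $(\pi\times_S\pi)^!(\cA\boxtimes\cA)\simeq\pi^!\cA\otimes_S^!\pi^!\cA$. Passing through the $(\sqcup^!,\sqcup_*)$ adjunction (with $\sqcup$ ind-proper on $\Ran$) yields the comultiplication $\cA_{X/S}\to\sqcup_*\jmath^!\cA_{X/S}^{\otimes n}$. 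By construction, the adjoint maps are the isomorphisms above, so the factorization condition holds automatically. Functoriality in $\cA$ is clear because everything is natural.

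The main obstacle is producing the full $E_\infty$-coalgebra structure coherently in $\IndCoh^{\ch}$, not just the individual isomorphisms. My plan is to package the correspondence-monoid map above into a lax symmetric monoidal functor
\[\pi^!:\IndCoh^!_{\ren}(\MDisk_{\Ran},\sqcup)\to\IndCoh^{\ch}(\Ran(X/S)).\]
This uses the functoriality of $\IndCoh$ on correspondences, with base change between $\jmath^!$ and $\sqcup_*$. The delicate point is compatibility between renormalized and ordinary $\IndCoh$ under $!$-pullback, together with the needed Künneth property relative to $S$. I would handle this by first restricting to each $\Ran^I$ and $\MDisk_I$, where finite type and laft hypotheses apply, and then passing to the limit over $\fSet^{\surj}$.
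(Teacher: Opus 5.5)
The paper does not prove this statement; it is quoted from \cite[Theorem~2.42]{Naf2}, so there is no in-text argument to compare with, and your proposal has to stand on its own. Your strategy is the natural one: set $\cA_{X/S}=\pi_{X/S}^!\cA$ and pull the factorization isomorphisms back along $\pi_{X/S}\circ\sqcup\simeq\sqcup\circ(\pi_{X/S}\times\pi_{X/S})\circ\jmath$ on the disjoint locus. The geometric input is also right. For sections with disjoint graphs, the completion of $X_R$ along the union of the graphs is canonically the disjoint union of the two completions. Since this splitting is canonical, it is compatible with permutations and associativity.

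However, your step turning $\sqcup^!\cA_{X/S}\simeq\jmath^!\cA_{X/S}^{\otimes_S^! n}$ into a comultiplication $\cA_{X/S}\to\sqcup_*\jmath^!\cA_{X/S}^{\otimes_S^! n}$ is misjustified. It uses the adjunction $\sqcup^!\dashv\sqcup_*$. This holds because $\sqcup\colon(\Ran(X/S)\times_S\Ran(X/S))_{\disj}\to\Ran(X/S)$ is \'{e}tale. That map is the pseudo-proper $\cup$ precomposed with the open embedding $\jmath$, and it is \emph{not} ind-proper. Ind-properness would give the opposite adjunction $\sqcup_*\dashv\sqcup^!$. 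Under that adjunction your isomorphism would correspond to a map $\sqcup_*\jmath^!\cA_{X/S}^{\otimes_S^! n}\to\cA_{X/S}$, which is a multiplication rather than a comultiplication.

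The more serious gap is the coherence step, which is essentially the whole content once the square is in place; as you set it up, it is not well-posed.

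\begin{itemize}
\item The square does not give a map of commutative monoids in correspondences. The composite of $(\jmath,\sqcup)$ with $\pi_{X/S}$ has apex $(\Ran(X/S)\times_S\Ran(X/S))_{\disj}$. The composite of $\pi_{X/S}\times\pi_{X/S}$ with $\sqcup$ has apex $\Ran(X/S)\times_S\Ran(X/S)$. So the two are related only by the 2-cell $\jmath$, not by an equivalence.
\item The paper's framework equips $\IndCoh^!_{\ren}(\MDisk_{\Ran})$ only with $!$-pullbacks, and defines $\Fact^{\univ}$ by $!$-pullback isomorphisms rather than as coalgebras. So there is no monoidal structure on the source for $\pi_{X/S}^!$ to be compatible with. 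Such a structure would need $*$-pushforward along $\sqcup$ on $\MDisk_{\Ran}$, which the framework does not supply.
\item In any case, a functor carrying coalgebras to coalgebras must be oplax monoidal, not lax.
\end{itemize}

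The way around this is to stay on the $!$-side until the very end.

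\begin{enumerate}
\item View $\Fact^{\univ}$ as coherent $!$-factorization data over the diagram $I\mapsto\MDisk_{\Ran}^{I}$, with partial disjoint unions along surjections.
\item Observe that $\pi_{X/S}$, together with the canonical splittings, is a map of diagrams from $I\mapsto(\Ran(X/S)^{\times_S I})_{\disj}$. Then $!$-pullback is coherent for free. You get the needed compatibility from the naturality of $\boxtimes$ under $!$-pullback, followed by restriction along $\Ran(X/S)^{\times_S I}\to\Ran(X/S)^{\times I}$ to obtain $\otimes_S^!$.
\item Only then, on the laft side, use the \'{e}tale adjunction $\sqcup^!\dashv\sqcup_*$. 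This passes from $!$-factorization data on $\Ran(X/S)$ to objects of $\coComm(\IndCoh^{\ch}(\Ran(X/S)))$ satisfying the isomorphism condition.
\end{enumerate}
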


We denote by \[\cA_{X/S} \coloneqq \pi_{X/S}^{!,\fact}\cA\] the factorization algebra over $X/S$ corresponding to $\cA$.

\section{Factorization modules over nodes and punctures} \label{sec:ZA}

In this section we recall the construction of factorization modules associated to a universal factorization algebra $\cA$, supported either at a node or at a puncture, modelled as a weight-one marked point in $\bcM_{g,n}$. They are all built from modules over the chiral Zhu algebra $\fZ_{\cA}^0$, a (non-unital) associative algebra associated to $\cA$.

\subsection{Stacks of semistable modifications} \label{sec:ss-modif}

Given a curve $X$ and a point $x \in X$, a semistable modification at $x$ is a curve $X'$ and a map $X' \to X$ which is an isomorphism away from $x$, and its fiber over $x$ is given by a rational chain $\PP^1 \lor \dotsb \lor \PP^1$. We will have two classes of such modifications: The first is when $x$ is a smooth marked point, which we view as a puncture, and the second is when $x$ is a node. To define these modifications in families, we use the moduli $\fM_{g,n}^{\sst}$ of semistable $n$-pointed genus $g$ curves.

\begin{definition} \label{def:ss-modif}
    For $g,n \geq 0$ such that $2g+n-2 > 0$, let \[\fM_{g,n}^{\sst} \to \bcM_{g,n}\] be the stable reduction map. For $(X,\overline{x}) \in \bcM_{g,n}(R)$, let the stack of semistable modifications of $(X,\overline{x})$ be the fiber product \[\fM_{(X,\overline{x})}^{\sst} \coloneqq \fM_{g,n}^{\sst} \times_{\bcM_{g,n}} \Spec R\] along the map $\Spec R \to \bcM_{g,n}$ classifying $(X,\overline{x})$.
\end{definition}

As was observed in~\cite{ACFW,graberRelativeVirtualLocalization2005}, the local structure of these spaces is independent of the pointed curve, and can be modelled by the stack $\fM_{0,2}^{\sst}$ and its variants, which we define below.

\begin{definition} \label{def:punct-modif-stack}
    Let $\fM_{0,2+1}^{\sst}, \fM_{0,1+2}^{\sst} \subset \fM_{0,3}^{\sst}$ be the open substacks where the first (resp. last) two marked points are supported on the same irreducible component. The right (resp. left) puncture-modification stack $\fM_{0,2+1}^{\sst,\land}$ (resp. $\fM_{0,1+2}^{\sst,\land}$) is the formal completion of $\fM_{0,2+1}^{\sst}$ (resp. $\fM_{0,1+2}^{\sst}$) along the closed embedding $(\PP^1,0,1,\infty) \lor (-) : \fM_{0,2}^{\sst} \to \fM_{0,2+1}^{\sst}$ (resp. $(-) \lor (\PP^1,0,1,\infty) : \fM_{0,2}^{\sst} \to \fM_{0,1+2}^{\sst}$). 
\end{definition}

\begin{definition} \label{def:node-modif-stack}
    Let $\fM_{0,2+2}^{\sst} \subset \fM_{0,4}^{\sst}$ be the open substack where the first and last two marked points are supported on the same irreducible component. The node-modification stack $\fM_{0,2+2}^{\sst,\land}$ is the formal completions of $\fM_{0,2+2}^{\sst}$ along the closed embeddings \[(\PP^1,0,1,\infty) \lor (-) \lor (\PP^1,0,1,\infty) : \fM_{0,2}^{\sst} \to \fM_{0,2+2}^{\sst}~.\] The strictly nodal closed substack $\fM_{0,2+2}^{\sst,\land,\nd} \subset \fM_{0,2+2}^{\sst,\land}$ is the fiber product \[\fM_{0,2+2}^{\sst,\land,\nd} \coloneqq \fM_{0,2+2}^{\sst,\land} \times_{\AA^1} \{0\}~,\] where the map \[\fM_{0,2+2}^{\sst} \to \bcM_{0,2+2} \simeq \AA^1 \subset \PP^1 \simeq \bcM_{0,4}\] is the stable reduction map, and $\{0\}$ is the nodal locus.
\end{definition}

Over each $\fM_{g,n}^{\sst}$, one has a universal curve $\fX_{g,n}^{\sst}$. In particular, one gets the relative Ran spaces $\Ran(\fX_{g,n}^{\sst}/\fM_{g,n}^{\sst})$. We want to restrict to a subspace which contains only smooth subsets. Furthermore, in order to get a pseudo-proper prestack, we need to restrict to stable configurations, defined as follows:

\begin{definition} \label{def:MgnRan}
    For a finite set $I$, let $\bcM_{g,n,I}$ be the moduli of genus $g$ curves $X$ together with $n + |I|$ marked points $x_1,\dots,x_n,y_1,\dots,y_{|I|}$, such that the weighted pointed curve \[(X,1 \cdot (x_1 + \dotsb + x_n) + \frac{1}{|I|+1} \cdot (y_1 + \dotsb + y_{|I|}))\] is stable in the sense of~\cite{Has}. Define \[\bcM_{g,n,\Ran} \coloneqq \colim_{I \in \fSet^{\surj,\op}} (\bcM_{g,n,I})_{\bcM_{g,n}\mh\dR}\] where the colimit is taken with respect to diagonal embeddings.
\end{definition}

The spaces $\bcM_{g,n,I}$ are smooth and proper schemes, and so their colimit is pseudo-proper. By restriction, we get a space $\bcM_{0,2+1,\Ran}$.

\begin{definition} \label{def:M02plus1Ran}
    The Ran space of right puncture-modification is the sub-prestack \[\bcM_{0,2+1,\Ran}^{\land} \subset \bcM_{0,2+1,\Ran}\] given by the formal completion along the closed embedding \[(\PP^1,0,1,\infty) \lor (-) : \bcM_{0,2,\Ran} \to \bcM_{0,2+1,\Ran}~.\] The Ran space of left puncture-modification $\bcM_{0,1+2,\Ran}^{\land}$, the node-modification Ran space $\bcM_{0,2+2}^{\land}$, and its strictly nodal subspace $\bcM_{0,2+2,\Ran}^{\land,\nd}$ are defined similarly, as in Definitions~\ref{def:punct-modif-stack} and~\ref{def:node-modif-stack}.
\end{definition} 

The projection \[\bcM_{0,2,\Ran} \to \fM_{0,2}^{\sst}\] can be upgraded to a map of non-unital associative algebras, with the binary operation in both cases given by gluing along marked points \[(X,x_0,x_{\infty}) \lor (Y,y_0,y_{\infty}) = (X \underset{x_{\infty} \sim y_0}{\cup} Y, x_0, y_{\infty})~.\]

Any semistable modification at a point is given by inserting a rational chain, and so the spaces $\fM_{g,n}^{\sst}, \bcM_{g,n,\Ran}$ are modelled locally by the spaces of semistable modifications for genus zero curves. More precisely, we have the following results (see\cite[Theorem~1.3.2]{ACFW} and~\cite[Proposition~3.23, 3.26]{Naf2}):
\begin{proposition} \label{prop:local-str-ss-modif}
    \begin{enumerate}
        \item For a smooth pointed curve $(X,x) \in \cM_{g,1}(k)$ there is an isomorphism \[\fM_{(X,x)}^{\sst} \simeq \fM_{0,2+1}^{\sst} \simeq \fM_{0,1+2}^{\sst}\] and a Cartesian square \[\begin{tikzcd} {\bcM_{0,2+1,\Ran}^{\land}} & {\bcM_{(X,x),\Ran}} \\
        {\{x\}} & {\Ran X}
        \arrow[from=1-1, to=1-2]
        \arrow[from=1-1, to=2-1]
        \arrow[from=1-2, to=2-2]
        \arrow[from=2-1, to=2-2]
        \end{tikzcd}\]
        \item For a family of smooth curves $X/B$ over a smooth curve $B$ with a single node $q \in X_b$ over a closed point $b \in B$ and a smooth general fiber, there is an isomorphism \[\fM_{X/B}^{\sst} \simeq \fM_{0,2+2}^{\sst}/\GG_m \times_{\AA^1/\GG_m} B\] and a Cartesian square \[\begin{tikzcd}
        {\bcM_{0,2+2,\Ran}^{\land}} & {\bcM_{(X,x),\Ran}} \\
        {(X_{B\mh\dR})^{\land}_{p}} & {\Ran (X/B)}
        \arrow[from=1-1, to=1-2]
        \arrow[from=1-1, to=2-1]
        \arrow[from=1-2, to=2-2]
        \arrow[from=2-1, to=2-2]
        \end{tikzcd}\] which restricts to a Cartesian square \[\begin{tikzcd}
	    {\bcM_{0,2+2,\Ran}^{\land,\nd}} & {\bcM_{(X,x),\Ran}} \\
	    {\{p\}/\{b\}} & {\Ran (X/B)}
	    \arrow[from=1-1, to=1-2]
	    \arrow[from=1-1, to=2-1]
	    \arrow[from=1-2, to=2-2]
	    \arrow[from=2-1, to=2-2]
        \end{tikzcd}\]
    \end{enumerate}
\end{proposition}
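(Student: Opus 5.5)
The plan is to treat the two parts separately and reduce each to a deformation-theoretic statement about the local structure of the universal semistable curve near the marked points, resp. the node.

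\textbf{Part (1).} Given a smooth pointed curve $(X,x)$, a semistable modification at $x$ is determined, étale locally near $x$, by inserting a rational chain at $x$. The strategy is as follows.

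\begin{enumerate}
\item Construct a map $\fM_{0,2+1}^{\sst} \to \fM_{(X,x)}^{\sst}$. Given a two-pointed chain with an extra point on the last component, $(C,c_0,c_1,c_\infty)$, glue $C$ to $X$ by identifying $c_0$ with $x$. The resulting curve carries the marked point $c_\infty$, and its stable reduction is $(X,x)$, since the whole chain contracts. The same construction with the roles reversed gives the map from $\fM_{0,1+2}^{\sst}$.
\item Show this map is an equivalence. This uses the description of $\fM^{\sst}$ in terms of log-smooth modifications from \cite[Theorem~1.3.2]{ACFW}: both sides are smooth Artin stacks, locally of the form $\fA[\ell]$, and the map matches the stratifications by the number of components $\ell$. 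It therefore suffices to compare deformation spaces, that is, the smoothing parameters of the nodes.
\item Obtain the Cartesian square by base change of the Ran structure along $\bcM_{(X,x),\Ran} \to \Ran X$. A configuration whose image in $\Ran X$ collapses to $x$ consists precisely of points on the exceptional chain, which is $\bcM_{0,2+1,\Ran}$. The formal completion along $\{x\}$ corresponds to the formal neighborhood of the locus $(\PP^1,0,1,\infty)\lor(-)$. The key input here is that the stability condition with weights $\frac{1}{|I|+1}$ is local on components.
\end{enumerate}

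\textbf{Part (2).} Near $q$ the family is étale locally $xy=t$, pulled back from $\AA^2 \to \AA^1$. Semistable modifications at the node correspond to inserting chains, with the smoothing parameters of the new nodes multiplying to $t$. The stack of such data is exactly $\fM_{0,2+2}^{\sst}$ mapping to $\bcM_{0,2+2}\simeq\AA^1$, taken modulo the $\GG_m$ rescaling the coordinate. The identification $\fM_{X/B}^{\sst} \simeq \fM_{0,2+2}^{\sst}/\GG_m \times_{\AA^1/\GG_m} B$ then follows in three steps.

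\begin{enumerate}
\item Show that $\fM^{\sst}$ is compatible with étale base change on the curve.
\item Identify the universal local picture with the toric model $\fA[\ell]\to\AA^1/\GG_m$.
\item Apply the same Ran-space argument as in Part (1). This yields the squares over the formal neighborhood $(X_{B\mh\dR})^{\land}_p$, and over $\{p\}/\{b\}$ after base change to the nodal locus $0\in\AA^1$.
\end{enumerate}

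\textbf{Main obstacle.} The hardest step is the equivalence of stacks, not just of points. One must check that automorphisms match: these are $\GG_m^\ell$ acting on the chain, together with the extra $\GG_m$ in Part (2). One must also check that the gluing map is étale, hence an isomorphism, on each $\fA[\ell]$ chart. I would do this by comparing the logarithmic structures, following \cite{ACFW, graberRelativeVirtualLocalization2005}.
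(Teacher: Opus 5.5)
The paper does not prove this statement itself. It imports it from \cite[Theorem~1.3.2]{ACFW} and \cite[Propositions~3.23, 3.26]{Naf2}. Your overall plan, identifying the local model via \cite{ACFW} and then base-changing the Ran structure, is the natural one. However, the map you build in Step~1 of Part~(1) is the wrong map, and Step~2 fails for it.

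\textbf{Why Step~1 fails.} For $X\cup_{x\sim c_0}C$ to have stable reduction $(X,x)$ you must forget $c_1$. Once you do, the component $C_0\ni c_0,c_1$ becomes an exceptional component with two special points and a $\GG_m$ of automorphisms. Moreover, the node $x\sim c_0$ is never smoothed. So your map is the composite $\fM_{0,2+1}^{\sst}\to\fM_{0,2}^{\sst}\xrightarrow{(X,x)\lor(-)}\fM_{(X,x)}^{\sst}$: forget $c_1$, then attach the chain. This composite has three defects:
\begin{enumerate}
\item it misses the open point $(X,x)$;
\item it sends the stratum with automorphism group $\GG_m^{\ell}$ to the stratum with $\GG_m^{\ell+1}$;
\item it factors through a stack of dimension $-1$, whereas source and target both have dimension $0$.
\end{enumerate}
Concerning the last point, the source and target are locally $\AA^{\ell}/\GG_m^{\ell}$, not $\fA[\ell]=\AA^{\ell}/\GG_m^{\ell+1}$ as you assert in Step~2; the latter is the local form of $\fM_{0,2}^{\sst}$. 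So the stratifications do not match, and no comparison of deformation spaces can make this map an equivalence.

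The same shift also breaks Step~3. On $(\PP^1,0,1,\infty)\lor(-)$ the first component carries no weight-$\epsilon$ points. Under your gluing it becomes an exceptional component without light points, which violates stability. So your map does not even restrict to a map $\bcM_{0,2+1,\Ran}^{\land}\to\bcM_{(X,x),\Ran}$.

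\textbf{The missing idea.} The component carrying two of the three markings is rigid and stands in for the germ of $X$ at $x$. It has to be \emph{replaced} by $X$, not attached to it. Concretely:
\begin{enumerate}
\item contract $C\to(\PP^1,0,1,\infty)$;
\item choose a formal or \'{e}tale identification of a neighbourhood of $x\in X$ with a neighbourhood of $\infty\in\PP^1$;
\item glue $X\setminus\{x\}$ to the preimage of that neighbourhood in $C$ along the punctured disk.
\end{enumerate}
Then the node $C_0\cap C_1$ becomes the junction node of the modification of $X$ and keeps its smoothing parameter, and the strata and the groups $\GG_m^{\ell}$ match. The independence of the auxiliary identification is exactly what \cite{ACFW} supplies. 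With this map, your Step~3 bookkeeping becomes correct: over $\{x\}$ the light points lie on an exceptional chain of length at least $1$, and $X$ corresponds to the first $\PP^1$ of $(\PP^1,0,1,\infty)\lor(-)$.

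\textbf{Part~(2).} The same reading is needed here. The two outer components of $\fM_{0,2+2}^{\sst}$ are the two branches at $q$, and the smoothing parameters of all nodes, including the two outer ones, multiply to $t$. Your step ``$\fM^{\sst}$ is compatible with \'{e}tale base change on the curve'' is also not formal. The stack $\fM_{X/B}^{\sst}$ is defined through stable reduction of proper curves. So you must first define semistable modifications of the germ at the node and then prove this localization. That is again the content the paper imports from \cite{ACFW} and \cite{Naf2}.
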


Following~\cite{Li,ACFW,NY}, we can give an explicit description of the stacks $\fM_{0,2+2}^{\sst}$, $\fM_{0,2+1}^{\sst}$ etc., as well as their universal curves. We will use the arguments given in~\cite[Section~3.2]{Naf2}.

\begin{notation}
    For $1 \leq i,j \leq \ell$, denote $t_{[i,j]} = t_it_{1+1}\dotsb t_j$, with $t_{[i,j]}=1$ if $i > j$.
\end{notation}

\begin{definition} \label{def:A[l]}
    For $\ell \geq 0$, define the quotient stacks \begin{align*}
        \fA[\ell] & \coloneqq \{0\} \overset{\GG_m}{\times} \AA^1 \overset{\GG_m}{\times} \dotsb \overset{\GG_m}{\times} \AA^1 \overset{\GG_m}{\times} \{0\} = \AA^{\ell}/ \GG_m^{\ell+1} \\
        \fA[\ell + \epsilon] & \coloneqq \{0\} \overset{\GG_m}{\times} \AA^1 \overset{\GG_m}{\times} \dotsb \overset{\GG_m}{\times} \AA^1 \overset{\GG_m}{\times} \hat{\AA}^1 = (\AA^{\ell} \times \hat{\AA}^1)/ \GG_m^{\ell+1} \\
        \fA[\epsilon + \ell] & \coloneqq \hat{\AA}^1 \overset{\GG_m}{\times} \AA^1 \overset{\GG_m}{\times} \dotsb \overset{\GG_m}{\times} \AA^1 \overset{\GG_m}{\times} \{0\} = (\hat{\AA}^1 \times \AA^{\ell})/ \GG_m^{\ell+1} \\
        \fA[\epsilon + \ell + \epsilon] & \coloneqq \hat{\AA}^1 \overset{\GG_m}{\times} \AA^1 \overset{\GG_m}{\times} \dotsb \overset{\GG_m}{\times} \AA^1 \overset{\GG_m}{\times} \hat{\AA}^1 = (\hat{\AA}^1 \times \AA^{\ell} \times \hat{\AA}^1) / \GG_m^{\ell+1}~.
    \end{align*}
\end{definition}

\begin{proposition} \label{prop:M0-ss-str}
    For $\ell\geq 0$, there are \'{e}tale covers \[\fA[\epsilon + \ell + \epsilon] \to \fM_{0,2+2}^{\sst,\land,(\leq \ell+3)} \into \fM_{0,2+2}^{\sst,\land}\] of the open substack of at most $\ell+3$ irreducible components. For a surjective order-preserving map $\alpha : [\ell] \onto [\ell']$, we have maps \[\fA[\epsilon + \ell' + \epsilon] \to \fA[\epsilon + \ell + \epsilon]\] given by \[(t_0,t_1,\dots,t_{\ell'},t_{\ell'+1}) \mapsto (t_0,t_{[\alpha(0)+1,\alpha(1)]},\dots,t_{[\alpha(\ell-1)+1,\alpha(\ell)]},t_{\ell+1})\] and an induced isomorphism \[\fM_{0,2+2}^{\sst,\land} \simeq \colim_{\Delta^{\surj,\op}} \fA[\epsilon + \ell + \epsilon]\] 
    
    The images of the closed embeddings \[(-) \lor (\PP^1,0,1,\infty) : \fM_{0,2+1}^{\sst,\land} \into \fM_{0,2+2}^{\sst,\land}\] and \[(\PP^1,0,1,\infty) \lor (-) : \fM_{0,1+2}^{\sst,\land} \into \fM_{0,2+2}^{\sst,\land}\] can be identified with the colimit over $\fA[\epsilon + \ell]$ and $\fA[\ell + \epsilon]$ respectively, and the image of the closed embedding \[(\PP^1,0,1,\infty) \lor (-) \lor (\PP^1,0,1,\infty) : \fM_{0,2}^{\sst} \into \fM_{0,2+2}^{\sst,\land}\] can be identified with the colimit over $\fA[\ell]$.
\end{proposition}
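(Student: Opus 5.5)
The plan is to build the étale charts explicitly from the standard local model of a two-pointed semistable genus zero chain with a formal node attached at each end, and then identify the gluing data with the face structure of $\Delta^{\surj}$.

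\textbf{Step 1: the local model for a fixed number of components.} Over $\AA^{\ell}$ with coordinates $t_1,\dots,t_\ell$, I would construct the family of rational chains of Li / Abramovich--Cadman--Fantechi--Wise (cf.~\cite[Section~3.2]{Naf2}). It is obtained by iteratively blowing up the trivial $\PP^1$-family, so that the fiber over $t$ has one node for each coordinate $t_i=0$. The smoothing parameter of the $i$-th node is $t_i$.

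Attaching $(\PP^1,0,1,\infty)$ at both ends and adding formal smoothing parameters $t_0,t_{\ell+1}\in\hat{\AA}^1$ for the two outer nodes gives a family over $\hat{\AA}^1\times\AA^\ell\times\hat{\AA}^1$. This family has at most $\ell+3$ components.

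The torus $\GG_m^{\ell+1}$ rescales the $\ell+1$ interior components. It acts on the smoothing parameters by $t_i\mapsto \lambda_{i-1}^{-1}\lambda_i t_i$, and $t_0$, $t_{\ell+1}$ are acted on only by $\lambda_0$ and $\lambda_\ell$ respectively. This is exactly the iterated balanced product in Definition~\ref{def:A[l]}, so the family descends to a map $\fA[\epsilon+\ell+\epsilon]\to\fM_{0,2+2}^{\sst,\land}$.

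\textbf{Step 2: étaleness onto the open substack.} I would use deformation theory of nodal curves. The versal deformation of a chain with $m$ nodes is smooth, with one smoothing coordinate per node, and the automorphisms are $\GG_m$ per unmarked rational component.

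Comparing tangent complexes at each point shows the map is étale, after matching the stabilizer $\GG_m^{\ell+1}$ with the automorphism group of the chain with $\ell+1$ interior components. It lands in the open locus $\fM_{0,2+2}^{\sst,\land,(\le\ell+3)}$ and surjects onto it: a curve with fewer components arises where some $t_i\neq0$.

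Formal completion along the image of $\fM_{0,2}^{\sst}$ corresponds to completing in $t_0$ and $t_{\ell+1}$. This is the step where care is needed, since we must check that $\fM_{0,2+2}^{\sst}$ is formally smooth along that locus. The three sub-statements then follow by setting $t_0=0$ and/or $t_{\ell+1}=0$. These are precisely the loci where the end node persists, which are the images of the closed embeddings $(-)\lor(\PP^1,0,1,\infty)$ and $(\PP^1,0,1,\infty)\lor(-)$.

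\textbf{Step 3: the simplicial gluing.} For a surjection $\alpha:[\ell]\onto[\ell']$, the chart $\fA[\epsilon+\ell'+\epsilon]$ should map to $\fA[\epsilon+\ell+\epsilon]$ by the products $t_{[\alpha(j-1)+1,\alpha(j)]}$. I would check this on the universal curves via the standard identity for a blown-up node: smoothing a chain of consecutive nodes with parameters $t_a,\dots,t_b$ produces a single node with parameter $t_{[a,b]}$. This gives compatible isomorphisms of pulled-back universal curves, so the maps commute over $\fM_{0,2+2}^{\sst,\land}$.

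\textbf{Step 4: the colimit.} The open substacks $\fM^{(\le\ell+3)}$ exhaust $\fM_{0,2+2}^{\sst,\land}$. It then remains to show $\colim_{\Delta^{\surj,\op}}\fA[\epsilon+\ell+\epsilon]\to\fM_{0,2+2}^{\sst,\land}$ is an isomorphism. I expect this to be the main obstacle.

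My approach is to check it on $R$-points stratum by stratum. A point of $\fM$ with $m+1$ interior components is hit by charts indexed by $[\ell]$ for $\ell\ge m$. The choices of which coordinates are nonzero form a category of surjections $[\ell]\onto[m]$ under $[\ell]$. This category is contractible, with initial object $\id_{[m]}$. Hence the colimit reduces to the $\ell=m$ chart, modulo its automorphisms, which are already accounted for by the torus.

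To make this rigorous in the $\infty$-categorical setting, I would argue that the diagram is an étale hypercover-like diagram. Concretely, the fiber products $\fA[\ell]\times_{\fM}\fA[\ell']$ should decompose as unions of charts indexed by spans of surjections. This reduces the claim to cofinality of $\Delta^{\surj,\op}$-indexed diagrams, following the argument of \cite[Section~3.2]{Naf2}.
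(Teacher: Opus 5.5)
Your overall route is the paper's. The proposition is stated without a separate proof; it rests on Li's family $\overline{W}[\ell]$ with its charts $\overline{V}^i[\ell]$ and $\GG_m^{\ell+1}$-action, and on the cited references for étaleness and the colimit. Your Steps 1, 2 and the stalkwise count in Step 4 are sound in outline.

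Step 3, however, misreads the transition maps. Since $\alpha:[\ell]\onto[\ell']$ is order-preserving and surjective, $\alpha(j)-\alpha(j-1)\in\{0,1\}$. So $t_{[\alpha(j-1)+1,\alpha(j)]}$ is either the empty product $1$ or the single coordinate $t_{\alpha(j)}$; a product of several smoothing parameters never occurs. The map $\fA[\epsilon+\ell'+\epsilon]\to\fA[\epsilon+\ell+\epsilon]$ is therefore the open embedding onto the locus $\{t_j\neq 0 : \alpha(j)=\alpha(j-1)\}$. It is equivariant along the homomorphism $\GG_m^{\ell'+1}\to\GG_m^{\ell+1}$, $(\lambda_k)_k\mapsto(\lambda_{\alpha(j)})_j$, whose image is exactly the stabilizer of $\{t_j=1 : \alpha(j)=\alpha(j-1)\}$. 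This is how the paper uses these maps in the proof of Lemma~\ref{lem:cGamma02}.

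The identity you invoke, a chain of nodes with parameters $t_a,\dots,t_b$ contracting to one node with parameter $t_{[a,b]}$, describes contraction maps from a chart with more nodes to one with fewer. Those go in the opposite direction and are not part of this diagram. They are also not étale: for $(t_1,t_2)\mapsto t_1t_2$, $\AA^2/\GG_m^3\to\AA^1/\GG_m^2$, the fiber over the origin contains a $B\GG_m$. Your Step 4 in fact depends on the correct reading. The fiber of $\fA[\epsilon+\ell+\epsilon]$ over a point with $m+1$ interior components is the discrete set $\hom_{\Delta^{\surj}}([\ell],[m])$, functorial by precomposition, precisely because the transition maps are these open embeddings. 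The check to make in Step 3 is that Li's family restricted to $\{t_j=1\}$ is Li's family of shorter length, equivariantly for the map of tori above. This can be read off from the gluing $y_k=t_{[i+1,k]}y_i$, in which the factor $t_j=1$ simply drops out.

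Two smaller points. In Step 4 the diagram is not hypercover-like: a $\Delta^{\surj,\op}$-diagram has only degeneracy-type maps (here open embeddings) and no face maps. The clean globalization uses two facts: each chart map is representable and étale, since stabilizers match, and colimits of étale sheaves are universal. After base change to a test scheme, one compares geometric stalks, where your computation $\colim_{\Delta^{\surj,\op}}\hom_{\Delta^{\surj}}([\ell],[m])\simeq|\Delta^{\surj}_{/[m]}|\simeq\pt$ finishes the argument.

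In Step 2, no formal smoothness check is needed, because formal completion commutes with étale base change. What does need care is that the uncompleted four-pointed Li family lies in $\fM_{0,2+2}^{\sst}$ only away from a hypersurface of the form $\{t_0t_1\cdots t_{\ell+1}=c\}$, $c\neq 0$, where the second and third marked points collide. The formal completion lies inside this open.
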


The stack $\fA[\ell + 2]$ can be viewed as the classifying stack for curves with a choice of $\ell+3$ ordered irreducible components, where the $i$-th component coincides with the $i+1$-th component over the locus $\{(t_0,\dots,t_{\ell+2}); t_i \neq 0\}$, and the substack $\fA[\epsilon + \ell + \epsilon]$ as the substack where the first and last component are, up to a deformation, separated by a node from the second and second-to-last components respectively. In particular, over $(\hat{\AA}^1 \times \GG_m^{\ell} \times \hat{\AA}^1) / \GG_m^{\ell+1}$ we have deformations of the curve $\PP^1 \lor \PP^1 \lor \PP^1$, with $0,1$ of the first copy and $1,\infty$ of the last copy being the four marked points, while over $\{(0,\dots,0)\}$ we have the curve given by the join of $\ell+3$ copies of $\PP^1$.

The universal curve over $\fA[\ell]$, as well as its formal completions over $\fA[\epsilon + \ell]$, $\fA[\ell + \epsilon]$, and $\fA[\epsilon + \ell + \epsilon]$, can be described as follows.

In~\cite{Li}, Li defined a family of two-pointed genus zero curves $\overline{W}[\ell] \to \AA^{\ell}$ for each $\ell > 0$, whose fiber over a point $(t_1,\dots,t_{\ell})$ with $i$ zeros has exactly $i$ nodes. Following the description in~\cite{NY}, we have an open cover \[\overline{W}[\ell] \simeq \bigcup_{1 \leq i \leq \ell} \overline{V}^{i}[\ell]\] where the sets $\overline{V}^i[\ell]$ are constructed as follows: First, define \[V^i[\ell] = \left\{(x_i,y_i,t_1,\dots,t_{\ell}) \vcentcolon x_iy_i = t_i\right\} \subset \AA^{\ell+2} \xrightarrow{(t_1,\dots,t_{\ell})} \AA^{\ell}\] 
so that $x_i$ and $y_i$ give coordinates for the $\{i-1,i\}$-th irreducible components, where we use the convention that the $i-1$-th and $i$-th components are the same if $t_i \neq 0$. Then, define $\overline{V}^i[\ell] = V^i[\ell]$ for $1 < i < \ell$, $\overline{V}^1[\ell]$ the closure of $V^1[\ell]$ in $\PP^1 \times \AA^{\ell+1}$ (namely, adding the point $0$), and $\overline{V}^{\ell}[\ell]$ the closure of $V^{\ell}[\ell]$ in $\AA^1 \times \PP^1 \times \AA^{\ell}$ (namely, adding the point $\infty$).

The intersection of these open subsets is given as follows: For $i < j$ let \[V^{i,j}[\ell] \coloneqq \{y_it_{[i+1,j-1]} \neq 0\} \subset V^i[\ell]\] and \[V^{j,i}[\ell] \coloneqq \{x_jt_{[i+1,j-1]} \neq 0\} \subset V^j[\ell]~.\] Then we can identify \[V^i[\ell] \cap V^j[\ell] \simeq V^{i,j}[\ell] \simeq V^{j,i}[\ell]\] where the second isomorphism is given by the change of coordinates \[(x_i,y_i,t_1,\dots,t_{\ell}) \mapsto (t^{-1}_{[i+1,j-1]}y^{-1}_i, t_{[i+1,j]}y_i, t_1,\dots,t_{\ell})\]

We define a $\GG^{\ell+1}_m$ action on $\overline{W}[\ell]$ by defining it on each $\overline{V}^i[\ell]$, and showing that the gluing maps are compatible with the action. Let \[(\lambda_0,\dots,\lambda_{\ell}) \cdot (x_i,y_i,t_1,\dots,t_{\ell}) = (\lambda_{i-1}^{-1}x_i,\lambda_iy_i,\lambda_0^{-1}t_1\lambda_1,\dots,\lambda_{\ell-1}^{-1}t_{\ell}\lambda_{\ell})\]
Then over $V^{i,j}[\ell] \iso V^{j,i}[\ell]$ we have \begin{align*}
    (\lambda_0,\dots,\lambda_{\ell}) \cdot (x_i,y_i,t_1,\dots,t_{\ell}) & \mapsto (\lambda_i^{-1}t^{-1}_{[i+1,j-1]}y^{-1}_i, \lambda_it_{[i+1,j]}y_i, \lambda_0^{-1}t_1\lambda_1,\dots,\lambda_{\ell-1}^{-1}t_{\ell}\lambda_{\ell}) \\
    & = (\lambda_0,\dots,\lambda_{\ell}) \cdot (t^{-1}_{[i+1,j-1]}y^{-1}_i, t_{[i+1,j]}y_i, t_1,\dots,t_{\ell})
\end{align*}

\begin{definition}
    For $\ell \geq 0$, let \begin{align*}
        \overline{W}[\epsilon + \ell + \epsilon] & = (\overline{W}[\ell+2])^{\land}_{\overline{W}[\ell+2] \underset{\AA^{\ell+2}}{\times} (\{0\} \times \AA^{\ell} \times \{0\})} \\
        \overline{W}[\ell + \epsilon] & = (\overline{W}[\ell+1])^{\land}_{\overline{W}[\ell+1] \underset{\AA^{\ell+1}}{\times} (\AA^{\ell} \times \{0\})} \\
        \overline{W}[\epsilon + \ell] & = (\overline{W}[\ell+1])^{\land}_{\overline{W}[\ell+1] \underset{\AA^{\ell+1}}{\times} (\{0\} \times \AA^{\ell})} \\ 
    \end{align*}
    For $\ell=0$, we will write \[\overline{W}[0] = \PP^1 \to \AA^0 = \pt~.\]
    We then get families of curves over the stacks in Definition~\ref{def:A[l]}: \begin{align*}
        \fX[\epsilon + \ell + \epsilon] & = \overline{W}[\epsilon + \ell + \epsilon] / \GG_m^{\ell+1} \to \fA[\epsilon + \ell + \epsilon]\\
        \fX[\ell + \epsilon] & = \overline{W}[\ell + \epsilon] / \GG_m^{\ell+1}  \to \fA[\ell + \epsilon]\\
        \fX[\epsilon + \ell] & = \overline{W}[\epsilon + \ell] / \GG_m^{\ell+1}  \to \fA[\epsilon + \ell]\\
        \fX[\ell] & = \overline{W}[\ell] / \GG_m^{\ell+1}  \to \fA[\ell]\\
    \end{align*}
\end{definition}

\begin{definition} \label{def:univ-punct-sm-curve}
    Let \[\mathring{\fX}_{0,2}^{\sst} \coloneqq \fX_{0,2}^{\sst,\sm} \backslash (\fM_{0,2}^{\sst} \sqcup \fM_{0,2}^{\sst}) \to \fM_{0,2}^{\sst}\] be the smooth, punctured universal curve, where the map $\fM_{0,2}^{\sst} \sqcup \fM_{0,2}^{\sst} \to \fX_{0,2}^{\sst}$ is given by the sections $0,\infty$. Let \[W[\ell]/\GG_m^{\ell+1} = \mathring{\fX}[\ell] = \fX[\ell] \times_{\fX_{0,2}^{\sst}} \mathring{\fX}_{0,2}^{\sst} \to \AA^{\ell}\] be the corresponding base-change. Similarly, define $\mathring{\fX}_{0,2+1}^{\sst,\land}$ (resp. $\mathring{\fX}_{0,1+2}^{\sst,\land}$) as the smooth locus of the universal curve with the section $\infty$ (resp. $0$) removed, and $\mathring{\fX}_{0,2+2}^{\sst,\land}$ the smooth locus of the universal curve. Let $\mathring{\fX}[\ell + \epsilon]$, $\mathring{\fX}[\epsilon + \ell]$, and $\mathring{\fX}[\epsilon + \ell + \epsilon]$ be the corresponding base-change.
\end{definition}

\begin{definition}
    For $0 < i < j \leq \ell$, let $U^{i,j}[\ell] = V^{i,j}[\ell] = V^{j,i}[\ell]$. Define \[U^{0,j}[\ell] = \{x_jt_{[1,j-1]} \neq 0\} \subset V^j[\ell]\] and \[U^{i,\ell+1}[\ell] = \{y_it_{[i+1,\ell]} \neq 0\} \subset V^i[\ell]\] Set $y_0 = x_1^{-1}$, and $x_{\ell+1} = y_{\ell}^{-1}$.
\end{definition}

We then have:
\begin{lemma} \label{lem:cover-sm-punct}
     The smooth and punctured locus admits the following covering:
    \[\mathring{\fX}[\ell] \simeq (\bigcup_{i=0}^{\ell} U^{i,i+1}[\ell]) / \GG_m^{\ell+1}\]
\end{lemma}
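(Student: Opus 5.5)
Since the $\GG_m^{\ell+1}$-action on $\overline{W}[\ell]$ preserves the sections $0,\infty$, the smooth locus, and each of the open sets $U^{i,i+1}[\ell]$ (the defining inequalities scale by units), it suffices to prove the equality of open subsets
\[W[\ell] = \overline{W}[\ell]^{\sm} \setminus (0 \cup \infty) = \bigcup_{i=0}^{\ell} U^{i,i+1}[\ell]\]
upstairs over $\AA^{\ell}$; the quotient statement then follows because taking the quotient by $\GG_m^{\ell+1}$ commutes with unions of invariant opens. I would check this equality fiberwise over a point $(t_1,\dots,t_\ell)$, using the cover $\overline{W}[\ell] = \bigcup_i \overline{V}^i[\ell]$ and the explicit charts $x_iy_i = t_i$.

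For the inclusion $\supseteq$: on $U^{i,i+1}[\ell] \subset V^i[\ell]$ (for $0<i<\ell$) we have $y_i \neq 0$, and $t_{[i+1,i]} = 1$, so the condition is just $y_i \neq 0$. Rewriting via the gluing to $V^{i+1}[\ell]$, this chart is $\{y_i \neq 0\} \cong \{x_{i+1} \neq 0\}$, on which $x_i = t_i/y_i$ is a function of the coordinates, so the chart is isomorphic to $\GG_m \times \AA^{\ell}$ (coordinate $y_i$ and the $t$'s). In particular it is smooth over $\AA^\ell$, avoids the node $x_i=y_i=0$ and the node $x_{i+1}=y_{i+1}=0$, and avoids $0$ and $\infty$ since those lie only in $\overline{V}^1$ and $\overline{V}^\ell$ at $x_1=\infty$ and $y_\ell=\infty$. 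The extreme cases $U^{0,1}=\{x_1\neq 0\}\subset V^1$ and $U^{\ell,\ell+1}=\{y_\ell\neq 0\}\subset V^\ell$ are handled the same way, using $y_0=x_1^{-1}$ and $x_{\ell+1}=y_\ell^{-1}$; they exclude the added points $0,\infty$ because these are not in the affine $V^1,V^\ell$.

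For $\subseteq$: take a point $p$ of the fiber which is smooth and not $0,\infty$. Then $p\in V^i[\ell]$ for some $i$, and $p$ is not the node $x_i=y_i=0$ (when $t_i=0$). Hence $x_i\neq 0$ or $y_i\neq 0$. If $y_i\neq 0$, then $p\in U^{i,i+1}$. If $x_i\neq 0$, then via the gluing $V^{i}\cap V^{i-1}$ (with $x_i$ corresponding to $y_{i-1}^{-1}$), the point $p$ lies in $U^{i-1,i}$, where one uses the conventions $y_0=x_1^{-1}$ at $i=1$. The main obstacle is this bookkeeping: matching the inequality $x_jt_{[i+1,j-1]}\neq 0$ defining $V^{j,i}$ with $U^{i-1,i}$ for adjacent indices, and checking that points of the fiber lying on a component collapsed when some $t_k\neq 0$ are still caught. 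This works because $V^i \cap V^j$ for nonadjacent $i<j$ is contained in the union of the adjacent $U^{k,k+1}$ once $t_{[i+1,j-1]}\neq 0$, since the intermediate $y_k$ are then invertible.
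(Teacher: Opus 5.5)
Your argument is correct. The paper states this lemma without proof, treating it as immediate from the chart description of $\overline{W}[\ell]$, and your check of the equality of open subschemes upstairs, followed by passing to the $\GG_m^{\ell+1}$-quotient, is exactly the verification the paper leaves implicit.

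The inclusion $\subseteq$ is already complete once you reach your dichotomy. Every point off the sections $0,\infty$ lies in some affine $V^i[\ell]$. There $\{y_i\neq 0\}=U^{i,i+1}[\ell]$, including the case $i=\ell$. Likewise $\{x_i\neq 0\}=V^{i,i-1}[\ell]=U^{i-1,i}[\ell]$, with $U^{0,1}[\ell]=\{x_1\neq 0\}$ when $i=1$. So the closing discussion of nonadjacent overlaps and collapsed components is not needed.
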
 

Therefore, $\mathring{\fX}[\ell]$ has a cover by a $\GG_m^{\ell+1}$ quotients of $\ell+1$ affine schemes isomorphic to $\GG_m \times \AA^1$, with coordinates $(x_{i}, t_{i}) = (y_{i-1}^{-1},t_i)$ on the $i+1$-th component.

\subsection{Universal factorization algebras over semistable modifications}

Given a universal factorization algebra $\cA$, we get from the construction of Theorem~\ref{thm:loc-UFA} a factorization algebra over the smooth and punctured universal curve $\mathring{\fX}_{g,n}^{\sst} / \fM_{g,n}^{\sst}$. By restriction, we get a sheaf $\Vac(\cA)_{g,n}^{\epsilon}$ over $\bcM_{g,n,\Ran}$. The stable reduction map \[\fM_{g,n}^{\sst} \to \bcM_{g,n}\] lifts to a map \[\bcM_{g,n,\Ran} \to \Ran_{g,n} \coloneqq \Ran(\bcX_{g,n}/\bcM_{g,n})~.\] Pushing forward $\Vac(\cA)_{g,n}^{\epsilon}$, we get a sheaf $\Vac(\cA)_{g,n}^{0}$. Define an open subspace \[\mathring{\Ran}_{g,n} \subset \Ran_{g,n}\] whose fiber over an $R$-point $(X,\overline{x})$ is given by $\Ran(X^{\sm} \backslash \{\overline{x}\}/R)$. The latter is the relative Ran space of a smooth family of curves, so we have a factorization algebra $\cA_{g,n}$ over it. As we prove in~\cite{Naf2}, $\Vac(\cA)_{g,n}$ admits a structure of a factorization module over $\cA_{g,n}$, whose fiber over smooth unmarked points is given by the vacuum module. This defines our extension of factorization algebras to nodal and punctured curves. Its fibers over a puncture is then a chiral module supported at a point, and its fiber over a node admits a structure of a chiral bimodule supported at a point, with the two commuting actions coming from the two components of the normalization. They can be described as follows.

\begin{definition} \label{def:smoothing-module}
    For a universal factorization algebra $\cA$, let $\Vac(\cA)_{0,2+2}^{\epsilon}$ be the restriction of $\Vac(\cA)_{0,4}^{\epsilon}$ to $\bcM_{0,2+2,\Ran}$. Define $\Vac(\cA)_{0,2+1}$ and $\Vac(\cA)_{0,1+2}$ similarly. The smoothing module $\tilde{\fZ}_{\cA}$ of $\cA$ is the space of compactly supported sections \[\tilde{\fZ}_{\cA} \coloneqq \Gamma_c(\bcM_{0,2+2,\Ran}^{\land}, \Vac(\cA)^{\epsilon}_{0,2+2})~.\] The node module $\fZ_{\cA}$ of $\cA$ is the subspace \[\fZ_{\cA} \coloneqq \Gamma_c(\bcM_{0,2+2,\Ran}^{\nd}, \Vac(\cA)^{\epsilon}_{0,2+2})~.\] The right and left puncture modules $\fZ_{\cA}^+$ and $\fZ_{\cA}^-$ are the spaces \begin{align*}
        \fZ_{\cA}^+ & \coloneqq \Gamma_c(\bcM_{0,2+1,\Ran}^{\land}, \Vac(\cA)^{\epsilon}_{0,2+1}) \\
        \fZ_{\cA}^- & \coloneqq \Gamma_c(\bcM_{0,1+2,\Ran}^{\land}, \Vac(\cA)^{\epsilon}_{0,1+2})~.
    \end{align*} Finally, the chiral Zhu algebra $\fZ_{\cA}^0$ of $\cA$ is the space \[\fZ_{\cA}^0 \coloneqq \Gamma_c(\bcM_{0,2,\Ran}, \Vac(\cA)^{\epsilon}_{0,2})~.\] The prestacks $\bcM_{0,2+2,\Ran}^{\land}$ etc. are as in Definition~\ref{def:M02plus1Ran}.
\end{definition}

By Proposition~\ref{prop:local-str-ss-modif}, up to a choice of a local coordinate, the fiber of $\Vac(\cA)_{g,n}^0$ at a marked point is given by $\fZ_{\cA}^+ \simeq \fZ_{\cA}^-$, the fiber over a node given by $\fZ_{\cA}$, and the fiber over a formal neighborhood of a node in a smoothing family (namely, a family of curves which is given locally around the node by $\Spf k[\![x,y,t]\!]/(xy-t)$) is given by $\tilde{\fZ}_{\cA}$.

The associative algebra structure on $\bcM_{0,2,\Ran}$ induces an associative algebra structure on $\fZ_{\cA}^0$. In Section~\ref{sec:ss} we will prove that its zeroth homology recovers the usual Zhu algebra in the case where $\cA$ comes from a vertex operator algebra.

We then define the factorization homology by pushforward:

\begin{definition} \label{def:fact-hom}
    For $2g+n-2>0$ and a universal factorization algebra $\cA$, let \[\int_{g,n} \cA \coloneqq p^{g,n}_!\Vac(\cA)^0_{g,n} \simeq q^{g,n}_!p^{g,n}_!\Vac(\cA)^{\epsilon}_{g,n} \in \IndCoh(\bcM_{g,n})~,\] where \[\bcM_{g,n,\Ran} \xrightarrow{q_{g,n}} \Ran_{g,n} \xrightarrow{p_{g,n}} \pt\] are the projections. For $(X,\overline{x}) \in \bcM_{g,n}(R)$, define \[\int_{X \backslash {\overline{x}}} \cA = \int_{g,n} \cA \mid (X,\overline{x}) \in \IndCoh(R)\] to be the $!$-restriction.
\end{definition}

\subsection{Unit factorization algebra}

Our goal in this section is to compute the spaces $\fZ,\fZ^0$ etc. for the case of the unit factorization algebra.

\begin{definition}
    Let $\pmb{1}$ be the universal factorization algebra, given by $\omega_{\MDisk_{\Ran}}$. For a family of smooth curves $X/S$, let $\pmb{1}_{X/S}$ be the corresponding factorization algebra, whose underlying D-module is given by $\omega_{\Ran(X/S)}$. 
\end{definition}

For a smooth and proper curve $X$, a basic result in the theory of chiral algebras over $X$ is the contractibility of the Ran space $\Ran X$. More precisely, as was proven in~\cite[Proposition~4.3.3]{BD}, the trace map \[\int_X \pmb{1}_X = \Gamma_{\dR,c}(\Ran X, \omega_{\Ran X}) \to k\] is an isomorphism. 

\begin{proposition}\label{prop:Z-unit}
    There is an isomorphism $\tilde{\fZ}_{\pmb{1}} \simeq \omega_{k[![t]\!]}$, which restrict to isomorphisms $\fZ_{\pmb{1}}^0 \simeq \fZ_{\pmb{1}} \simeq \fZ_{\pmb{1}}^{\pm 1} \simeq k$.
\end{proposition}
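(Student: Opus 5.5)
For $\cA = \pmb{1}$ the sheaf $\Vac(\pmb{1})^{\epsilon}_{0,2+2}$ is the dualizing sheaf of $\bcM_{0,2+2,\Ran}^{\land}$. This holds because $\pmb{1}_{X/S} = \omega_{\Ran(X/S)}$ and $!$-pullback preserves dualizing sheaves, so restricting along the various embeddings stays dualizing. Hence
\[\tilde{\fZ}_{\pmb{1}} = \Gamma_c(\bcM_{0,2+2,\Ran}^{\land}, \omega),\]
and similarly $\fZ^0_{\pmb{1}}$, $\fZ_{\pmb{1}}$ and $\fZ^{\pm}_{\pmb{1}}$ are compactly supported de Rham homologies of the corresponding prestacks, computed relative to the base so as to respect the de Rham structure only along the fibers. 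The plan is therefore to show that the projection
\[\bcM_{0,2+2,\Ran}^{\land} \to \hat{\AA}^1 = \bcM_{0,2+2}^{\land}\]
(the stable reduction to the formal neighbourhood of the nodal point of $\bcM_{0,4}$) is \emph{relatively contractible}: the pushforward of $\omega$ along it is $\omega_{\hat{\AA}^1} = \omega_{k[\![t]\!]}$. The other statements then follow by base change along $\{0\} \into \hat{\AA}^1$ and along the closed embeddings of Proposition~\ref{prop:M0-ss-str}, which identify the strictly nodal, puncture and Zhu loci with the colimits over $\fA[\ell]$, $\fA[\epsilon+\ell]$ and $\fA[\ell+\epsilon]$.

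First I would factor the projection as
\[\bcM_{0,2+2,\Ran}^{\land} \xrightarrow{q} \Ran_{0,2+2}^{\land} \to \hat{\AA}^1,\]
where $\Ran_{0,2+2}^{\land}$ is the relative Ran space of the stable (unmodified) family $\bcX^{\land}_{0,2+2}$, a formal smoothing of a nodal curve $\PP^1 \lor \PP^1$ with the node $p$. By Proposition~\ref{prop:local-str-ss-modif}, $q$ is an isomorphism away from configurations hitting the marked points or the node. Over these points its fibers are Ran spaces of stable configurations on rational chains, equivalently colimits over Losev--Manin spaces $\bcM_{0,2,I}$.

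Second, I would show that $q_!\omega \simeq \omega$. I would write $q$ as the colimit over $\fSet^{\surj,\op}$ of the proper maps $\bcM_{0,2+2,I}^{\land} \to (\bcX^{\land})^{I}$. Each of these is a composite of blow-ups along smooth centers (Hassett's reduction morphisms, as in~\cite[Section~6]{Has}). The fibers over a point where $J\subset I$ collides with a marked point are Losev--Manin type spaces; these are smooth, proper, rational, and have $H_{\dR}$ concentrated in even degrees. The claim is then that, after passing to the Ran colimit, the fiber contributions collapse. Concretely, the fiber of $\bcM_{0,2,\Ran}$ over a point is itself a "Ran space of a rational chain", and I would prove its contractibility, $\Gamma_c(\bcM_{0,2,\Ran},\omega) \simeq k$, by the Beilinson--Drinfeld argument~\cite[Proposition~4.3.3]{BD}. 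The union map makes $\bcM_{0,2,\Ran}$ (relative to $\fM^{\sst}_{0,2}$) a commutative non-unital monoid in the $\cup$ sense. Hence $C = \Gamma_c(\omega)$ is an idempotent nonunital commutative coalgebra-like object whose diagonal $C \to C\otimes C$ is an isomorphism; it is nonzero because a fixed point gives a trace; therefore $C \simeq k$. The same argument applies fiberwise to $q$ and gives $q_!\omega \simeq \omega_{\Ran_{0,2+2}^{\land}}$.

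Third, I would apply BD contractibility relatively to the family $\bcX^{\land}_{0,2+2}/\hat{\AA}^1$, a proper (nodal in the special fiber) family. I would use the same idempotence argument for the relative union map to get the pushforward $\omega_{\hat{\AA}^1}$, i.e. $\omega_{k[\![t]\!]}$. Restricting to $t=0$ gives $\fZ_{\pmb{1}} \simeq k$. For $\fZ^0_{\pmb{1}}$ and $\fZ^{\pm}_{\pmb{1}}$, only the second step (the Ran space of the chain, with no stable base) is needed.

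The main obstacle is making the idempotence argument rigorous on $\bcM_{0,2,\Ran}$. The union map on stable configurations is well defined, since a union of stable subsets is stable, but the pushforward along the diagonal/union must be shown to commute with $\Gamma_c$. This needs pseudo-properness (from properness of the Losev--Manin spaces) and a base-change statement over the non-separated stack $\fM_{0,2}^{\sst}$ with its $\GG_m^{\ell}$ stabilizers. I would handle this via the étale presentations $\fA[\ell]$ of Proposition~\ref{prop:M0-ss-str}, checking the claim $\GG_m^{\ell+1}$-equivariantly on each $\overline{W}[\ell]$ and passing to the $\Delta^{\surj,\op}$-colimit.
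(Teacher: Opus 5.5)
There is a genuine gap. Everything in your plan is a Ran-space contractibility statement, but the answer is governed by the base stack of semistable chains, which you never compute.

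\textbf{The reduction collapses onto the idempotence step, which only gives the easy half.} Your factorization through $\Ran^{\land}_{0,2+2}$ is degenerate. Since $\bcM_{0,2+2,\Ran}^{\land}$ is a formal completion along $(\PP^1,0,1,\infty)\lor(-)\lor(\PP^1,0,1,\infty)$, every weight-$\epsilon$ point lies, at the reduced level, on the inserted chain. So $q$ lands in the formal neighbourhood of the single configuration $\{p\}$, which is just $\hat{\AA}^1$; this is what Proposition~\ref{prop:local-str-ss-modif}(2) says. There is no locus where $q$ is an isomorphism, your third step is vacuous, and the ``fiber of $q$'' in your second step is the original space. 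That leaves the idempotence argument, which cannot carry the weight. The union map on stable configurations exists only on $\bcM_{0,2,\Ran}\times_{\fM_{0,2}^{\sst}}\bcM_{0,2,\Ran}$, since both configurations must lie on the same curve. So it gives no map $C\otimes C\to C$ on the absolute $C=\Gamma_c(\bcM_{0,2,\Ran},\omega)$. Moreover, the Beilinson--Drinfeld argument uses $m\circ\Delta=\mathrm{id}$ together with connectedness and a minimal-degree argument. The assertion that ``the diagonal is an isomorphism'' is not available, and even with nonvanishing it would not suffice. Run fiberwise, the argument only shows that $q_!\omega\to\omega_{\fM_{0,2}^{\sst}}$ is an isomorphism for $q:\bcM_{0,2,\Ran}\to\fM_{0,2}^{\sst}$, whose fibers are $(\Ran\GG_m)^c$, not Losev--Manin spaces. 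That is the paper's Lemma~\ref{lem:tr-q2}, which it gets by citing Beilinson--Drinfeld contractibility on geometric points plus a tangent-space check.

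\textbf{The missing half is the base computation.} You need $\Gamma_c(-,\omega)$ over the non-finite-type Artin stacks $\fM_{0,2}^{\sst}$ and $\fM_{0,2+2}^{\sst,\land}$. There the $\GG_m^{\ell}$ automorphisms and the node-smoothing parameters are coherent, not de Rham, directions, so no contractibility argument applies. Your last paragraph treats the presentations by $\fA[\ell]$ as bookkeeping for base change, but that is where the actual computation lives (Lemmas~\ref{lem:cGamma02} and~\ref{lem:tr-p2}):
\begin{itemize}
\item One shows that $p^!$ has a left adjoint, using $\fM_{0,2}^{\sst}\simeq\colim_{\Delta^{\surj,\op}}\fA[\ell]$ and pro-left adjoints for the open transition maps.
\item One factors $\fA[\ell]\to\pt$ through $\ell$ contractions $\AA^1/\GG_m\to\pt$ followed by $B\GG_m\to\pt$. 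Then $\Gamma_c(\AA^1/\GG_m,\omega)\simeq k$ via filtered vector spaces, and $\Gamma_c(B\GG_m,\omega)\simeq k$ from $\omega_{B\GG_m}\simeq k[-1]$ and $\Gamma_c(V_\bullet)=V_0[1]$.
\item On $\fA[\epsilon+\ell+\epsilon]$ one computes the $\GG_m^{\ell+1}$-invariant endomorphisms of $\omega$. The only invariant monomials are $(t_0\cdots t_{\ell+1})^m$, and the transition maps are constant. This is exactly where $\omega_{k[\![t]\!]}$ with $t=t_0\cdots t_{\ell+1}$ comes from.
\item On the nodal and puncture loci only $m=0$ survives, giving $k$.
\end{itemize}
Finally, the puncture and Zhu loci ($t_0=0$ and/or $t_{\ell+1}=0$) are not pulled back from $\hat{\AA}^1$. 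So they cannot be obtained by base change from a pushforward to $\hat{\AA}^1$, as you propose; they require this computation on $\fM_{0,2+2}^{\sst,\land}$ itself.
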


The proof structure is the following: The spaces $\tilde{\fZ}_{\pmb{1}}$, $\fZ_{\pmb{1}}^{+}$, and $\fZ_{\pmb{1}}^0$ are defined as (compactly supported) global sections over $\bcM_{0,2+2,\Ran}^{\land}$, $\bcM_{0,1+2,\Ran}^{\land}$, and $\bcM_{0,2,\Ran}$ respectively. The projections to a point factor as \begin{align*}
    \bcM_{0,2+2,\Ran}^{\land} & \xrightarrow{q^{2+2}} \fM_{0,2+2}^{\sst,\land} \xrightarrow{p^{2+2}} \pt \\
    \bcM_{0,1+2,\Ran}^{\land} & \xrightarrow{q^{1+2}} \fM_{0,1+2}^{\sst,\land} \xrightarrow{p^{1+2}} \pt \\
    \bcM_{0,2,\Ran} & \xrightarrow{q^{2}} \fM_{0,2}^{\sst} \xrightarrow{p^{2}} \pt~,
\end{align*} and so $\tilde{\fZ}_{\pmb{1}} \simeq q^{2+2}_!p^{2+2}_!\omega_{\bcM_{0,2+2,\Ran}^{\land}}$, $\fZ_{\pmb{1}} \simeq q^{1+2}_!p^{1+2}_!\omega_{\bcM_{0,1+2,\Ran}^{\land}}$, and $\fZ_{\pmb{1}}^0 \simeq p^2_!q^2_!\omega_{\bcM_{0,2,\Ran}}$. Since fibers of $q^{2+2}$ are de Rham spaces, $q^{2+2}_!$ is well-defined on relative holonomic D-modules, and in particular on $\omega$, and similarly for $q^{1+2}_!$ and $q^2_!$. As for $p^{2+2}_!$, $p_!^{1+2}$, and $p_!^2$, we have the following lemma:

\begin{lemma} \label{lem:cGamma02}
    The functor \[p^{2,!} : \Vect \to \IndCoh(\fM_{0,2}^{\sst})\] admits a left adjoint $\Gamma_c(\fM_{0,2}^{\sst},-)$, and the trace map \[\Gamma_c(\fM_{0,2}^{\sst},\omega_{\fM_{0,2}^{\sst}}) \to k\] is an isomorphism.
\end{lemma}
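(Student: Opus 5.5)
We would use the presentation of Proposition~\ref{prop:M0-ss-str}, restricted to the image of $\fM_{0,2}^{\sst}$, namely
\[\fM_{0,2}^{\sst} \simeq \colim_{\Delta^{\surj,\op}} \fA[\ell], \qquad \fA[\ell] = \AA^{\ell}/\GG_m^{\ell+1}.\]
Since $\IndCoh$ with $!$-pullback turns this colimit into a limit, $\IndCoh(\fM_{0,2}^{\sst}) \simeq \lim_{\Delta^{\surj}} \IndCoh(\fA[\ell])$. The maps in the diagram are the closed embeddings $t_i = t_{[\alpha(i-1)+1,\alpha(i)]}$ described in Proposition~\ref{prop:M0-ss-str}. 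These are proper, so their $!$-pullbacks admit left adjoints given by $*$-pushforward. By the standard formalism for such colimits, a left adjoint to $p^{2,!}$ then exists as soon as each $p_\ell^! : \Vect \to \IndCoh(\fA[\ell])$ has a left adjoint $\Gamma_c(\fA[\ell],-)$ compatible with these pushforwards. In that case
\[\Gamma_c(\fM_{0,2}^{\sst},\cF) \simeq \colim_{\Delta^{\surj,\op}} \Gamma_c(\fA[\ell],\cF|_{\fA[\ell]}),\]
where the transition maps come from the counits of the closed embeddings.

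The first step is to construct $\Gamma_c$ on each $\fA[\ell]$. The space $\AA^\ell$ is not proper, but it is $\GG_m^{\ell}$-equivariantly contracting. We expect that left adjoints to $!$-pullback to a point on $\AA^\ell/\GG_m^{\ell+1}$ exist on the relevant (monodromic, holonomic) category, as for $\AA^1/\GG_m$. The $\GG_m^{\ell+1}$ action factors through the $\GG_m^{\ell}$ action on $\AA^\ell$ times a trivially acting $\GG_m$. On $\AA^1/\GG_m$ one can use the decomposition $\{0\}/\GG_m \sqcup \pt$ together with the left adjoint $B\GG_m \to \pt$ given by group homology, $C_\bullet(B\GG_m)$-coinvariants, which exists for ind-coherent/D-modules on $B\GG_m$ since $\GG_m$ is affine.

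The second step is the trace computation. Stratify $\AA^\ell$ by coordinate strata $\GG_m^{S}\times\{0\}^{\ell-S}$; the quotient of each stratum is $B\GG_m^{\ell+1-|S|}$. In $\Gamma_c(\fA[\ell],\omega)$, each stratum contributes $C_\bullet(B\GG_m^{\ell+1-|S|})$, up to a shift. Each stratum corresponds to a face of $\Delta^{\surj}$, namely the chain obtained by contracting the components with $t_i\neq 0$, so these contributions are exactly what the colimit over $\Delta^{\surj,\op}$ reassembles. We would then organize the total colimit as the geometric realization of a semi-simplicial object. Its $\ell$-th term, after removing the overlap with lower strata that the colimit identifies, is the contribution of the open stratum $\GG_m^\ell/\GG_m^{\ell+1} \simeq B\GG_m$, i.e.\ chains with exactly $\ell+1$ components. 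The comparison map to $k$ is induced by the trace, and the key claim is that the resulting realization is contractible.

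This contractibility is the main obstacle. The plan is to identify the semi-simplicial object with the nerve-type bar construction computing $\colim$ of $C_\bullet(B\GG_m^{n})$ along the coface maps given by multiplication $\GG_m^{n+1}\to\GG_m^n$. This is a bar construction for $C_\bullet(B\GG_m)$ acting on itself, and so it is augmented to $k$ with an extra degeneracy coming from inserting a trivial component. Concretely, the unit $k\to C_\bullet(B\GG_m)$ should provide a contracting homotopy for the augmented simplicial object, which makes the trace an isomorphism. If this identification is messy, a fallback is to filter by the number of components and check on the associated graded, via the stratification $\fM_{0,2}^{\sst}=\bigsqcup_\ell B\GG_m^{\ell+1}$ with its cofiber sequences for $\Gamma_c$, that the spectral sequence collapses to $k$.
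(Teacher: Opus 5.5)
Your reduction to the charts is the same as the paper's: write $\IndCoh(\fM_{0,2}^{\sst})\simeq\lim_{\Delta^{\surj}}\IndCoh(\fA[\ell])$ and set $\Gamma_c(\fM_{0,2}^{\sst},\cF)=\colim\Gamma_c(\fA[\ell],\cF|_{\fA[\ell]})$. Your justification of it, however, is wrong.

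\textbf{The transition maps are open, not closed.} The maps $f_\alpha:\fA[\ell']\to\fA[\ell]$ are not closed embeddings of stacks. Setting some $t_i=1$ is a closed embedding $\AA^{\ell'}\into\AA^{\ell}$, but $\{t_i=1\}$ is not $\GG_m^{\ell+1}$-stable, and its saturation is $\{t_i\neq 0\}$. So $f_\alpha$ is the open embedding $\dotsb\overset{\GG_m}{\times}\GG_m\overset{\GG_m}{\times}\dotsb\into\dotsb\overset{\GG_m}{\times}\AA^1\overset{\GG_m}{\times}\dotsb$. Hence $f_\alpha^!$ has no left adjoint on $\IndCoh$, and there are no ``counits of closed embeddings.'' The paper instead gets the transition maps from the left adjoint on pro-categories (equivalently, by Yoneda from $\hom(\cF,p^!W)\to\hom(f_\alpha^!\cF,f_\alpha^!p^!W)$). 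That part is repairable.

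\textbf{The left adjoint on each chart is never built.} You only ``expect'' $\Gamma_c(\fA[\ell],-)$ on a monodromic/holonomic piece, but the lemma asks for a left adjoint on all of $\IndCoh$. The paper factors $p_\ell$ into $\ell$ contractions $\AA^1/\GG_m\to\pt$ followed by $B\GG_m\to\pt$. It then uses $\IndCoh(\AA^1/\GG_m)\simeq\operatorname{Fil}(\Vect)$, where $p^!$ is a constant filtration and so preserves limits and colimits. It also uses $\IndCoh(B\GG_m)\simeq\operatorname{gr}(\Vect)$, where the left adjoint is $V_\bullet\mapsto V_0[1]$.

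\textbf{The genuine gap is the trace computation.} You carry it out in the de Rham/Betti picture: strata contribute $C_\bullet(B\GG_m^{r})$, which a bar construction is then supposed to contract. In $\IndCoh$ this is not what happens.
\begin{itemize}
\item Because $\operatorname{Rep}(\GG_m)$ is semisimple, $\Gamma_c(B\GG_m^{r},\omega)\simeq k$.
\item Because $\IndCoh$ has no $j_!$, open strata do not contribute their own $\Gamma_c$; for example $\Gamma_c(\AA^1/\GG_m,j_*\omega_{\pt})=0$. This also sinks your stratification fallback.
\item Directly, $\Gamma_c(\AA^1/\GG_m,\omega)\simeq\hom(\omega,\omega)^\vee\simeq k$, since graded $k[t]$-endomorphisms of $\omega$ are scalars.
\end{itemize}
Hence $\Gamma_c(\fA[\ell],\omega)\simeq k$ for every $\ell$. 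The dual of each transition map is the restriction $\hom_{\fA[\ell]}(\omega,\omega)\to\hom_{\fA[\ell']}(\omega,\omega)$, which is an isomorphism $k\to k$. The colimit of this constant diagram is $k$ because $\Delta^{\surj,\op}$ is weakly contractible (it has the initial object $[0]$). So your ``main obstacle'' does not exist in the correct setting.

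In the setting you are actually computing in, the contractibility you hope for is false. Take the chart $s:B\GG_m=\fA[0]\to\fM_{0,2}^{\sst}$, and the map $r:\fM_{0,2}^{\sst}\to B\GG_m$ induced on each $\fA[\ell]$ by $(\lambda_0,\dots,\lambda_\ell)\mapsto\lambda_0$. Geometrically $r$ is the tangent line at $x_0$, and it is compatible with all $f_\alpha$ since $\alpha(0)=0$. These satisfy $r\circ s\simeq\id$. So the de Rham analogue of $\Gamma_c(\fM_{0,2}^{\sst},\omega)$ has $C_\bullet(B\GG_m)$ as a retract and cannot be $k$. No extra-degeneracy argument can fix this; the lemma is a coherent statement, not a topological one.
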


\begin{proof}
    Following the description and notations in Section~\ref{sec:ss-modif}, we have an isomorphism \[\fM_{0,2}^{\sst} \simeq \colim_{\Delta^{\surj,\op}} \fA[\ell]~,\] where $\fA[\ell]$ is as in Definition~\ref{def:A[l]}, and with the transition map $f_{\alpha} : \fA[\ell'] \to \fA[\ell]$ corresponding to a surjection $\alpha : [\ell] \onto [\ell']$ given by compositions of the open inclusions \[\dotsb \overset{\GG_m}{\times} \dotsb \simeq \dotsb \overset{\GG_m}{\times} \GG_m \overset{\GG_m}{\times} \dotsb \into \dotsb \overset{\GG_m}{\times} \AA^1 \overset{\GG_m}{\times} \dotsb~.\] From this description, we get a presentation \[\IndCoh(\fM_{0,2}^{\sst}) \simeq \lim_{\Delta^{\surj}} \IndCoh(\fA[\ell])~,\] where the transition maps are given by $f_{\alpha}^!$. The functor $f_{\alpha}^!$ does not admit a left adjoint. However, its extension to pro-categories does have a left adjoint \[f_{\alpha,!}^{\operatorname{pro}} : \operatorname{Pro}(\IndCoh(\fA[\ell'])) \leftrightarrows \operatorname{Pro}(\IndCoh(\fA[\ell])) : f_{\alpha}^{\operatorname{pro},!}~.\] In particular, we have maps \begin{equation} \label{eq:unit_maps}
        p_{\ell,!}f_{\ell}^!\cF \simeq p_{\ell',!}f_{\alpha,!}^{\operatorname{pro}}f_{\alpha}^{\operatorname{pro},!}f_{\ell'}^!\cF \to p_{\ell',!}f_{\ell'}^!\cF~,
    \end{equation} where $p_{\ell} : \fA[\ell] \to \pt$ is the projection, and $f_{\ell} : \fA[\ell] \to \fM_{0,2}^{\sst}$ is the natural inclusion. To prove the existence of $p^2_!$, it is enough to prove the existence of $p_{\ell,!}$ for each $\ell \geq 0$: Indeed, assuming all $p_{\ell,!}$ exist, we can define \[p_{!}^2\cF = \colim p_{\ell,!}f_{\ell}^!\cF~,\] where the transition maps are given by~\eqref{eq:unit_maps}. We then have \begin{align*}
        \hom_{\Vect}(p_!^2\cF,V) & \simeq \lim \hom_{\Vect}(p_{\ell,!}f_{\ell}^!\cF,V) \simeq \lim \hom_{\fA[\ell]}(f_{\ell}^!\cF,p_{\ell}^!V) \\
        & \simeq \lim \hom_{\fA[\ell]}(f_{\ell}^!\cF, f_{\ell}^!p^!\cF) \simeq \hom_{\fM_{0,2}^{\sst}}(\cF, p^!V)~,
    \end{align*} where the last isomorphism follows from the description of hom spaces in limit categories.  

    To define $p_{\alpha,!}$, note that $p_{\ell}$ factors as the composition \[\GG_m \backslash \AA^1 \overset{\GG_m}{\times} \dotsb \overset{\GG_m}{\times} \AA^1 / \GG_m \to \GG_m \backslash \AA^1 \overset{\GG_m}{\times} \dotsb \overset{\GG_m}{\times} \pt \to \dotsb \to \GG_m \backslash \AA^1 / \GG_m \to B\GG_m \to \pt~,\] namely, $\ell$ contractions of $\AA^1/\GG_m$ to a point, followed by the projection $B\GG_m \to \pt$. Thus, we need to define compactly supported global sections for $\AA^1/\GG_m$ and $B\GG_m$. 
    
    For $\AA^1/\GG_m$, we have an equivalence $\IndCoh(\AA^1/\GG_m) \simeq \operatorname{Fil}(\Vect)$ between $\GG_m$-equivariant $k[t]$-modules and filtered vector spaces, such that the pullback functor $p^! : \Vect \to \IndCoh(\AA^1/\GG_m)$ corresponds to the non-positive constant filtration \[V \mapsto (\dotsb \to 0 \to V \xrightarrow{\id} V \xrightarrow{\id} \dotsb)~.\] This functor preserves both limits and colimits, and in particular admits a left adjoint. Concretely, we have \[\hom_{\Vect}(p_!\cF,k) \simeq \hom_{\AA^1/\GG_m}(\cF, \omega)~,\] i.e., \[p_!\cF \simeq \hom_{\AA^1/\GG_m}(\cF,p^!k)^{\lor}~.\]
    
    Taking $\cF = \omega$, we get $p_!\omega = \hom(\omega,\omega)^{\lor}$. A graded $k[t]$-morphism $\omega_{\AA^1} \to \omega_{\AA^1}$ is necessarily a multiple of the identity, and so \[p_!\omega_{\AA^1/\GG_m} \simeq k~.\]
    
    As for $\Gamma(B\GG_m,--)$, we use the description $\IndCoh(B\GG_m) \simeq \operatorname{Rep}(\GG_m) = \operatorname{gr}(\Vect)$ as the category of graded vector spaces. Under this equivalence, we have $\omega_{B\GG_m} \rightsquigarrow k[-1]$ --- the ground field in cohomological degree $1$ and graded degree $0$. We can then define $\Gamma_c(B\GG_m, -)$ to be the map \[V_{\bullet} \mapsto V_0[1]~.\] Indeed, for any $W \in \Vect$, we have \[\hom_{\operatorname{gr}(\Vect)}(V_{\bullet}, W \otimes k[-1]) \simeq \hom_{\Vect}(V_0, W[-1]) \simeq \hom_{\Vect}(V_0[1],W)~.\]

    Finally, to compute $\Gamma_c(\fM_{0,2}^{\sst},\omega)$, note that $\omega_{\AA^1/\GG_m}$ is given by the non-positive filtered object $\dotsb \to 0 \to k \xrightarrow{\id} k \to \dotsb$, and so $\Gamma_c(\AA^1/\GG_m,\omega) = k$, and similarly $\Gamma_c(B\GG_m, \omega_{B\GG_m}) \simeq k[-1][1] = k$. From that, we get $\Gamma_c(\AA^{\ell}/\GG_m^{\ell+1},\omega) \simeq k$, and taking colimit over the weakly-contractible diagram $\Delta^{\surj,\op}$, we get \[\Gamma_c(\fM_{0,2}^{\sst},\omega) \simeq \colim_{\Delta^{\surj,\op}} k \simeq k~.\]
\end{proof}

\begin{lemma} \label{lem:tr-p2}
    The functors \begin{align*}
        p^{2+2,!} : \Vect & \to \IndCoh(\fM_{0,2+2}^{\sst,\land}) \\
        p^{2+2,\nd,!} : \Vect & \to \IndCoh(\fM_{0,2+2}^{\sst,\land,\nd}) \\
        p^{1+2,!} : \Vect & \to \IndCoh(\fM_{0,1+2}^{\sst,\land})
    \end{align*} admit left adjoints $\Gamma_c(\fM_{0,2+2}^{\sst,\land},-)$, $\Gamma_c(\fM_{0,2+2}^{\sst,\land,\nd},-)$, and $\Gamma_c(\fM_{0,1+2}^{\sst,\land},-)$ respectively, and we have 
    \begin{align*}
        \Gamma_c(\fM_{0,2+2}^{\sst,\land},\omega) & \simeq k[\![t]\!]^{\lor} \\
        \Gamma_c(\fM_{0,2+2}^{\sst,\nd},\omega) & \simeq k \\
        \Gamma_c(\fM_{0,1+2}^{\sst,\land},\omega) & \simeq k~. 
    \end{align*}
\end{lemma}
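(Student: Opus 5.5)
We follow the proof of Lemma~\ref{lem:cGamma02} essentially verbatim, replacing the presentation of $\fM_{0,2}^{\sst}$ by that of Proposition~\ref{prop:M0-ss-str}. We have
\[\fM_{0,2+2}^{\sst,\land} \simeq \colim_{\Delta^{\surj,\op}} \fA[\epsilon+\ell+\epsilon],\qquad \fM_{0,1+2}^{\sst,\land} \simeq \colim_{\Delta^{\surj,\op}} \fA[\ell+\epsilon],\]
and the transition maps are again compositions of open inclusions $\GG_m \subset \AA^1$ in the middle factors; the formal factors $\hat{\AA}^1$ at the ends are untouched. The strictly nodal locus $\fM_{0,2+2}^{\sst,\land,\nd}$ is the fiber over $0$ of the stable reduction map. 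In the chart $\fA[\epsilon+\ell+\epsilon]$ this map is the $\GG_m^{\ell+1}$-invariant $t_0t_1\dotsb t_{\ell+1}$. We will take the nodal locus to be the corresponding (derived) zero locus of this function.

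Exactly as before, the argument reduces to three steps:
\begin{enumerate}
\item Construct $p_{\ell,!}$ on each chart.
\item Use the pro-left adjoints of the open restrictions to build the transition maps~\eqref{eq:unit_maps}.
\item Set $p_!\cF=\colim_{\Delta^{\surj,\op}} p_{\ell,!}f_\ell^!\cF$ and verify adjointness via the limit description of $\IndCoh$.
\end{enumerate}
On a chart, $p_\ell$ factors through successive contractions. The inner $\AA^1/\GG_m$ factors are handled as in Lemma~\ref{lem:cGamma02}, and the final $B\GG_m\to\pt$ as there. The new ingredient is the end factors, which have the form $\hat{\AA}^1/\GG_m$ where one $\GG_m$ has been used up. After absorbing the adjacent torus, each such factor becomes either a plain formal disk $\hat{\AA}^1$, or a factor $\hat{\AA}^1\overset{\GG_m}{\times}$ glued to its neighbour.

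For $\fM_{0,1+2}^{\sst,\land}$ there is one such end. The key computation is
\[\Gamma_c(\hat{\AA}^1\overset{\GG_m}{\times}\AA^1/\GG_m,\omega).\]
We plan to reparametrize by the invariant product $s=t_0t_1$. Then $\fA[\epsilon+\ell]\to\AA^1/\GG_m$ records only the total smoothing parameter, and the end factor contributes $\Gamma_c(\hat{\AA}^1/\GG_m,\omega)$. By the same filtered/graded description, with $\GG_m$ acting with positive weight on the coordinate, this is $\hom(\omega,\omega)^{\lor}$ computed among graded $k[\![t]\!]$-modules. Only the degree-zero part survives, which gives $k$. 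Combining with $\Gamma_c(B\GG_m,\omega)=k$ and the other factors, each chart contributes $k$, and the colimit over the weakly contractible $\Delta^{\surj,\op}$ is $k$.

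For $\fM_{0,2+2}^{\sst,\land}$ there is no overall $B\GG_m$ that is not already absorbed into an end: the $\ell+1$ tori act on $\ell+2$ coordinates. We will therefore single out the invariant $t=t_0\dotsb t_{\ell+1}$, which is the pullback of the coordinate on $\bcM_{0,2+2}^{\land}\simeq\hat{\AA}^1$. Each chart is then a tower of $\AA^1/\GG_m$ contractions over $\Spf k[\![t]\!]$, relatively of the type already computed. Hence $p_{\ell,!}\omega\simeq \Gamma_c(\Spf k[\![t]\!],\omega)=k[\![t]\!]^{\lor}$, compatibly in $\ell$, and the colimit is again $k[\![t]\!]^{\lor}$. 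Base-changing to $t=0$ gives $k$ for the nodal locus. Alternatively, one can compute the nodal case directly as a chain of $\AA^1/\GG_m$'s and two ends of type $\{0\}$, which reduces it to Lemma~\ref{lem:cGamma02}.

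The main obstacle is the careful bookkeeping of the torus weights on the formal end coordinates, including whether the relevant graded pieces produce $k$ with no shift. The cleanest route is to first prove that the reduction map to $\hat{\AA}^1$ (respectively to $\AA^1/\GG_m$) has relative $\Gamma_c(\omega)\simeq\omega$. This is a relative version of Lemma~\ref{lem:cGamma02}, proved chart by chart with the same filtered-vector-space argument. The proof is then finished by the elementary computations $\Gamma_c(\Spf k[\![t]\!],\omega)=k[\![t]\!]^{\lor}$ and $\Gamma_c(\hat{\AA}^1/\GG_m,\omega)=k$.
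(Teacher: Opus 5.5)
The gap is in the $\fM_{0,2+2}^{\sst,\land}$ case, and your nodal case inherits it. You assert that, once $t=t_0\cdots t_{\ell+1}$ is singled out, each chart $\fA[\epsilon+\ell+\epsilon]$ is a tower of $\AA^1/\GG_m$-contractions over $\Spf k[\![t]\!]$. A relative form of the filtered-vector-space argument of Lemma~\ref{lem:cGamma02} would then give $\pi_{\ell,!}\omega\simeq\omega_{\hat{\AA}^1}$.

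This is false. An iterated $\AA^1/\GG_m$-fibration over $\hat{\AA}^1$ would be smooth over $\hat{\AA}^1$, but $t\colon\fA[\epsilon+\ell+\epsilon]\to\hat{\AA}^1$ is not smooth. Its fiber over $0$ is the normal-crossings locus $\{t_0\cdots t_{\ell+1}=0\}/\GG_m^{\ell+1}$; already for $\ell=0$ this is two formal branches $\{t_0t_1=0\}/\GG_m$ meeting in $B\GG_m$. So there is no $\operatorname{Fil}(\Vect)$-type model to run relatively, and the step you call the cleanest route has no proof as written.

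Your nodal alternative has a related error. A chain of $\AA^1/\GG_m$'s with two $\{0\}$-ends is $\fA[\ell]$, i.e.\ the locus $t_0=t_{\ell+1}=0$ (the image of $\fM_{0,2}^{\sst}$). This is strictly smaller than $\fM_{0,2+2}^{\sst,\land,\nd}$: for $\ell\geq 1$ the latter contains $\{t_1=0\}$, along which $t_0$ and $t_{\ell+1}$ are unconstrained formal coordinates. Your primary nodal route needs the base change $i^!\pi_!\simeq\pi^{\nd}_!i'^!$, which is not automatic for these left adjoints.

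In the $1+2$ case, $t_0t_1$ is not invariant. The characters of the $\ell+1$ coordinates of $\fA[\ell+\epsilon]$ form a basis of the character lattice, so $\fA[\ell+\epsilon]\simeq(\AA^1/\GG_m)^{\ell}\times\hat{\AA}^1/\GG_m$. It has no nonconstant invariant functions and no leftover $B\GG_m$, though your answer $k$ there is correct.

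The fix, which is the paper's argument, is to apply the computation you already did for $\hat{\AA}^1/\GG_m$ to the whole chart.

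For existence, contract the formal ends first. The projection $\fA[\epsilon+\ell+\epsilon]\to\fA[\ell]$ forgetting $t_0,t_{\ell+1}$ is ind-proper, so its pushforward is left adjoint to $!$-pullback. Lemma~\ref{lem:cGamma02} then handles $\fA[\ell]\to\pt$.

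For the value, $\hom(\Gamma_c(\fA[\epsilon+\ell+\epsilon],\omega),k)\simeq\hom(\omega,\omega)$ is the weight-zero part of $k[\![t_0]\!][t_1,\dots,t_\ell][\![t_{\ell+1}]\!]$. A monomial $t_0^{m_0}\cdots t_{\ell+1}^{m_{\ell+1}}$ has weight $(m_1-m_0,\dots,m_{\ell+1}-m_\ell)$, so weight zero forces all exponents to be equal, giving $k[\![t_0\cdots t_{\ell+1}]\!]$. The transition maps preserve this invariant; your remark that it is pulled back from $\bcM_{0,2+2}^{\land}$ is exactly why. Hence the $\Delta^{\surj,\op}$-diagram is constant and the colimit is $k[\![t]\!]^{\lor}$.

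On the nodal locus the weight-zero functions are $k[\![t]\!]/(t)=k$, and on $\fA[\ell+\epsilon]$ they are just $k$. Both remaining cases therefore give $k$ directly, with no base change or comparison to $\fM_{0,2}^{\sst}$ needed.
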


\begin{proof}
    Using again the conventions of Section~\ref{sec:ss-modif}, we can write \begin{equation} \label{eq:M022-colim}
        \begin{split}
        \fM_{0,2+2}^{\sst,\land} & \simeq \colim_{\Delta^{\surj,\op}} \fA[\epsilon + \ell + \epsilon]~,\\
        \end{split}
    \end{equation} and the substack $\fM_{0,2+2}^{\sst,\nd}$ is given by the colimit over the substacks of $\fA[\epsilon + \ell + \epsilon]$ of tuples $(t_0,\dots,t_{\ell+1})$ where at least one $t_i$ is zero, while the substack $\fM_{0,1+2}^{\sst,\land}$ is a colimit over such tuples where $t_0 = t_{\infty} = 0$. The projections to a point factor as \begin{align*}
        \hat{\AA}^1 \overset{\GG_m}{\times} \AA^1 \overset{\GG_m}{\times} \dotsb \overset{\GG_m}{\times} \AA^1 \overset{\GG_m}{\times} \hat{\AA}^1 \to \GG_m \backslash \AA^1 \overset{\GG_m}{\times} \dotsb \overset{\GG_m}{\times} \AA^1 / \GG_m \to \pt ~.
    \end{align*} The first map is proper, so $*$-pushforward along it is left adjoint to $!$-pullback. Combining with Lemma~\ref{lem:cGamma02}, we get the existence of the compactly supported homology functors. 

    Next, note that \[\hom_{\Vect}(\Gamma_c(\fA[\epsilon + \ell + \epsilon], \omega_{\fA[\epsilon + \ell + \epsilon]}),k) \simeq \hom_{\fA[\epsilon + \ell + \epsilon]}(\omega_{\fA[\epsilon + \ell + \epsilon]},\omega_{\fA[\epsilon + \ell + \epsilon]})~,\] and the latter is given by $\ZZ_m^{\ell+1}$-graded $k[[t_0]\!][t_1,\dots,t_{\ell}][\![t_{\ell+1}]\!]$-linear morphisms \[k[\![t_0]\!][t_1,\dots,t_{\ell}][\![t_{\ell+1}]\!]dt_0\dotsb dt_{\ell+1} \to k[\![t_0]\!][t_1,\dots,t_{\ell}][\![t_{\ell+1}]\!]dt_0\dotsb dt_{\ell+1}~,\] with $t_0^{m_0}t_1^{m_1}\dotsb t_{\ell+1}^{m_{\ell+1}}$ in degree $(m_1 - m_0, \dots, m_{\ell+1} - m_{\ell})$. Such a morphism is determined by its value on $dt_0\dotsb dt_{\ell+1}$, whose image should be an element of degree $(0,\dots,0)$. Such an element is necessarily of the form $\alpha \cdot (t_0 \dotsb t_{\ell+1})^{m}dt_0\dotsb dt_{\ell+1}$ for $\alpha \in k$ and $m \geq 0$, and so \[\hom_{\Vect}(\Gamma_c(\fA[\epsilon + \ell + \epsilon], \omega_{\fA[\epsilon + \ell + \epsilon]}),k) \simeq k[\![t_0\dotsb t_{\ell}]\!]^{\lor}~.\] The transition maps in~\eqref{eq:M022-colim} corresponds to the map \[\alpha \cdot (t_0\dotsb t_{i-1}t_{i+1}\dotsb t_{\ell+1})^m \mapsto \alpha \cdot (t_0\dotsb t_{i-1}t_it_{i+1}\dotsb t_{\ell+1})^m\] so the diagram is constant, and we get \[\Gamma_c(\fM_{0,2+2}^{\sst,\land},\omega) \simeq k[\![t]\!]^{\lor}~.\]
    
    The restrictions to $\fM_{0,2+2}^{\sst,\nd}$ and $\fM_{0,1+2}^{\sst,\land}$ both correspond to the case $m=0$, and thus \[\Gamma_c(\fM_{0,1+2}^{\sst,\land},\omega) \simeq \Gamma_c(\fM_{0,2+2}^{\sst,\nd},\omega) \simeq k~.\]
\end{proof}

\begin{notation}
    For $(X,x_1,\dots,x_n) \in \fM_{g,n}^{\sst}(R)$, denote \[\Ran^{\st}((X,x_1,\dots,x_n)/R) \coloneqq \bcM_{g,n,\Ran} \times_{\fM_{g,n}^{\sst}} \Spec R~.\] This is the subspace of $\Ran(X^{\sm} \backslash \{x_1,\dots,x_n\})$ consisting of configurations such that each irreducible component of genus $h$ with $m$ nodes contains at least $2-m-2h$ points.
\end{notation}

\begin{lemma} \label{lem:tr-q2}
    The trace maps \begin{align*}
        q_!^{2+2}\omega_{\bcM_{0,2+2,\Ran}^{\land}} & \to \omega_{\fM_{0,2+2}^{\sst,\land}} \\
        q_!^{2+2,\nd}\omega_{\bcM_{0,2+2,\Ran}^{\land,\nd}} & \to \omega_{\fM_{0,2+2}^{\sst,\land}} \\
        q_!^{1+2}\omega_{\bcM_{0,1+2,\Ran}^{\land}} & \to \omega_{\fM_{0,1+2}^{\sst,\land}} \\
    \end{align*} are isomorphisms.
\end{lemma}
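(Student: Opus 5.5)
The plan is to check the statement fiberwise over the base stacks. By base change for $!$-pushforward along the de Rham–type fibers of $q$, the fiber of $q^{2+2}_!\omega$ at an $R$-point $(X,x_0,x_1,x_2,x_3)$ of $\fM_{0,2+2}^{\sst,\land}$ is $\Gamma_{\dR,c}(\Ran^{\st}((X,\overline{x})/R),\omega)$, and similarly for $q^{1+2}$ and the nodal variant. The latter is literally the base change of the former along the closed embedding of $\fM_{0,2+2}^{\sst,\land,\nd}$. So it suffices to show the following: for each semistable two-pointed chain $X$ (possibly with formal smoothing of the end nodes), the trace map from the stable Ran space of $X$ is an isomorphism. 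Since the construction is compatible with the étale atlases $\fA[\epsilon+\ell+\epsilon]$ of Proposition~\ref{prop:M0-ss-str} and with the transition maps, we may work $\GG_m^{\ell+1}$-equivariantly over each $\overline{W}[\epsilon+\ell+\epsilon]$, and the statement becomes local on the base. Both sides are compatible with nilpotent thickenings, since $\Ran$ only sees reduced points. We may therefore reduce to $k$-points, i.e.\ to a fixed curve $X=\PP^1\lor\dotsb\lor\PP^1$ with $m$ components and punctured smooth locus $\mathring{X}=\bigsqcup_{j=1}^m \GG_m$.

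For a fixed such curve, $\Ran^{\st}(X)$ is the open subspace of $\Ran(\mathring{X})$ of subsets meeting every component. Since $\Ran(\bigsqcup_j C_j)$ is the product over $j$ of the unital Ran spaces $\Ran(C_j)\sqcup\{\emptyset\}$, with the empty configuration removed overall, the stable locus is exactly $\prod_{j=1}^m \Ran(\GG_m)$. Künneth for compactly supported de Rham homology of $\omega$ then reduces us to showing $\Gamma_{\dR,c}(\Ran\GG_m,\omega)\simeq k$. This is the Beilinson–Drinfeld contractibility of the Ran space of a connected variety. The argument of~\cite[Proposition~4.3.3]{BD} uses only the union map and connectedness, not properness, and it applies verbatim to $\GG_m$. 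Compactly supported homology is well-defined here because the $n$-point strata are the proper Losev–Manin spaces after the $\GG_m$-quotient. Alternatively, one applies it inside the compactification and notes that the fibers remain de Rham.

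The main obstacle is the passage from pointwise statements to the family, where components degenerate. Over the locus where $t_i\neq0$, two components merge into one $\GG_m$, and the product decomposition changes. Fiberwise isomorphism plus base change gives the result only if $q_!$ genuinely commutes with $!$-restriction to points of the stack, including the non-reduced formal directions $t_0,t_{\ell+1}$. I would handle this by noting that $q$ is a colimit over $I$ of the maps $\bcM_{0,2+2,I}\to\fM^{\sst}$. These maps are proper, being relative Losev–Manin/Hassett spaces, after passing to relative de Rham. Proper base change therefore holds for each $I$, and colimits commute with $!$-restriction. Conservativity of $!$-restrictions to field-valued points on $\IndCoh$ of these laft stacks is where the care is needed. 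For that step I would instead argue with the explicit equivariant charts of Lemma~\ref{lem:cover-sm-punct}, where the $I$-point stable configuration space is a smooth proper family over $\AA^{\ell}$. There the trace map can be checked on the dense torus stratum and on each coordinate stratum by the contractibility argument above.
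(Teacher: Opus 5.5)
Your main line is the paper's proof. You reduce to geometric points of the base and their nilpotent thickenings. The paper checks on $k$-points and tangent vectors, using exactly your observation that, since $\Ran$ only sees the reduced locus, the fiber over $\Spec k[\eta]$ splits as $\Ran^{\st}(X,x_0,x_\infty)\times\Spec k[\eta]$. You then identify the fiber over a $k$-point $(\PP^1,0,1,\infty)\lor(X,x_0,x_\infty)\lor(\PP^1,0,1,\infty)$ with $(\Ran\GG_m)^c$, and conclude by Beilinson--Drinfeld contractibility, which indeed needs only connectedness.

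One claim in your last paragraph is false and should be dropped: the maps $\bcM_{0,2+2,I}\to\fM_{0,2+2}^{\sst}$ are not proper, even after passing to relative de Rham stacks. Only the total space $\bcM_{0,2+2,I}$ is proper, and the target stack is non-separated. The fiber over a $k$-point is the space of stable $I$-configurations on a fixed curve. This is a disjoint union of products of tori $\prod_j\GG_m^{n_j}$, with no $\GG_m$-quotient, so it is not a Losev--Manin space. Hence proper base change is not available to justify commuting $q_!$ with $!$-restriction to points. Likewise, $\Gamma_{\dR,c}$ of these fibers exists because $\omega$ is holonomic, not because of Losev--Manin properness.

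The reduction to points can be justified along the lines of your stratum-by-stratum alternative, which is the right kind of fix. First, base change to formal neighbourhoods can come, e.g., from $\QCoh$-linearity of $q_!$ over an affine chart. Second, $q$ splits as a product over the formal neighbourhood of a point, again because $\Ran$ only sees reduced data. Third, conservativity comes from the fact that $\fA[\epsilon+\ell+\epsilon]$ has finitely many torus-orbit strata, each a classifying stack. The paper does not address this point; it simply asserts that checking on geometric points and tangent spaces suffices.
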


\begin{proof}
    Since $\fM_{0,2+2}^{\sst,\land}$ is a formal completion of a finite type algebraic stack, it is enough to prove the map is an isomorphism on geometric points and on tangent spaces. A $k$-point $f : \Spec k \to \fM_{0,2+2}^{\sst,\land}$ is necessarily of the form \[(\PP^1,0,1,\infty) \lor (X,x_0,x_{\infty}) \lor (\PP^1,0,1,\infty)~,\] and the fiber $\Ran^{\st}(X,x_0,x_\infty)$ is isomorphic to $(\Ran \GG_m)^{c}$ where $c$ is the number of irreducible components in $X$. Thus, the result follows from contractibility of Ran spaces of connected curves proven in~\cite[Proposition~4.3.3]{BD}. Now given a square zero extension $k[\eta] \to k$ and an extension of $f$ to a map $f^{\eta} : k[\eta] \to \fM_{0,2+2}^{\sst,\land}$, the fiber splits as a product \[\bcM_{0,2+2,\Ran}^{\land} \times_{\fM_{0,2+2}^{\sst,\land}} \Spec k[\eta] \simeq \Ran^{\st}(X,x_0,x_\infty) \times \Spec k[\eta]\] (since the fibers of $q$ depend only on the reduce locus). In particular, \[f^{\eta,!}q_!\omega_{\bcM_{0,2+2,\Ran}^{\land}} \simeq \omega_{k[\eta]} \otimes f^!q_!\omega_{\bcM_{0,2+2,\Ran}^{\land}} \iso \omega_{k[\eta]}~.\] 

    The proof for $q^{2+2,\nd},q^{1+2}$ is similar.
\end{proof}

\begin{proof} [Proof of Proposition~\ref{prop:Z-unit}]
    Follows by combining Lemma~\ref{lem:tr-p2} and Lemma~\ref{lem:tr-q2}.
\end{proof}

\begin{corollary} \label{cor:int-unit}
    For an $n$-pointed stable curve $(X,x_1,\dots,x_n)$, the trace map \[\int_{X \backslash \{x_1,\dots,x_n\}} \pmb{1} \to k\] is an isomorphism.
\end{corollary}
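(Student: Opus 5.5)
We prove Corollary~\ref{cor:int-unit} by factoring the projection $\bcM_{g,n,\Ran}\to\pt$ through the relevant spaces and reducing to the local computations already done.

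By Definition~\ref{def:fact-hom}, $\int_{X\backslash\{\overline{x}\}}\pmb{1}$ is the $!$-fiber at $(X,\overline{x})$ of
\[q^{g,n}_!p^{g,n}_!\omega_{\bcM_{g,n,\Ran}},\]
where the composite is the pushforward along $\bcM_{g,n,\Ran}\to\bcM_{g,n}$. This map factors through $\fM_{g,n}^{\sst}$:
\[\bcM_{g,n,\Ran}\xrightarrow{\,r\,}\fM_{g,n}^{\sst}\xrightarrow{\,s\,}\bcM_{g,n}.\]
By base change along $\Spec k\to\bcM_{g,n}$, the fiber equals compactly supported sections of $\omega$ over $\bcM_{(X,\overline{x}),\Ran}$. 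This in turn factors through $\fM_{(X,\overline{x})}^{\sst}$. We handle the two stages separately, exactly as in the proof of Proposition~\ref{prop:Z-unit}.

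\emph{First stage: the trace $r_!\omega\to\omega_{\fM^{\sst}_{(X,\overline{x})}}$ is an isomorphism.} We argue as in Lemma~\ref{lem:tr-q2}. Since $\fM^{\sst}_{(X,\overline{x})}$ is of finite type (a formal completion at worst), it suffices to check geometric points and first-order thickenings. Over a $k$-point $X'$, the fiber is $\Ran^{\st}(X')$. Because the Ran fibers only see the reduced locus, the stability condition says that each rational bubble in the inserted chains must contain at least one point, while components of the original curve impose no condition. Hence the fiber is a product over irreducible components $C$ of
\[\Ran\bigl(C^{\sm}\backslash(\text{marked points and nodes})\bigr),\]
each a Ran space of a connected curve. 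By \cite[Proposition~4.3.3]{BD} together with the Künneth formula, the trace to $k$ is an isomorphism. The square-zero extension step is verbatim as in Lemma~\ref{lem:tr-q2}.

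\emph{Second stage: $\Gamma_c(\fM^{\sst}_{(X,\overline{x})},\omega)\simeq k$.} Proposition~\ref{prop:local-str-ss-modif} and its nodal analogue give the local structure of the semistable modification stack:
\[\fM^{\sst}_{(X,\overline{x})}\simeq\prod_{x_i}\fM_{0,2+1}^{\sst,\land}\times\prod_{\text{nodes }p}\fM_{0,2+2}^{\sst,\land,\nd}.\]
Here each marked point contributes a puncture-modification factor and each node a strictly nodal node-modification factor; the node is not smoothed because we work over a fixed point of $\bcM_{g,n}$. Lemma~\ref{lem:tr-p2} gives compactly supported homology $k$ for each factor, so Künneth yields $k$.

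The main obstacle is justifying this global product decomposition. The relevant results are stated formally locally, and for a single node only in a one-parameter smoothing family. I would first obtain the decomposition of $\fM_{g,n}^{\sst}\times_{\bcM_{g,n}}\Spec k$ directly from the description of semistable curves with fixed stabilization. Such a curve is obtained by inserting independent rational chains at each marked point and each node, with automorphisms acting independently, following \cite{ACFW}. I would then check that the inserted data at the distinct special points do not interact.

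If an exact product is not available, the fallback is to use the colimit presentations by the $\fA[\ell]$'s for each special point and run the argument of Lemma~\ref{lem:cGamma02} on the product diagram over $(\Delta^{\surj,\op})^{n+\#\text{nodes}}$. That diagram is still weakly contractible, and each term has $\Gamma_c(\omega)=k$.
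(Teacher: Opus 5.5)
There is a genuine gap: both of your stages rest on misidentifications, and the two errors do not cancel.

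\textbf{First stage.} You correctly observe that the strict transforms of components of $X$ impose no stability condition. It follows that such a component may carry no points at all, so the fiber of $r$ over a modification with at least one bubble is not $\prod_C \Ran(C^{\circ})$. The Ran spaces here are non-unital (colimits over $\fSet^{\surj,\op}$), so the factor for an original component is $\Ran(C^{\circ})\sqcup\{\emptyset\}$, with $\{\emptyset\}$ a disjoint point. For example, take $(X,x)$ smooth of genus $g\geq 1$ and $X'=X\cup_x\PP^1$. The fiber is then $\Ran(\GG_m)\times\bigl(\Ran(X\backslash x)\sqcup\{\emptyset\}\bigr)$, whose $\Gamma_c(\omega)$ is $k\oplus k$. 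So the trace $r_!\omega\to\omega$ is not an isomorphism.

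\textbf{Second stage.} Your product decomposition uses the formally completed stacks $\fM^{\sst,\land}_{0,2+1}$ and $\fM^{\sst,\land,\nd}_{0,2+2}$. Their reduced points always have at least one bubble at the special point. But $\fM^{\sst}_{(X,\overline{x})}$ also contains the trivial modification $X$ and modifications bubbled at only some of the special points. Indeed, Proposition~\ref{prop:local-str-ss-modif}(1) gives $\fM^{\sst}_{(X,x)}\simeq\fM^{\sst}_{0,2+1}$, uncompleted. The completed objects appear in that proposition only as fibers of the Ran-level map $q\colon \bcM_{(X,\overline{x}),\Ran}\to\Ran X$ over points of $\Ran X$ that contain a special point.

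\textbf{The missing idea.} The paper factors through $\Ran X$, not through $\fM^{\sst}_{(X,\overline{x})}$. The image of a configuration in $\Ran X$ records exactly which special points have configuration points on bubbles over them. This automatically separates the bubble data from whether the original components carry points, which is precisely what breaks your first stage. The map $q$ is an isomorphism over smooth unmarked points. Its fibers are $\bcM^{\land}_{0,1+2,\Ran}$ over marked points and $\bcM^{\land,\nd}_{0,2+2,\Ran}$ over nodes, each with $\Gamma_c(\omega)\simeq k$ by Proposition~\ref{prop:Z-unit}. Hence $q_!\omega\simeq\omega_{\Ran X}$, and contractibility of $\Ran X$ for the connected nodal curve $X$ finishes the proof. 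To salvage your route you would have to stratify by this same data, which amounts to redoing the paper's argument.
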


\begin{proof}
    By Definition~\ref{def:fact-hom}, $\int_{X \backslash \{x_1,\dots,x_n\}} \pmb{1}$ is given by $!$-pushforward of the dualizing sheaf along the composition \[\bcM_{(X,x_1,\dots,x_n)} \xrightarrow{q} \Ran X \xrightarrow{p} \pt~,\] where $q$ is an isomorphism over smooth unmarked points, its fibers over nodes are isomorphic to $\bcM_{0,2+2,\Ran}^{\land,\nd}$, and its fibers over marked points are isomorphic to $\bcM_{0,1+2,\Ran}^{\land}$. In particular, by Proposition~\ref{prop:Z-unit}, the trace map $q_!\omega_{\bcM_{(X,x_1,\dots,x_n),\Ran}} \to \omega_{\Ran X}$ is an isomorphism. The result then follows by contractibility of $\Ran X$.
\end{proof}

\section{Computation of the zeroth homology} \label{sec:ss}

\subsection{Filtrations and construction of the spectral sequence} 

In this section, we define a filtration on the smoothing module $\tilde{\fZ}_{\cA}$ (Definition~\ref{def:smoothing-module}) coming from a combination of the cardinality filtration and the filtration by number of irreducible components on the Ran space of node-modifications $\bcM_{0,2+2,\Ran}^{\land}$ (Definition~\ref{def:M02plus1Ran}). This will result in  a spectral sequence computing the homology groups $H_*\tilde{\fZ}_{\cA} = H^{-*}\tilde{\fZ}_{\cA}$. By embedding the right and left Ran spaces $\bcM_{0,2+1,\Ran}^{\land}$ and $\bcM_{0,1+2,\Ran}^{\land}$ of puncture-modifications inside $\bcM_{0,2+2,\Ran}^{\land}$, we will obtain a description of the homology groups of the left and right puncture modules $\fZ_{\cA}^+$ and $\fZ_{\cA}^-$. By restricting to the strictly nodal locus $\bcM_{0,2+2,\Ran}^{\land,\nd}$ we will obtain a description of the homology of the node module $\fZ_{\cA}$, and by embedding $\bcM_{0,2,\Ran}$ inside $\bcM_{0,2+2,\Ran}^{\land}$ a description of the homology of the chiral Zhu algebra $\fZ_{\cA}^0$.

\begin{definition}
  For $p\geq 0$, let \[\jmath^{[\geq p]} : \bcM_{0,2+2,\Ran}^{[\geq p],\land} \into \bcM_{0,2+2,\Ran}^{\land}\] be the open subspace where the difference between numbers of weight-zero points and irreducible components is at least $p$, namely its $R$-points for a $k$-algebra $R$ are given by configurations $(X,x_0,x_1,x_{\infty-1},x_\infty,\{y_i^{\red}\}_{i \in I})$ such that $|I| \geq p + c - 2$, where $c$ is the number of irreducible components in $X_{R^{\red}}$. Let \[\hat{\imath}^{[p]} : \bcM_{0,2+2,\Ran}^{[p],\land} \into \bcM_{0,2+2,\Ran}^{[\geq p],\land}\] be the formal completion of $\bcM_{0,2+2,\Ran}^{[\geq p],\land} \backslash \bcM_{0,2+2,\Ran}^{[>p],\land}$ inside $\bcM_{0,2+2,\Ran}^{[\geq p]}$, namely configurations as above such that $|I| = p + c - 2$.
\end{definition}

Note that by stability, $\bcM_{0,2+2,\Ran}^{[\leq p],\land} = \emptyset$ for $p < 0$.

\begin{notation}
    For a tuple of positive integers $(n_1,\dots,n_{\ell})$, write \[S_{(n_1,\dots,n_{\ell})} = S_{n_1} \times \dotsb \times S_{n_{\ell}} \subset S_{n_1 + \dotsb + n_{\ell}}\] where $S_{n}$ denotes the symmetric group on a set of $n$ elements.
\end{notation}

\begin{definition}
    For a tuple of positive integers $(n_1,\dots,n_{\ell})$ with $\sum_{i=1}^{\ell} n_i = n$, let \[\bcM_{0,2,(n_1,\dots,n_{\ell})} \subset \bcM_{0,2,n}\] be the subscheme classifying configurations $(X,x_0,x_\infty,y_1,\dots,y_{n})$ such that $X$ has exactly $\ell$ irreducible components, and the points $y_{\sum_{i=1}^j n_i}, \dots, y_{\sum_{i=1}^{j+1}}$ are supported on the $j$-th irreducible component. Note that \begin{equation*}
        \bcM_{0,2,(n_1,\dots,n_{\ell})} \simeq \GG_m^{n_1} / \GG_m \times \dotsb \times \GG_m^{n_{\ell}} / \GG_m 
    \end{equation*} where the $i$-th component is the moduli of $n_i$ unmarked points on a smooth two-pointed genus zero curve.

    Let \[(\bcM_{0,2,(n_1,\dots,n_{\ell})})_{\dR/} = (\bcM_{0,2,(n_1,\dots,n_{\ell})})_{\dR} \times_{(\fM_{0,2}^{\sst})_{\dR}} \fM_{0,2}^{\sst}\] be the corresponding relative de Rham stack, and $(\bcM_{0,2+2,(n_1,\dots,n_{\ell})})_{\dR/}^{\land}$ the formal completion of $\bcM_{0,2+2,\Ran}$ along the closed embedding \[(\PP^1,0,1,\infty) \lor (-) \lor (\PP^1,0,1,\infty) : (\bcM_{0,2,(n_1,\dots,n_{\ell})})_{\dR/} \to \bcM_{0,2+2,\Ran}\]
\end{definition}

\begin{lemma} \label{lem:decomp}
    For each $p\geq 0$, there is a decomposition \begin{align*}
        \bcM_{0,2+2,\Ran}^{[p],\land} & \simeq \coprod_{\ell > 0} \coprod_{\substack{n_i > 0 \\ \sum n_i = p + \ell}} (\bcM^{\disj}_{0,2+2,(n_1,\dots,n_{\ell})})_{\dR/}^{\land} / S_{(n_1,\dots,n_{\ell})}
    \end{align*} where $(\bcM^{\disj}_{0,2+2,(n_1,\dots,n_{\ell})})_{\dR/}^{\land} \subset (\bcM_{0,2+2,(n_1,\dots,n_{\ell})})_{\dR/}^{\land}$ is the open substack where all marked points are disjoint, and the action of $S_{(n_1,\dots,n_{\ell})}$ is given by permuting the marked points inside each component. 
\end{lemma}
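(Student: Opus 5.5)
The plan is to stratify $\bcM_{0,2+2,\Ran}^{[p],\land}$ by two discrete invariants of the reduced configuration: the number $\ell$ of irreducible components of the middle chain $X$ in the decomposition $(\PP^1,0,1,\infty) \lor (X,x_0,x_\infty) \lor (\PP^1,0,1,\infty)$, and the ordered tuple $(n_1,\dots,n_\ell)$ of cardinalities of weight-$\epsilon$ points on these components. Since the Ran space depends only on the reduced points $\{y_i^{\red}\}$, both invariants are constant on connected components of the underlying reduced prestack. It therefore suffices to show three things: each stratum is open and closed in the layer $[p]$; the stratum is identified with the formally completed relative de Rham stack; and the Ran-space colimit over diagonals collapses to the quotient of the disjoint locus by $S_{(n_1,\dots,n_\ell)}$.

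First I count. On the reduced locus of $\bcM_{0,2+2,\Ran}^{[p],\land}$ the total curve has $c=\ell+2$ components, so the condition $|I|=p+c-2$ reads $\sum n_i = p+\ell$. Stability forces $n_i\ge 1$ on each middle component, as recorded in the notation on $\Ran^{\st}$, which gives the index set of the coproduct. Next I need the strata to be disjoint and clopen inside the layer. Specializing (colliding points, or sprouting a new component) changes the pair $(|I|,c)$. Colliding two points lowers $|I|$ by one with $c$ fixed, so it leaves $[\ge p]$ entirely. Sprouting a new component carrying $m\ge 1$ points leaves $|I|$ fixed and raises $c$, which changes $|I|-c$. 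Within the locus where $|I|-c$ is fixed, no such specialization stays inside, so $\ell$ and the multiset structure are locally constant. Hence the layer $\bcM^{[p]}$, being defined as the formal completion of the closed complement $\bcM^{[\ge p]}\setminus \bcM^{[>p]}$, has reduced locus equal to the disjoint union of the reduced strata.

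For the identification of each piece, I use Proposition~\ref{prop:M0-ss-str} and Lemma~\ref{lem:cover-sm-punct}. Over the locus of chains with exactly $\ell$ components, the punctured smooth curve is locally $\GG_m$ on each component. A configuration of $p+\ell$ distinct reduced points with $n_i$ on the $i$-th component is then a point of $\prod_i \GG_m^{n_i}$ with the ordering forgotten within each component. Dividing by the rotation group $\GG_m^\ell$ gives $\bcM_{0,2,(n_1,\dots,n_\ell)}^{\disj}/S_{(n_1,\dots,n_\ell)}$. In the Ran colimit over $\fSet^{\surj,\op}$, the requirement $|I|=$ the number of distinct points means only the disjoint locus contributes, and the colimit over surjections onto a fixed cardinality is the quotient by $S_n$. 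Since points can only move within their component, $S_n$ reduces to $S_{(n_1,\dots,n_\ell)}$. The nonreduced directions have two sources. The points are de Rham, which produces $(-)_{\dR/}$ relative to $\fM_{0,2}^{\sst}$. The curve itself can deform, both by smoothing nodes of the middle chain and by the $\hat{\AA}^1$ directions at the two outer nodes; this produces the completion $(-)^{\land}$ of $\bcM_{0,2+2,\Ran}$ along $(\PP^1,0,1,\infty)\lor(-)\lor(\PP^1,0,1,\infty)$.

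I expect the main obstacle to be the claim that the formal completion of the layer is the disjoint union of the formal completions of the strata, rather than something gluing them infinitesimally. Smoothing a node of the middle chain is an infinitesimal deformation that merges two components, which naively connects strata of different $\ell$. My first attempt will be to argue that the formal completion depends only on a reduced closed subset together with the ambient space. Since the reduced strata are pairwise disjoint closed subsets, the completion along their union is the coproduct of the completions. Each of those completions is, by definition, the completion of the ambient space along the corresponding stratum, which is exactly $(\bcM^{\disj}_{0,2+2,(n_1,\dots,n_\ell)})^{\land}_{\dR/}$. Node-smoothing directions are then included inside each factor rather than connecting factors.
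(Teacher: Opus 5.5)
Your proposal is correct and follows the paper's route. The paper's proof is just the observation that near any $k$-point the number of distinct weight-$\epsilon$ points cannot decrease and the number of components cannot increase, so fixing their difference forces both, and hence $(n_1,\dots,n_\ell)$, to be constant on connected components of $\Spec R$; this is exactly your specialization analysis of collisions and sprouting. Your extra remarks (that the completion along disjoint reduced strata is a coproduct because the invariants depend only on $R^{\red}$, and the passage to the $S_{(n_1,\dots,n_\ell)}$-quotient on the disjoint locus) spell out what the paper leaves implicit. Just note that the completion along the unordered stratum is the $S_{(n_1,\dots,n_\ell)}$-quotient of $(\bcM^{\disj}_{0,2+2,(n_1,\dots,n_\ell)})^{\land}_{\dR/}$, not that space itself, as your last paragraph momentarily says.
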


\begin{proof}
    Given an $R$-family $(X,x_0,x_1,x_{\infty-1},x_{\infty}, \{y_i^{\red}\}_{i \in I})$ of points in $\bcM_{0,2+2,\Ran}^{[p],\land}$, the number of disjoint weight-$\epsilon$ sections does not decrease in an open neighborhood of any $k$-point $\Spec k \to \Spec R$, while the number of irreducible components does not increase in an open neighborhood of any $k$-point. Thus, since their difference is constant and equal to $p$ by assumption, each of them is constant on connected components of $\Spec R$.
\end{proof}

We denote by \[c_{(n_1,\dots,n_{\ell})} : (\bcM_{0,2+2,(n_1,\dots,n_{\ell})}^{\disj})^{\land}_{\dR/} \to \bcM_{0,2+2,\Ran}^{[p],\land}\]
the quotient by $S_{(n_1,\dots,n_{\ell})}$ followed by the inclusion.

\begin{lemma} \label{lem:description_of_graded_pieces}
    For each tuple $(n_1,\dots,n_{\ell})$ of positive integers, the family \[(\bcM_{0,2+2,(n_1,\dots,n_{\ell})})_{\dR/}^{\land} \to \fM_{0,2+2}^{\sst}\] is isomorphic to the $\GG_m^{\ell}$-quotient of the trivial $\hat{\AA}^{\ell+1}$-family \[\hat{\AA}^{\ell+1} \times \prod_{i=1}^{\ell} \GG_{m,\dR}^{n_i} \to \hat{\AA}^{\ell+1}\] followed by the inclusion \[\hat{\AA}^{\ell+1}/\GG_m^{\ell} \to \fM_{0,2+2}^{\sst}\] The restriction to the disjoint locus is given by the $\GG_m^{\ell}$-quotient of the trivial $\hat{\AA}^{\ell+1}$-family\[\hat{\AA}^{\ell+1} \times \prod_{i=1}^{\ell} (\GG_m^{n_i} \backslash \Delta)_{\dR} \to (\bcM^{\disj}_{0,2+2,(n_1,\dots,n_{\ell})})_{\dR/}^{\land}\] Here $\hat{\AA}^{\ell+1}$ denotes the formal completion of $\AA^{\ell+1}$ at $0$, and $\Delta \subset \GG_m^{n_i}$ the union of all diagonal divisors.
\end{lemma}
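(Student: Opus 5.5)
We plan to use the explicit charts of Proposition~\ref{prop:M0-ss-str} and Lemma~\ref{lem:cover-sm-punct}. A configuration in $(\bcM_{0,2+2,(n_1,\dots,n_\ell)})_{\dR/}^{\land}$ lies, on its reduced locus, over the closed point of $\fM_{0,2+2}^{\sst,\land}$ given by $(\PP^1,0,1,\infty)\lor X_\ell \lor(\PP^1,0,1,\infty)$, where $X_\ell$ is a chain of exactly $\ell$ components. So the first step is to identify the formal neighbourhood of this point inside $\fM_{0,2+2}^{\sst,\land}$. In the presentation $\fA[\epsilon+(\ell-1)+\epsilon]=(\hat{\AA}^1\times\AA^{\ell-1}\times\hat{\AA}^1)/\GG_m^{\ell}$, the curve $X_\ell$ sits over the origin, where all $\ell+1$ node parameters vanish. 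Its formal completion is therefore $\hat{\AA}^{\ell+1}/\GG_m^{\ell}$, and the composite $\hat{\AA}^{\ell+1}/\GG_m^{\ell}\to\fA[\epsilon+(\ell-1)+\epsilon]\to\fM_{0,2+2}^{\sst,\land}$ is the claimed inclusion. This uses only that the cover in Proposition~\ref{prop:M0-ss-str} is \'etale, so that formal completions agree.

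The second step is to trivialize the family of punctured smooth loci over $\hat{\AA}^{\ell+1}$, before taking the quotient. By Lemma~\ref{lem:cover-sm-punct}, $\mathring{\fX}$ restricted to this chart is covered by the opens $U^{i,i+1}$, each isomorphic to $\GG_m\times(\text{base})$ with fibre coordinate $x_i$. However, configurations must stay supported on the $i$-th component and disjoint from the nodes. On the reduced locus each point lies in the open $\GG_m$ of the $i$-th component, with coordinate $x_i$ invertible. Since the points are only given as maps from $\Spec R^{\red}$, that is, they are de Rham points, a point on the $i$-th component is just a point of $(\GG_m)_{\dR}$ in the coordinate $x_i$, and this is independent of the nilpotent deformation parameters $t_j$.

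This is exactly why the relative de Rham stack $(-)_{\dR/}$ enters. Its fibre over $\Spec R\to\hat{\AA}^{\ell+1}$ is $\hom(\Spec R^{\red},\GG_m^{n_i})=\GG_{m,\dR}^{n_i}(R)$, because over $R^{\red}$ the curve is the nodal chain itself. The fibre product therefore becomes $\hat{\AA}^{\ell+1}\times\prod_i\GG_{m,\dR}^{n_i}$. The stability condition of Definition~\ref{def:MgnRan} requires each of the $\ell$ components to carry at least one point, and this holds automatically since $n_i>0$. Stability also forces the configuration to lie exactly in the stratum with these component counts, which the completion guarantees.

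Next, the $\GG_m^{\ell}$-action on coordinates, $x_i\mapsto\lambda_{i-1}^{-1}x_i$ from the explicit action on $\overline{W}[\ell]$, acts diagonally on the two factors. Taking the quotient gives the stated description. Restricting to the open condition that the marked points are disjoint removes the diagonals $\Delta\subset\GG_m^{n_i}$ fibrewise, which gives the second claim. Disjointness between different components is automatic, because those components are separated by nodes.

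I expect the main obstacle to be the second step, namely verifying that no nontrivial twisting arises from the nilpotent directions of $\hat{\AA}^{\ell+1}$. Concretely, I must check that the gluing maps between the charts $U^{i,i+1}$ do not mix the coordinates $x_i$ with the $t_j$ in a way visible to de Rham points. I would first try to argue that a de Rham point only sees $R^{\red}$, over which all $t_j$ vanish, so the chart coordinate $x_i$ is canonical up to the $\GG_m^{\ell}$-action.
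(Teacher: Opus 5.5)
Your proposal is correct and follows essentially the paper's own argument. You identify the formal neighbourhood of the $(\ell+2)$-component stratum with $\hat{\AA}^{\ell+1}/\GG_m^{\ell}$ via the \'etale chart of Proposition~\ref{prop:M0-ss-str}, and you use that the configuration points are de Rham points, so they only see the constant nodal chain over $R^{\red}$; this is the paper's fibre product $(\prod_i \GG_m^{n_i}/\GG_m)_{\dR}\times_{(B\GG_m^{\ell})_{\dR}}\hat{\AA}^{\ell+1}/\GG_m^{\ell}$ unwound on $R$-points, and the twisting you worry about is ruled out for exactly the reason you give, which is the paper's remark that de Rham stacks are insensitive to formal completions.
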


\begin{proof}
    First note that, since de Rham stacks are not affected by formal completions, and over reduced points $(\bcM_{0,2+2,(n_1,\dots,n_{\ell})})_{\dR/}^{\land}$ contains curves with exactly $\ell+2$ irreducible components, we have \begin{align*}
        (\bcM_{0,2+2,(n_1,\dots,n_{\ell})})_{\dR/}^{\land} & \simeq (\bcM_{0,2,(n_1,\dots,n_{\ell})})_{\dR} \times_{(\fM_{0,2+2}^{\sst})_{\dR}} (\fM_{0,2+2}^{\sst,(\ell + 2)})^{\land} \\
        & \simeq (\GG_m^{n_1} / \GG_m \times \dotsb \times \GG_m^{n_{\ell}} / \GG_m)_{\dR} \times_{(\fM_{0,2+2}^{\sst})_{\dR}} (\fM_{0,2+2}^{\sst,(\ell + 2)})^{\land}
    \end{align*} where $(\fM_{0,2+2}^{\sst,(\ell+2)})^{\land}$ is the formal completion of $\fM_{0,2+2}^{\sst}$ along \[(\PP^1,0,1,\infty) \lor (-) \lor (\PP^1,0,1,\infty) : \fM_{0,2}^{\sst,(\ell)} \to \fM_{0,2+2}^{\sst}\] Here $B\GG_m^{\ell} \simeq \fM_{0,2}^{\sst,(\ell)} \subset \fM_{0,2}^{\sst}$ be the substack of exactly $\ell$ irreducible components. 
    
    By Proposition~\ref{prop:M0-ss-str}, the substack $\fM_{0,2}^{\sst,(\leq \ell + 2)}$ of at most $\ell + 2$ irreducible components admits an \'{e}tale cover by the stack $\AA^{\ell+1}/\GG_m^{\ell+2}$. In particular, \[(\fM_{0,2+2}^{\sst,(\ell+2)})^{\land} \simeq \hat{\AA}^{\ell+1}/\GG_m^{\ell}\] 
    and \[(\bcM_{0,2+2,(n_1,\dots,n_{\ell})})_{\dR/}^{\land} \simeq (\GG_m^{n_1} / \GG_m \times \dotsb \times \GG_m^{n_{\ell}} / \GG_m)_{\dR} \times_{(\AA^{\ell+1}/\GG^{\ell}_m)_{\dR}} \hat{\AA}^{\ell+1}/\GG^{\ell}_m\]
    
    Restricting to the disjoint locus then gives the desired result.
\end{proof}

\begin{definition}
    For a universal factorization algebra $\cA$, let $\VV \coloneqq \Vac(\cA)_{0,2+2}^{\epsilon}$ be the corresponding vacuum module over $\bcM_{0,2+2,\Ran}$. Let \[\VV^{[\geq p]} = \jmath^{[\geq p],!}\VV, \quad \VV^{[p]} = \hat{\imath}^{[p],!}\VV^{[\geq p]}, \quad \VV_{(n_1,\dots,n_{\ell})} = c_{(n_1,\dots,n_{\ell})}^!\VV^{[p]}\] Let $\cA_{\GG_m}$ be the factorization algebra over $\GG_m$ corresponding to $\cA$. For a finite set $I$, let $\cA_{\GG_m,I}$ be the corresponding D-module over $\GG_{m}^I$, and $\cA_{\GG_m,I}^{\disj}$ its restriction to $\GG_{m}^I \backslash \Delta$.
\end{definition}

\begin{proposition} \label{prop:ss}
    For a universal factorization algebra $\cA$, there is a spectral sequence \begin{align*}
        E^1_{p,q} = \bigoplus_{\ell > 0} \bigoplus_{\substack{n_i > 0 \\ \sum n_i = p + \ell}} H^{-\ell-p-q}\left(\omega_{\hat{\AA}^{\ell+1}} \bigotimes_{i=1}^{\ell} \Gamma_{\dR}(\GG_m^{n_i} \backslash \Delta, \cA^{\disj}_{\GG_m,n_i})\right)^{\GG_m^{\ell} \times S_{(n_1,\dots,n_{\ell})}} \Rightarrow & H_{p+q}\tilde{\fZ}_{\cA}~.
    \end{align*}
\end{proposition}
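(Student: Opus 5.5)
The plan is to build the spectral sequence from the filtration of $\bcM_{0,2+2,\Ran}^{\land}$ by the open subspaces $\bcM_{0,2+2,\Ran}^{[\geq p],\land}$. Write $\Gamma_c$ for compactly supported sections over these prestacks; they exist because the projections to a point factor through $\fM_{0,2+2}^{\sst,\land}$ (Lemma~\ref{lem:tr-p2}) with de Rham fibers. For each $p$ the open embedding $\jmath^{[\geq p]}$ and its complement give a cofiber sequence
\[\Gamma_c(\bcM^{[>p],\land}_{0,2+2,\Ran},\VV^{[>p]}) \to \Gamma_c(\bcM^{[\geq p],\land}_{0,2+2,\Ran},\VV^{[\geq p]}) \to \Gamma_c(\bcM^{[p],\land}_{0,2+2,\Ran},\VV^{[p]}).\]
Here the last term is the compactly supported sections along the formal completion $\hat\imath^{[p]}$ of the closed complement. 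I would use the standard recollement for ind-coherent sheaves on laft prestacks, namely $\jmath_!\jmath^! \to \id \to \hat\imath_*\hat\imath^!$, and then apply $\Gamma_c$.

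Setting $F_p = \Gamma_c(\bcM^{[\geq p],\land},\VV^{[\geq p]})$ gives a decreasing filtration $\dotsb \to F_{p+1}\to F_p \to \dotsb \to F_0 = \tilde\fZ_{\cA}$. The filtration starts at $F_0 = \tilde\fZ_{\cA}$ because the strata are empty for $p<0$. The associated graded pieces are $\Gamma_c(\bcM^{[p],\land},\VV^{[p]})$, and the spectral sequence of a filtered complex gives $E^1_{p,q} = H_{p+q}\operatorname{gr}_p \Rightarrow H_{p+q}\tilde\fZ_{\cA}$.

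Convergence needs $\lim_p F_p = 0$, or at least degreewise vanishing. I expect this to be the main obstacle, since each $F_p$ is an infinite colimit. My first approach is to use that $\Gamma_c$ commutes with the colimit presentation of the Ran space over cardinalities. Each finite-cardinality piece $\bcM_{0,2+2,I}$ together with a bounded number of components meets only finitely many strata. This makes the filtration exhaustive and locally finite on each cardinality piece, and passing to the colimit then gives convergence. One also has to check, using the homological degree bound from $\dim$, that the $E^1$ terms in each total degree are controlled.

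To compute $E^1$, apply Lemma~\ref{lem:decomp} to write $\operatorname{gr}_p$ as a direct sum over $\ell>0$ and $(n_i)$ with $\sum n_i = p+\ell$ of $\Gamma_c$ over $(\bcM^{\disj}_{0,2+2,(n_i)})^{\land}_{\dR/}/S_{(n_i)}$. Since we are in characteristic zero, the finite group quotient contributes $S_{(n_i)}$-invariants. Next, use Lemma~\ref{lem:description_of_graded_pieces} to identify each summand with the $\GG_m^{\ell}$-quotient of $\hat\AA^{\ell+1}\times\prod_i(\GG_m^{n_i}\setminus\Delta)_{\dR}$.

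On this space, the factorization property of $\Vac(\cA)$ identifies $\VV_{(n_i)}$ with $\omega_{\hat\AA^{\ell+1}}\boxtimes\boxtimes_i\cA^{\disj}_{\GG_m,n_i}$. The reason is that on the disjoint locus the vacuum module factorizes as the external product over components of the restriction of $\cA$ to each punctured component, and the universal property of $\cA$ (Theorem~\ref{thm:loc-UFA}) makes this trivial along the smoothing parameters. Compactly supported sections over a product of de Rham spaces are de Rham cohomology with the appropriate shift. Künneth then gives $\omega_{\hat\AA^{\ell+1}}\otimes\bigotimes_i\Gamma_{\dR}(\GG_m^{n_i}\setminus\Delta,\cA^{\disj}_{\GG_m,n_i})$.

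The $\GG_m^{\ell}$-quotient is handled as for $B\GG_m$ in Lemma~\ref{lem:cGamma02}: it takes weight-zero invariants together with a shift by $[\ell]$. This shift, together with the degree conventions for the homological indexing, accounts for the cohomological degree $-\ell-p-q$ in the stated $E^1$ term.
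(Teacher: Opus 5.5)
Your overall architecture matches the paper's: the filtration by the opens $\bcM^{[\geq p],\land}_{0,2+2,\Ran}$, then Lemma~\ref{lem:decomp}, Lemma~\ref{lem:description_of_graded_pieces}, the factorization identification of $\VV_{(n_1,\dots,n_\ell)}$, and $\GG_m^{\ell}$-invariants with a shift. However, you build the filtration with the wrong functors, and this breaks the identification of the graded pieces.

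The triangle $\jmath_!\jmath^!\to\id\to\hat{\imath}_*\hat{\imath}^!$ is not a recollement. The local cohomology triangle is $\hat{\imath}_*\hat{\imath}^!\to\id\to\jmath_*\jmath^!$, and the extension-by-zero triangle has $\hat{\imath}^*$, not $\hat{\imath}^!$, on the right. Neither $\jmath_!$ nor $\hat{\imath}^*$ is available here. In $\IndCoh$, $!$-pullback along an open embedding has no left adjoint; this is exactly what the proof of Lemma~\ref{lem:cGamma02} works around with pro-objects. Along the de Rham directions, the coefficients $\cA^{\disj}_{\GG_m,n}$ (e.g.\ $V\otimes\omega_{\GG_m}$) are in general not holonomic. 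Likewise, $\Gamma_c$ on the opens $\bcM^{[\geq p],\land}_{0,2+2,\Ran}$ is not supplied by the argument you cite. For the total space it comes from properness of the $\bcM_{0,2+2,I}$ over the base, and that is lost once closed strata are removed.

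Even granting some $\Gamma_c$-version, its graded pieces would be compactly supported de Rham cohomology of $*$-restrictions over the affine, non-proper $\GG_m^{n_i}\backslash\Delta$. Your step ``compactly supported sections over a product of de Rham spaces are de Rham cohomology with the appropriate shift'' holds only for the ind-proper factor $\hat{\AA}^{\ell+1}$. Along $\GG_m^{n_i}\backslash\Delta$ the two are Verdier dual, not shifts of each other. Already $\Gamma_{\dR,c}(\GG_m,\omega_{\GG_m})$ sits in degrees $-1,0$, while $\Gamma_{\dR}(\GG_m,\omega_{\GG_m})$ sits in degrees $-2,-1$. Also, the factorization isomorphisms identify $!$-restrictions to disjoint loci, not $*$-restrictions.

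The statement genuinely needs ordinary $\Gamma_{\dR}$. Its degree bound is what gives Lemma~\ref{lem:first-quad-ss}. And $\fL_0(V)=H^{-1}\Gamma_{\dR}(\GG_m,V\otimes\omega_{\GG_m})$, sections modulo total derivatives, is what Theorem~\ref{thm:H0ZA} uses.

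The paper instead uses the tower $\VV\to\jmath^{[\geq 1]}_*\VV^{[\geq 1]}\to\jmath^{[\geq 2]}_*\VV^{[\geq 2]}\to\dotsb$, whose successive fibers are the $*$-pushforwards of $\VV^{[p]}=\hat{\imath}^{[p],!}\VV^{[\geq p]}$. Pushing forward along the de Rham directions, where $*$ and $!$ agree on the total space by properness, and using $\Gamma_c$ only along $\hat{\AA}^{\ell+1}/\GG_m^{\ell}$, yields exactly the stated $E^1$. In this version $\tilde{\fZ}_{\cA}$ is filtered by sections supported on the closed complements. Convergence then becomes the vanishing of $\colim_p\Gamma(\bcM^{[\geq p],\land}_{0,2+2,\Ran},\VV^{[\geq p]})$, rather than of $\lim_p F_p$.
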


\begin{proof}
    From the filtration \[\bcM_{0,2+2,\Ran}^{[\geq 0], \land} \supset  \bcM_{0,2+2,\Ran}^{[\geq 1],\land} \supset \dotsb\] we get a filtration \[\VV = \VV^{[\geq 0]} \to \jmath^{[\geq 1]}_*\VV^{[\geq 1]} \to \jmath^{[\geq 2]}_*\VV^{[\geq 2]} \to \dotsb~.\]

    The fibers are given by \[\operatorname{fib}\left(\jmath^{[\geq p]}_*\VV^{[\geq p]} \to \jmath^{[\geq p+1]}_*\VV^{[\geq p+1]}\right) = \VV^{[p]}\] and so the associated spectral sequence is given by \[E^1_{p,q} = H^{-p-q}\Gamma\left(\bcM_{0,2+2,\Ran}^{[p],\land}, \VV^{[p]}\right)~.\]

    From Lemma~\ref{lem:decomp} we get a decomposition \[\Gamma\left(\bcM_{0,2+2,\Ran}^{[p],\land}, \VV^{[p]}\right) \simeq \bigoplus_{\ell > 0} \bigoplus_{\substack{n_i > 0 \\ \sum n_i = p + \ell}} \Gamma\left((\bcM_{0,2+2,(n_1,\dots,n_{\ell})}^{\disj})^{\land}_{\dR/}, \VV_{(n_1,\dots,n_{\ell})}\right)~.\] Under the isomorphism of Lemma~\ref{lem:description_of_graded_pieces}, each component can be described as \[\VV_{(n_1,\dots,n_{\ell})} \simeq \omega_{\hat{\AA}^{\ell+1}} \otimes \cA_{\GG_m,n_1}^{\disj} \otimes \dotsb \otimes \cA_{\GG_m,n_{\ell}}^{\disj}\] and so the result follows by taking homology, and using the isomorphism \[H^a(X/(\GG_m^{b} \times \Sigma_c), \cF) \simeq H^{a-b}(X,\tilde{\cF})^{\GG_m^b \times \Sigma_c}\] for any stack $X$ with an action of $\GG_m^b \times \Sigma_c$, and any $\cF \in \IndCoh(X/(\GG_m^{b} \times \Sigma_c))$ which corresponds to the $(\GG_m^{b} \times \Sigma_c)$-equivariant sheaf $\tilde{\cF} \in \IndCoh(X)^{\GG_m^{b} \times \Sigma_c}$ under the isomorphism \[\IndCoh(X/(\GG_m^{b} \times \Sigma_c)) \simeq \IndCoh(X)^{\GG_m^{b} \times \Sigma_c}~.\]
\end{proof}

\begin{lemma} \label{lem:first-quad-ss}
    Let $\cA = \Chev^{\ch}(\cB)$ for a coconnective universal chiral algebra $\cB$. Then  the spectral sequence of Proposition~\ref{prop:ss} satisfies $E^1_{p,q} = 0$ unless $p,q \geq 0$. In particular, \[H_0\tilde{\fZ}_{\cA} \simeq \operatorname{cof}\left(E^1_{0,0} \xrightarrow{d^1_{1,0}} E^1_{1,0}\right)~.\]
\end{lemma}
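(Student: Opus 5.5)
We need $E^1_{p,q}=0$ unless $p,q\ge 0$. Vanishing for $p<0$ is already built in: by stability $\bcM^{[p],\land}_{0,2+2,\Ran}=\emptyset$ for $p<0$, so $E^1_{p,q}=0$ there. The real content is the vanishing for $q<0$, i.e. for fixed $p$ the complex $\Gamma(\bcM^{[p],\land}_{0,2+2,\Ran},\VV^{[p]})$ should have no cohomology in degrees $-p-q>-p$. We will check this summand by summand using the formula of Proposition~\ref{prop:ss}: for fixed $\ell$ and $(n_1,\dots,n_\ell)$ with $\sum n_i=p+\ell$, the term is
\[H^{-\ell-p-q}\Bigl(\omega_{\hat{\AA}^{\ell+1}}\otimes\bigotimes_{i=1}^{\ell}\Gamma_{\dR}(\GG_m^{n_i}\backslash\Delta,\cA^{\disj}_{\GG_m,n_i})\Bigr)^{\GG_m^\ell\times S_{(n_1,\dots,n_\ell)}} .\]
So we must show that the complex inside the invariants lives in cohomological degrees $\le -\ell-p$. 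Equivalently, it should be concentrated in homological degrees $\ge \ell+p=\sum_i n_i$.

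We bound the factors separately.

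First, $\omega_{\hat{\AA}^{\ell+1}}$. As an ind-coherent sheaf on a formal scheme it is the dual of functions, sitting in a single degree. Taking $\Gamma$ of it contributes $k[\![t_0,\dots,t_\ell]\!]^{\vee}$ concentrated in degree $0$. The $\GG_m^\ell$-quotient shift is already accounted for by the $-\ell$ in the formula, using the isomorphism $H^a(X/\GG_m^b,\cF)\simeq H^{a-b}(X,\tilde\cF)^{\GG_m^b}$ from the proof of Proposition~\ref{prop:ss}. Invariants are exact, since $\GG_m$ is reductive and we are in characteristic zero, so they do not move degrees.

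Second, each factor $\Gamma_{\dR}(\GG_m^{n}\backslash\Delta,\cA^{\disj}_{\GG_m,n})$, which should be concentrated in homological degrees $\ge n$. Here we use $\cA=\Chev^{\ch}(\cB)$. On the disjoint locus, factorization gives $\cA^{\disj}_{\GG_m,n}\simeq(\cA_{\GG_m,1})^{\boxtimes n}|_{\GG_m^n\backslash\Delta}$, and $\cA_{\GG_m,1}$ is $\Chev^{\ch}(\cB)$ restricted to the main diagonal. With the conventions in which $\omega$ of a smooth curve is a right D-module in degree $0$ and coconnectivity of $\cB$ means $\cB[1]$ sits in nonpositive cohomological degrees, $\cA_{\GG_m,1}$ is a D-module on $\GG_m$ in cohomological degrees $\le 0$. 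Its $\boxtimes n$ restricted to the affine $n$-dimensional variety $U=\GG_m^n\backslash\Delta$ is then in degrees $\le 0$ as well. De Rham cohomology of a right D-module on a smooth affine $U$ of dimension $n$ is computed by the de Rham complex $\cM\otimes\wedge^{\bullet}T_U$, which occupies cohomological degrees $[-n,0]$ relative to $\cM$. The condition we need, however, is that the top-degree part is pushed down by $n$, i.e. $\Gamma_{\dR}$ lands in degrees $\le -n$.

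This is the main obstacle, and the degree bookkeeping has to be fixed carefully. The cleanest route is to use the convention in which $\Gamma_{\dR}$ of $\omega_U$ on an $n$-dimensional $U$ is $H^{\mathrm{BM}}_\bullet(U)$. On an affine (Artin vanishing) $n$-dimensional smooth variety this sits in homological degrees $\ge n$, i.e. cohomological degrees $\le -n$, for any D-module in the heart. The dual form of Artin vanishing says that de Rham pushforward of holonomic or general D-modules from an affine to a point is right t-exact up to the shift by $\dim$. We will invoke this for $U=\GG_m^n\backslash\Delta$, which is affine, and combine it with the degree bound $\le 0$ for $\cA^{\disj}$ coming from coconnectivity, writing $\cA^{\disj}$ as a colimit of shifted objects in the heart. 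This gives homological degrees $\ge n_i$ for each factor, and Künneth gives $\ge\sum n_i=p+\ell$. Hence $E^1_{p,q}=0$ for $q<0$.

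Finally, with $E^1$ supported in the first quadrant, $E^1_{0,q}$ for $q<0$ vanishes and total degree $0$ receives only $E^1_{0,0}$. The only differential into $E_{0,0}$ is $d^1\colon E^1_{1,0}\to E^1_{0,0}$, and higher $d^r$ into or out of $(0,0)$ hit zero groups. So $H_0\tilde{\fZ}_{\cA}$ is the cokernel of the map between $E^1_{0,0}$ and $E^1_{1,0}$, as stated; the direction of the arrow is the paper's indexing convention for $d^1_{1,0}$.
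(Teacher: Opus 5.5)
Your handling of $p<0$, of the factor $\omega_{\hat{\AA}^{\ell+1}}$, of exactness of invariants, and of the edge argument giving $H_0$ as the cokernel of $d^1_{1,0}$ is fine. The one step with real content is not justified, though: that $\Gamma_{\dR}(\GG_m^{n}\backslash\Delta,\cA^{\disj}_{\GG_m,n})$ lies in cohomological degrees $\le -n$.

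You only establish that $\cA^{\disj}_{\GG_m,n}$ lies in degrees $\le 0$. You then try to get the missing shift by $-n$ from $\Gamma_{\dR}$ itself, claiming that on a smooth affine $n$-fold the de Rham cohomology of any D-module in the heart lands in degrees $\le -n$. That is false. As you computed yourself, the de Rham complex $\cM\otimes\wedge^\bullet T_U$ occupies degrees $[-n,0]$ relative to $\cM$, and its top cohomology is typically nonzero. For example, $\Gamma_{\dR}(\GG_m,\Omega^1_{\GG_m})$ has $H^0=k\cdot dz/z$, and a $\delta$-module has $\Gamma_{\dR}=k$ in degree $0$.

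What does hold is right t-exactness of $\Gamma_{\dR}$ on an affine, with no shift; this is what Artin vanishing amounts to here. It gives the bound $\le -n$ for $\omega_U$ only because $\omega_U$ itself sits in degree $-n$. Nor can you freely renormalize $\Gamma_{\dR}$: the indexing $H^{-\ell-p-q}$ in Proposition~\ref{prop:ss} is fixed by the paper's conventions. In any single consistent convention, one of your two steps fails.

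The shift has to be carried by the object. In the paper's ind-coherent conventions, $\omega_{\GG_m}$ sits in cohomological degree $-1$; this is why $\fL_0(V)=H^{-1}\Gamma_{\dR}(\GG_m,V\otimes\omega_{\GG_m})$ is the top cohomology. So $\cA_{\GG_m,1}\simeq\cB_1\otimes\omega_{\GG_m}$ lies in degrees $\le -1$. Equivalently, one uses $\cB[1]$ with $\cB$ in degrees $\le 0$, not merely $\cB[1]$ in degrees $\le 0$; that weaker assumption is where your bound loses a shift. Hence $\cA^{\disj}_{\GG_m,n}\simeq(\cB_1\otimes\omega_{\GG_m})^{\boxtimes n}$ lies in degrees $\le -n$.

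Plain right t-exactness of $\Gamma_{\dR}$ on the smooth affine variety $\GG_m^n\backslash\Delta$ then gives degrees $\le -n$ for each factor. The Künneth formula gives $\le -\sum n_i=-(p+\ell)$ for the whole complex, which is exactly the paper's argument.
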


\begin{proof}
    Since $\GG_m^{n_i} \backslash \Delta$ is a smooth and affine variety of dimension $p+\ell$, and $\cA_{\GG_m,n_i}^{\disj} \simeq (\cB_1 \otimes \omega_{\GG_m})^{\boxtimes n_i} \mid \GG_m^{n_i} \backslash \Delta$ is concentrated in cohomological degree $\leq -n_i$, its de Rham cohomology is concentrated in degrees $\leq -n_i$. Altogether, we get that the complex $\omega_{\hat{\AA}^{\ell+1}} \bigotimes_{i=1}^{\ell} \Gamma_{\dR}(\GG_m^{n_i} \backslash \Delta, \cA^{\disj}_{\GG_m,I})$ is concentrated in degrees $\leq -\sum_{i=1}^{\ell} n_i = -(p + \ell)$. In particular, $H^{-\ell-p-q}(\omega_{\hat{\AA}^{\ell+1}} \bigotimes_{i=1}^{\ell} \Gamma_{\dR}(\GG_m^{n_i} \backslash \Delta, \cA^{\disj}_{\GG_m,I}))$ is zero unless $p,q \geq 0$.
\end{proof}

\subsection{The zeroth homology algebra}

In this section we fix a universal factorization algebra $\cA$ which is obtained from a vertex operator algebra $V$, as in~\cite[Chapter~19]{FBZ}. Note that in such case, the D-modules $\cA^{\disj}_{\GG_m,n}$ are given by \[\cA^{\disj}_{\GG_m,n} \simeq (V \otimes \omega_{\GG_m})^{\boxtimes n} \mid \GG_m^n \backslash \Delta\] where the differential $\partial_x \in \operatorname{Diff}(\Spec k[x^{\pm 1}])$ acts by \[\partial_x \cdot (v \otimes x^ndx) = Tv \otimes x^ndx + v \otimes nx^{n-1}dx\] where $T : V \to V$ is the vertex algebra differential. In this case, we can describe the algebra $H_0\tilde{\fZ}_{\cA}$ explicitly.

\begin{definition}
    Let $U(V)$ be the universal enveloping algebra of $V$, as in Definition~\ref{def:UV}. Consider the vector space \[k[\![t_0,t_{\infty}]\!]^{\lor} \simeq k((t_0,t_{\infty}))dt_0dt_{\infty} / k[\![t_0,t_{\infty}]\!]dt_0dt_{\infty} \simeq \Gamma(\hat{\AA}^1 \times \hat{\AA}^1, \omega_{\hat{\AA}^1 \times \hat{\AA}^1})~.\] Denote by $\fU(V)$ the subalgebra of the trivial deformation $U(V) \otimes k[\![t_0,t_{\infty}]\!]^{\lor}$ given by \[\fU(V) = \left<\sigma \otimes t_0^{m_0}t_{\infty}^{m_{\infty}}dt_0dt_{\infty} \vcentcolon \deg(\sigma) = m_{\infty} - m_{0}\right> \subset U(V) \otimes k[\![t_0,t_{\infty}]\!]^{\lor}~,\] and the ideal \[\fI(V) = \left< \sigma \otimes t_0^{m_0}t_{\infty}^{m_{\infty}}dt_0dt_{\infty} \vcentcolon \sigma \in (U(V)_{<1-m_0} \cdot U(V))_{m_{\infty}-m_0} = (U(V) \cdot U(V)_{>1+m_{\infty}})_{m_{\infty} - m_0} \right>\]
\end{definition}

\begin{remark} \label{rem:subspaces}
    We will be interested in the quotient space \[\fU(V)/\fI(V)~.\]Restricting to the subspace $\{m_{\infty} = -1\} \subset \fU(V)/\fI(V)$, we get $m_0 = \deg(\sigma) + 1$, and so the subspace is isomorphic to the quotient $U(V)/U(V)\cdot U(V)_{>0}$. Similarly, the subspace $\{m_0 = -1\} \subset \fU(V)/\fI(V)$ is isomorphic to the quotient $U(V)/U(V)_{<0} \cdot U(V)$. The subspace $\{m_0 = m_{\infty} = -1\}$ is given by further restricting to elements with $\deg(\sigma) = 0$, and we obtain the Zhu algebra \[U(V)_0/(U(V) \cdot U(V)_{>0})_0 \simeq U(V)_0/(U(V)_{<0} \cdot U(V))_0 \simeq A(V)~.\]
\end{remark}

The main result of this section is the following:
\begin{theorem} \label{thm:H0ZA}
    For a vertex operator algebra $V$, write $\tilde{\fZ}_V = \fZ_{\cA}$. Then: 
    \begin{enumerate}
        \item $H_0\tilde{\fZ}_{V}$ is isomorphic to the quotient $\fU(V) / \fI(V)$.
        \item $H_0\fZ^0_{V}$ is isomorphic to the Zhu algebra $A(V)$ (Definition~\ref{def:Zhu}).
        \item $H_0\fZ_{V}^+$ is isomorphic to the left $U(V)$-module $U(V)/U(V) \cdot U(V)_{>0}  \underset{U(V)_0}{\otimes} A(V)$ associated to $A(V)$.
        \item $H_0\fZ_{V}^-$ is isomorphic to the right $U(V)$-module $A(V) \underset{U(V)_0}{\otimes} U(V)/U(V)_{<0}\cdot U(V)$ associated to $A(V)$.
        \item  $H_0\fZ_V \simeq H_0\fZ^+_V \underset{H_0\fZ_{V}^0}{\otimes} H_0\fZ_V^-$ is isomorphic to the mode-transition algebra $\fA(V)$ (Definition~\ref{def:MTA}).
        \item The isomorphism in (2) is an isomorphism of associative algebras.
    \end{enumerate}
\end{theorem}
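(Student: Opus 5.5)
The plan is to use the spectral sequence of Proposition~\ref{prop:ss} together with the vanishing of Lemma~\ref{lem:first-quad-ss}. A vertex operator algebra $V$ is concentrated in degree zero, so it is of the form $\Chev^{\ch}(\cB)$ with $\cB = V[-1]$ coconnective. Hence
\[H_0\tilde{\fZ}_V \simeq \operatorname{cof}\bigl(E^1_{0,0} \xrightarrow{d^1} E^1_{1,0}\bigr).\]
The first step is to compute these two terms explicitly; I am reading the differential as running $E^1_{1,0}\to E^1_{0,0}$, so that the cokernel sits in $E^1_{0,0}$.

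For $p=0$ we need all $n_i=1$ with $\ell$ arbitrary. The summand is then
\[\Bigl(\omega_{\hat{\AA}^{\ell+1}}\otimes \bigotimes_{i=1}^{\ell} H_{\dR}(\GG_m, V\otimes\omega)\Bigr)^{\GG_m^\ell},\]
in top degree, and $H^{\mathrm{top}}_{\dR}(\GG_m, V\otimes\omega)$ is exactly $\fL(V)$, spanned by the $v_{[n]}$ modulo $\partial$. Taking $\GG_m^\ell$-invariants forces the grading of each $v_{[n]}$ to match the weights of the $t_i$. For $p=1$ one copy of $\GG_m^2\backslash\Delta$ appears. Its top de Rham cohomology with coefficients in $V^{\boxtimes 2}$ has a residue description: sections $v\otimes w\, f(x,y)(x-y)^{-N}$ give pairs of modes. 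The differential $d^1$ is the boundary (residue) map along the diagonal and along degenerations into two components. The residue along $x=y$ produces the $(u_{(j)}v)_{[\cdot]}$ terms, while splitting a component gives the ordered products $u_{[m]}v_{[n]}$ and $v_{[n]}u_{[m]}$ on adjacent components. The second step is to verify that the image of $d^1$ consists exactly of the Jacobi relations \eqref{eq:Jac_id} together with the vacuum relation. Once this is done, the direct sum over $\ell$ of $\ell$-fold ordered products, modulo these relations, reproduces the graded algebra $U(V)$ before completion, tensored with the $\omega_{\hat{\AA}^{\ell+1}}$ factor. The remaining parameters are the invariant monomials in $t_0,\dots,t_\ell$, which is where $k[\![t_0,t_\infty]\!]^\vee$ enters.

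The third step, which I expect to be the main obstacle, is to identify the ideal $\fI(V)$ and the degree constraint. The degree constraint $\deg\sigma=m_\infty-m_0$ comes from the $\GG_m^\ell$-weights, as in the proof of Lemma~\ref{lem:tr-p2}. The truncation $\sigma\in U(V)_{<1-m_0}U(V)$ should come from the stability and compact-support condition. Modes of too negative degree on the first component correspond to sections that extend regularly across $t_0=0$, and these are killed in the pushforward to the formal neighbourhood. A symmetric argument at $t_\infty$ handles the other end. I will check this by computing the $\ell=1$ case directly, using Lemma~\ref{lem:cover-sm-punct} coordinates $(x_i,t_i)$, and then inducting along the transition maps of Proposition~\ref{prop:M0-ss-str}. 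This yields (1).

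For (2)--(5) I restrict along the closed embeddings of Proposition~\ref{prop:M0-ss-str}. Setting $t_0=0$ or $t_\infty=0$ amounts to selecting the $m=-1$ graded pieces of $k[\![t]\!]^\vee$, and Remark~\ref{rem:subspaces} identifies the resulting quotients. The same filtration argument applies verbatim to the substacks, since the spectral sequence is compatible with these restrictions. In (5), the tensor product identification follows by comparing with Definition~\ref{def:MTA}. For (6), concatenation $\bcM_{0,2,\Ran}\times\bcM_{0,2,\Ran}\to\bcM_{0,2,\Ran}$ preserves the filtration additively in $p$. On $E^1_{0,0}$ it is given by juxtaposing ordered chains of modes, which is precisely the product in $U(V)_0$, so the isomorphism respects multiplication.
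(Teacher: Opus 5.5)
Your outline follows the paper's route: the $E^1$-page of Proposition~\ref{prop:ss}, with $E^1_{0,0}$ given by words in $\fL_0(V)$ decorated by monomials $t_i^{m_i}dt_i$, and $d^1=\mu-\nu$ (collision on a component versus splitting of a component). It then uses Remark~\ref{rem:subspaces} for the end loci and concatenation for (6). Rerunning the filtration on each substack is, if anything, more careful than the paper's tacit treatment of $H_0\fZ^0_V$ and $H_0\fZ^{\pm}_V$ as subspaces of $H_0\tilde{\fZ}_V$.

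However, your second and third steps cannot be done in the order you propose, and your mechanism for $\fI(V)$ is misplaced. Since $E^1_{0,0}$ is a direct sum, the image of $d^1$ cannot contain the Jacobi identities \eqref{eq:Jac_id}. For $c<0$ these are infinite sums that make sense only in the completion. What $d^1$ actually produces are the truncated relations \eqref{eq:relations-H0}. They are truncated because $\nu$ drops the terms regular in the parameter $t$ of the newly created node, see \eqref{eq:nu}. These match the relations of $U(V)$ only modulo $\fI(V)$, so there is no stage at which $U(V)$ appears and the ideal is imposed afterwards.

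Nor is the truncation an end effect at $t_0$ and $t_\infty$: regularity at $t_0$ only gives $m_0<0$. The ideal comes from $t_i^{m_i}dt_i=0$ in $\omega_{\hat{\AA}^1}$ for $m_i\geq 0$ at \emph{every} node $0\le i\le \ell$. Combined with the equivariance relation \eqref{eq:partial_deg}, $m_i=m_0+\sum_{j\le i}\deg\sigma_j$, this bounds every left partial product of a word. That bound is the condition encoded by $\fI(V)$, and it is what makes the paper's map $f$ a bijection on generators.

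Second, (5) does not follow from restriction along the embeddings of Proposition~\ref{prop:M0-ss-str}. The strictly nodal locus is $\{t_0t_1\cdots t_\ell=0\}$, which includes the interior nodes; it is not $\{t_0=0\}$ or $\{t_\infty=0\}$. The missing step is that a class supported there has $m_i=-1$ for some $i$. It therefore factors as $\zeta^0\cdot t_i^{-1}dt_i\cdot\zeta^\infty$, with $\zeta^0\cdot t_i^{-1}dt_i$ in the $\{m_\infty=-1\}$ piece and $t_i^{-1}dt_i\cdot\zeta^\infty$ in the $\{m_0=-1\}$ piece. When several $m_i$ equal $-1$, the segment between two such cuts lies in the $\{m_0=m_\infty=-1\}$ piece, i.e.\ in $A(V)$. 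Moving the cut across that segment is what yields the tensor product over $H_0\fZ^0_V$. Saying the identification ``follows by comparing with Definition~\ref{def:MTA}'' does not supply this.
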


We will prove that using the spectral sequence of Proposition~\ref{prop:ss}. In order to do that, we will first have to understand the differential \[d_{1,0}^1 : E^1_{1,0} \to E^1_{0,0}\]

\begin{notation}
    For $\ell > 0$ and $1 \leq i \leq \ell$, define $\pmb{n}^{\ell,i} \in \ZZ^{\ell}$ by \[\pmb{n}^{\ell,i}_j = \begin{cases} 1 & i \neq j \\ 2 & i = j \end{cases}~.\] Denote by \begin{align*}
        (d_{1,0}^1)^{\ell,i}_{\ell'}  : & H^{-\ell-1}\left(\omega_{\hat{\AA}^{\ell+1}} \bigotimes_{j=1}^{\ell} \Gamma_{\dR}\left(\GG_m^{\pmb{n}^{\ell,i}_j} \backslash \Delta, (V \otimes \omega_{\GG_m})^{\boxtimes \pmb{n}^{\ell,i}_j}\right)\right)^{\GG_m^{\ell} \times S_{\pmb{n}^{\ell,i}}} \\
        \to & H^{-\ell'}\left(\omega_{\hat{\AA}^{\ell'+1}} \bigotimes_{i=1}^{\ell'} \Gamma_{\dR}(\GG_m, V \otimes \omega_{\GG_m})\right)^{\GG_m^{\ell'}}
    \end{align*} the $((\ell,i),\ell')$-th matrix element of the differential $d^1_{1,0}$.
\end{notation}

\begin{definition}
    Let $\pi : \WW[1] \to \AA^1$ be the smooth family of Definition~\ref{def:univ-punct-sm-curve}. Let $\cA_{\WW[1]/\AA^1}$ be the $\AA^1$-factorization algebra over $\WW[1]$ corresponding to $\cA$, and \[\cA_{\WW[1]/\AA^1,2} = f_2^{!}\cA_{\WW[1]/\AA^1}\] its restriction along \[\begin{tikzcd}
	{(\WW[1] \times_{\AA^1}\WW[1])_{\AA^1\mh\dR}} & {} & {\Ran(\WW[1]/\AA^1)} \\
	& {\AA^1}
	\arrow["{f_2}", from=1-1, to=1-3]
	\arrow["{\pi_2}"', from=1-1, to=2-2]
	\arrow["{\pi_{\Ran}}", from=1-3, to=2-2]
\end{tikzcd}\]  
\end{definition}

Note that the restriction $\WW[1]_{\GG_m} \to \GG_m$ can be identified with the trivial family $\GG_m \times \GG_m \to \GG_m$ using either the $x$-coordinate or the $y$-coordinate. The restriction $\WW[1] \times_{\AA^1} \{0\}$ can be identified with the curve $\GG_m \sqcup \GG_m$ using the coordinate $x$ for the first copy and $y$ for the second. In particular, we have \[(\WW[1] \times_{\AA^1}\WW[1])_{\AA^1\mh\dR} \times_{\AA^1} \GG_m \simeq (\GG_m \times \GG_m)_{\dR} \times \GG_m \to \GG_m\] and \[(\WW[1] \times_{\AA^1}\WW[1])_{\AA^1\mh\dR} \times_{\AA^1} \hat{\AA}^1 \simeq (\GG_m \sqcup \GG_m)_{\dR} \times \hat{\AA}^1\] We have a map \begin{equation} \label{eq:restr-diag}
    (\cA_{\WW[1]_{\GG_m}/\GG_m,2}) \mid (\GG_m^2 \backslash \Delta)_{\dR} \times \GG_m \to (\cA_{\WW[1]_{\GG_m}/\GG_m,2})[1] \mid \Delta_{\dR} \times \GG_m
\end{equation} given by residue at the diagonal, and a map \begin{equation} \label{eq:residue-at-zero}
    \cA_{\WW[1]_{\GG_m}/\GG_m,2} \to \cA_{\WW[1]_{\hat{\AA}^1}/\hat{\AA}^1,2}[1]
\end{equation} given by the residue at $0$. From the factorization isomorphism we get an isomorphism \[(\cA_{\WW[1]_{\GG_m}/\GG_m,2}) \mid (\GG_m^2 \backslash \Delta)_{\dR} \times \GG_m \simeq (\cA_{\WW[1]_{\GG_m}/\GG_m,1})^{\otimes^!_{\GG_m} 2} \mid (\GG_m^2 \backslash \Delta)_{\dR} \times \GG_m\] where $\cA_{\WW[1]/\AA^1,1}$ is the restriction of $\cA_{\WW[1]/\AA^1}$ along $\WW[1]_{\AA^1\mh\dR} \to \Ran(\WW[1]/\AA^1)$. Since $\cA$ is obtained from a vertex operator algebra $V$, we can write $\cA_{\WW[1]_{\GG_m}/\GG_m,1} \simeq V \otimes \omega_{\GG_m^2}$. Thus, \eqref{eq:restr-diag} defines a family of chiral multiplication maps \begin{equation} \label{eq:ch-mult}
    \omega_{\GG_m} \otimes (V \otimes \omega_{\GG_m})^{\otimes_{\GG_m}^! 2} \mid (\GG_m \backslash \Delta)_{\dR} \times \GG_m \to \omega_{\GG_m} \otimes (V \otimes \omega_{\GG_m})^{\otimes_{\GG_m}^! 2}[1] \mid \Delta_{\dR} \times \GG_m 
\end{equation} while \eqref{eq:residue-at-zero} defines by restriction a map \begin{equation} \label{eq:residue-at-zero-disj}
    \omega_{\GG_m} \otimes (V \otimes \omega_{\GG_m})^{\otimes_{\GG_m}^! 2} \mid (\GG_m \backslash \Delta)_{\dR} \times \GG_m \to \omega_{\hat{\AA}^1} \otimes (V \otimes \omega_{\GG_m})[1]
\end{equation}

\begin{definition}
    Let \begin{align*}
        \fL_0(V) & = H^{-1}\Gamma_{\dR}(\GG_m, V \otimes \omega_{\GG_m}) \\
        \fL_1(V) & = H^{-2}\Gamma_{\dR}(\GG_m^2 \backslash \Delta, (V \otimes \omega_{\GG_m})^{\boxtimes 2})^{S_2}~.
    \end{align*} 
    
    Let \[\mu : \omega_{\GG_m} \otimes \fL_1(V) \to \omega_{\GG_m} \otimes \fL_0(V)\] be the map induced by~\eqref{eq:ch-mult}, and \[\nu : \omega_{\GG_m} \otimes \fL_{1}(V) \to \omega_{\hat{\AA}^1} \otimes \fL_{0}(V) \otimes \fL_{0}(V)\] the map induced by~\eqref{eq:residue-at-zero-disj}. Denote by \[\mu^{\GG_m} : \fL_1(V) \to \fL_0(V)\] and \[\nu^{\GG_m} : \fL_1(V) \to (\omega_{\hat{\AA}^1} \otimes \fL_0(V) \otimes \fL_0(V))^{\GG_m}\] the $\GG_m$-fixed points of the above maps, with $\GG_m$ acting diagonally on the tensor products.
\end{definition}

Explicitly, if we choose coordinates $(z,w,z',w',t)$ for the family $\WW[1] \times_{\AA^1} \WW[1] \to \AA^1$, then elements of $\omega_{\GG_m} \otimes H^{-2}\Gamma_{\dR}(\GG_m^2 \backslash \Delta, (V \otimes \omega_{\GG_m})^{\boxtimes 2})$ are spanned by sections of the form \[\xi = t^mdt \otimes [u \boxtimes v \otimes z^az'^b(z-z')^cdzdz'];\quad u,v \in V,\; a,b,c,m \in \ZZ\] (here $[-]$ denotes a class in de Rham cohomology), and elements of $\omega_{\GG_m} \otimes \fL_1(V)$ are given by $S_2$-equivariant sections, which are spanned by sections of the form $\xi + \sigma\xi$ with $\xi$ as above, where $\sigma$ is the generator of the $S_2$ action. Elements of $\omega_{\GG_m} \otimes \fL_0(V)$ are spanned by sections of the form \[t^mdt \otimes [v \otimes z'^ndz'];\quad v \in V, \; m,n \in \ZZ~,\] and elements of $\omega_{\hat{\AA}^1} \otimes \fL_0(V) \otimes \fL_0(V)$ are spanned by sections of the form \[t^mdt \otimes [u \otimes z^{n}dz] \otimes [u \otimes w'^{-n'}dw']; \quad u,v \in V,\; m < 0,\; n,n' \in \ZZ~.\] The map $\mu$ can be then expressed as \begin{equation} \label{eq:mu}
    \begin{split}
        \mu : \xi + \sigma \xi \mapsto & t^mdt \otimes \operatorname{Res}_{z-z'} \sum_{n \in \ZZ} u_{(n)}v \otimes z^az'^b(z-z')^{c-n-1}d(z-z')dz' \\
        = & t^mdt \otimes  \operatorname{Res}_{z-z'} \sum_{n \in \ZZ} u_{(n)}v \otimes ((z-z')+z')^az'^b(z-z')^{c-n-1}d(z-z')dz' \\
        = & t^mdt \otimes \operatorname{Res}_{z-z'} \sum_{n \in \ZZ} u_{(n)}v \otimes \sum_{j \geq 0} \binom{a}{j} z'^{a+b-j}(z-z')^{c+j-n-1}d(z-z')dz' \\
        = & t^mdt \otimes \sum_{j \geq 0} u_{(c+j)}v \otimes \binom{a}{j} z'^{a+b-j}dz' \\
        = & t^{m + a + b + c - \deg(u) - \deg(v)}dt \otimes \sum_{j \geq 0} u_{(c+j)}v \otimes \binom{a}{j} w'^{-(a+b-j)}dw'^{-1}
    \end{split}
\end{equation}

The map $\nu$ can be expressed as \begin{equation} \label{eq:nu}
    \begin{split}
        \nu : \xi + \sigma \xi = & t^{m}dt \otimes [u \boxtimes v \otimes z^az'^{b}(z-z')^cdzdz'] \\
        - & t^{m}dt \otimes [v \boxtimes u \otimes z^bz'^{a}(-1)^c(z-z')^cdzdz'] \\
        = & t^{m + b + 1 - \deg(v)}dt \otimes [u \boxtimes v \otimes z^aw'^{-b}(z-tw'^{-1})^cdzdw'^{-1}] \\
        - & t^{m + a + 1 - \deg(u)}dt \otimes [v \boxtimes u \otimes z^bw'^{-a}(-1)^c(z-tw'^{-1})^cdzdw'^{-1}] \\
        \mapsto & t^{m + b + 1 - \deg(v)}dt \otimes [u \boxtimes v \otimes z^aw'^{-b}(z-tw'^{-1})^cdzdw'^{-1}] \mod \text{regular in }t\\
        - & t^{m + a + 1- \deg(u)}dt \otimes [v \boxtimes u \otimes z^bw'^{-a}(-1)^c(z-tw'^{-1})^cdzdw'^{-1}] \mod \text{regular in }t\\
        = & \sum_{j = 0}^{\deg(v)-m-b-1} (-1)^j\binom{c}{j} t^{m+b+j-\deg(v)}dt \otimes [u \boxtimes v \otimes z^{a+c-j}w'^{-(b+j)}dzdw'^{-1}] \\
        - & \sum_{j=0}^{\deg(u)-m-a-1} (-1)^{j+c}\binom{c}{j} t^{m+a+j-\deg(u)}dt \otimes [v \boxtimes u \otimes z^{b+c-j}w'^{-(a+j)}dzdw'^{-1}]
    \end{split}
\end{equation}

\begin{remark}
    $\fL_0(V)$ coincides with the underlying vector space of the Lie algebra $\fL(V)$ defined in~\ref{def:UV}, where the element $[v \otimes x^ndx] \in \fL_0(V)$ corresponds to the element $v_{[n]} \in \fL(V)$.
\end{remark}

\begin{lemma} \label{lem:str_E1}
    For $\ell > 0$ and $1 \leq i \leq \ell$, the $(\ell,i)$-th term of $E_{1,0}^1$ is given by \[\left(\omega_{\hat{\AA}^{\ell+1}} \otimes \fL_0(V)^{\otimes i-1} \otimes \fL_1(V) \otimes \fL_{0}(V)^{\otimes \ell-i}\right)^{\GG_m^{\ell}}~.\] For $\ell' > 0$, the $\ell'$-th term of $E^1_{0,0}$ is given by \[\left(\omega_{\hat{\AA}^{\ell'+1}} \otimes \fL_0(V)^{\otimes \ell'}\right)^{\GG_m^{\ell'}}\] For $\ell' \notin \{\ell, \ell + 1\}$, $(d_{1,0}^1)^{\ell,i}_{\ell'} = 0$. The map $(d_{1,0}^1)^{\ell,i}_{\ell}$ is given by $\mu^{\GG_m}$ applied to the $i$-th component, and the map $(d_{1,0}^1)^{\ell,i}_{\ell+1}$ is given by $\nu^{\GG_m}$ applied to the $i$-th component.
\end{lemma}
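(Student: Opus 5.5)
The plan has three parts: identify the $E^1$ terms, show the differential only links neighbouring strata, and read off the two surviving components by a local computation.

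\emph{Identifying the $E^1$ terms.} I will specialize the formula of Proposition~\ref{prop:ss} to $p=1$ and $p=0$.

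For $p=0$ the condition $\sum n_i = \ell$ with $n_i>0$ forces $n_i=1$ for all $i$. Each factor is then $\Gamma_{\dR}(\GG_m, V\otimes\omega_{\GG_m})$, concentrated in degrees $\leq -1$ (as in the proof of Lemma~\ref{lem:first-quad-ss}). Taking $H^{-\ell}$ picks out the top class $\fL_0(V)^{\otimes \ell}$, by a K\"unneth argument.

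For $p=1$ we need $\sum n_i=\ell+1$, so exactly one index $i$ has $n_i=2$, i.e.\ $\pmb n=\pmb n^{\ell,i}$. Taking $H^{-\ell-1}$ again isolates the top degree in each factor. The $S_{\pmb n^{\ell,i}}=S_2$-invariants act only on the $i$-th factor and produce $\fL_1(V)$ there.

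In both cases $\omega_{\hat\AA^{\ell+1}}$ sits in degree $0$, being a module over a formal power series ring. Taking $\GG_m^\ell$-invariants gives the stated descriptions.

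\emph{Locality of the differential.} The differential $d^1_{1,0}$ is the connecting map for the two-step filtration $\VV^{[\geq 0]}\supset \VV^{[\geq1]}$ restricted to $\VV^{[1]}\to\VV^{[0]}[1]$. It is therefore computed on a formal neighbourhood of the boundary of the stratum $(\bcM^{\disj}_{0,2+2,\pmb n^{\ell,i}})^{\land}_{\dR/}$ inside the closure in $\bcM^{[\geq 0],\land}_{0,2+2,\Ran}$, intersected with the $p=0$ strata.

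Boundary points of a stratum of type $\pmb n^{\ell,i}$ lying in $p=0$ arise in exactly two ways:
\begin{enumerate}
\item the two points on the $i$-th component collide, which keeps $\ell$ components but drops $|I|$ by one;
\item the $i$-th component bubbles into two components $\GG_m\lor\GG_m$, each carrying one point, which raises $\ell$ to $\ell+1$.
\end{enumerate}
Other degenerations (a point escaping to $0$ or $\infty$, or further bubbling) either violate stability or change $p$ by more than one. This proves $(d^1_{1,0})^{\ell,i}_{\ell'}=0$ for $\ell'\notin\{\ell,\ell+1\}$.

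\emph{The two surviving components.} Because the configuration is a product over components (Lemma~\ref{lem:description_of_graded_pieces}), the connecting map is the identity on components $j\neq i$ and a local map on the $i$-th one. It therefore suffices to treat $\ell=1$, using the one-parameter family $\WW[1]\to\AA^1$ with its two-point configuration space.

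In case (1), the connecting map is the residue along the diagonal $\Delta\subset\GG_m^2$. By the factorization isomorphism this residue is the chiral multiplication~\eqref{eq:restr-diag}, and on $\GG_m$-invariant cohomology classes it gives $\mu^{\GG_m}$.

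In case (2), the neighbouring stratum is the nodal fiber $t=0$ of $\WW[1]$, where the two points move to different components. The connecting map is the residue at $t=0$ of~\eqref{eq:residue-at-zero}, and its restriction to the disjoint locus is $\nu$ by definition. Taking $\GG_m$-invariants gives $\nu^{\GG_m}$.

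\emph{Main obstacle.} The hard step is justifying that the abstract connecting morphism of the filtration coincides with these residues, including signs and the shift $[1]$. Specifically, one must show that $\hat\imath^{[1]}$ followed by the boundary map equals $!$-restriction to the divisor followed by the residue (Cousin/localization) triangle, and that this triangle is compatible with the $\GG_m^\ell\times S_{\pmb n}$-quotients. I would do this first in the model case $\ell=1$ on $(\WW[1]\times_{\AA^1}\WW[1])_{\AA^1\mh\dR}$, using the localization sequence $\jmath_*\jmath^!\to$ (sections supported on the divisor)$[1]$. I would then bootstrap to general $\ell$ through the étale charts of Proposition~\ref{prop:M0-ss-str}.
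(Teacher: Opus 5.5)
Your proposal is correct and follows the paper's argument: you specialize Proposition~\ref{prop:ss} to $p=0,1$ and take top-degree cohomology, with $S_2$-invariants giving $\fL_1(V)$, and you read off the differential from the two boundary pieces of the closure of the $\pmb{n}^{\ell,i}$-stratum, namely collision (giving $\mu^{\GG_m}$) and bubbling of the $i$-th component (giving $\nu^{\GG_m}$); your residue/Cousin justification is more detailed than the paper's sketch. One small correction: a point approaching $0$ or $\infty$ of the $i$-th component is not an excluded degeneration but is exactly your case (2), and it occurs in two ways exchanged by $S_2$, which is why $\nu$ carries two terms.
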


\begin{proof}
    The description of the $(i,\ell)$-th and $\ell'$-th components follows directly from the definition of $\fL_0(V), \fL_1(V)$ and the fact that we are taking the highest degree cohomology. The description of the boundary maps follows from the fact that the closure of $(\bcM_{0,2+2,(\pmb{n}^{\ell,i})})^{\land}_{\dR/}$ inside $\bcM_{0,2+2,\Ran}^{[\leq 1],\land}$ contains two components --- the first is $(\bcM_{0,2+2,\pmb{1}_{\ell}})_{\dR/}^{\land}$, corresponding to the collision of the two marked points on the $i$-th component, and the second is $(\bcM_{0,2+2,\pmb{1}_{\ell+1}})^{\land}_{\dR/}$, corresponding to the two degenerations of the $i$-th component into two components, where the second degeneration is obtained from the first by swapping the two marked points.
\end{proof}

\begin{proof} [Proof of Theorem~\ref{thm:H0ZA}]
    First we prove (1). By Lemma~\ref{lem:first-quad-ss}, $H_0\tilde{\fZ}_{V}$ is the cofiber of $d^1_{0,1}$. By Lemma~\ref{lem:str_E1}, $E_{0,0}^1$ is spanned by elements of the form \begin{equation} \label{eq:zeta}
        \zeta = t_0^{m_0}dt_0 \cdot \sigma_1 \cdot t_1^{m_1}dt_1 \dotsb \sigma_{\ell} \cdot t_{\ell}^{m_{\ell}}dt_{\ell}; \quad \sigma_i = [v^i \otimes z_i^{n_i}dz_i] \in \fL_0(V), m_i < 0
    \end{equation} for some $\ell>0$, where $z_i$ is a coordinate on the $i$-th irreducible component, such that the weight of the $i$-th $\GG_m$ action is zero for $1 \leq i \leq \ell$: \[\deg_i(\zeta) = m_{i-1} + 1 + \deg(\sigma_j) - (m_{i} + 1) = 0~.\] By induction, we get \begin{equation} \label{eq:partial_deg}
        0 \geq m_{i} + 1 = \sum_{j=1}^{i}\deg(\sigma_j) + m_0 + 1
    \end{equation} and in particular \begin{equation} \label{eq:total_deg}
        \sum_{j=1}^{\ell}\deg(\sigma_j) = m_{\ell} - m_0~.
    \end{equation}

    Again by Lemma~\ref{lem:first-quad-ss}, the image of $d_{1,0}^1$ is spanned by $\mu(\xi) - \nu(\xi)$ for $\xi$ a section of $\omega_{\hat{\AA}^{\ell^-+\ell^++2}} \otimes \fL_0(V)^{\otimes \ell^-} \otimes \fL_1(V) \otimes \fL_0(V)^{\otimes \ell^+}$ of the form \[\xi = \zeta^0 \cdot \tau \cdot \zeta^{\infty}\] with \[\zeta^{s} = t_0^{m_0^{s}}dt_0 \cdot \sigma^{s}_1 \cdot t_1^{m^{s}_1}dt_1 \dotsb \sigma_{\ell^{s}}^{s} \cdot t_{\ell^{s}}^{m_{\ell^{s}}^{s}}dt_{\ell^{s}};\quad s \in \{0,\infty\}\] as in \eqref{eq:zeta}, and with \[\tau = [u \boxtimes v \otimes z^{a}z'^{b}(z-z')^{c}dzdz']\] for $u,v \in V$, $a,b,c,m \in \ZZ$. Namely, $\xi$ represents a section over the locus with $\ell^{0} + \ell^{\infty} + 1$ irreducible components, with the $\ell^0+1$-th component supporting two weight-$\epsilon$ points with coordinates $z,z'$, and any other component supporting a single point. We require that $\xi$ will be $\GG_m^{\ell^{0} + \ell^{\infty} + 1}$-equivariant. Equivariance with respect to the first $\ell^{0}$ and last $\ell^{\infty}$ copies is as in~\eqref{eq:partial_deg} and~\eqref{eq:total_deg}. For the action of the $\ell^{0}+1$-th copy, equivariance implies \[\deg_{\ell^{0}+1}(\xi) = m_{\ell^{0}} + \deg(\tau) - m_{0}^{\infty} = 0~.\] Such $\xi$ can be identified with the $\GG_m^{\ell^0+\ell^{\infty}+2}$-equivariant section of $\omega_{\hat{\AA}^{\ell^{0}+1} \times \GG_m \times \hat{\AA}^{\ell^{\infty}+1}} \otimes \fL_0(V)^{\otimes \ell^0} \otimes \fL_1(V) \otimes \fL_0(V)^{\otimes \ell^{\infty}}$ given by \[\zeta^0 \cdot (t^{m}dt \cdot \tau) \cdot \zeta^{\infty};\quad m = m_0^{\infty} = m^0_{\ell^0} + \deg(\tau)~.\] In particular \[m_{\ell^{\infty}}^{\infty} - m_0^{0} = \sum_{i = 1}^{\ell^{0}} \deg(\sigma_i^{0}) + \deg(\tau) + \sum_{i=1}^{\ell^{\infty}} \deg(\sigma^{\infty}_i)\]
    
    Using the description in~\eqref{eq:mu} and~\eqref{eq:nu}, and the identification $z^{\infty}_{1} = w'^{-1} = t^{-1}z'$, we get the relations: \begin{equation} \label{eq:relations-H0}
        \begin{split}
            & \zeta^0 \cdot \sum_{j = 0}^{\deg(v)-m-b-1} (-1)^j\binom{c}{j} t^{m+b+j-\deg(v)}dt \cdot [u \boxtimes v \otimes z^{a+c-j}(z_1^{\infty})^{b+j}dzdz_1^+] \\
            - & \sum_{j=0}^{\deg(u)-m-a-1} (-1)^{j+c}\binom{c}{j} t^{m+a+j-\deg(u)}dt \otimes [v \boxtimes u \otimes z^{b+c-j}(z_1^{\infty})^{a+j}dzdz_1^{\infty}] \cdot \zeta^{\infty} \\
            = & \zeta^0 \cdot \left(t^{m - \deg(\tau)}dt \cdot \sum_{j \geq 0} \otimes u_{(c+j)}v \binom{a}{j} (z_1^{\infty})^{a+b-j}dz^{\infty}_1\right) \cdot \zeta^{\infty}~.\\
        \end{split}
    \end{equation}
    Define \[f : E_{0,0}^1 \to \fU(V) / \fI(V)\] by \[f : t_0^{m_0}dt_0 \cdot [v^1 \otimes z_1^{n_1}] \cdot t_1^{n_1}dt_1 \dotsb [v^{\ell} \otimes z_{\ell}^{n_{\ell}}dz_{\ell}] \cdot t^{m_{\ell}}_{\ell}dt_{\ell} \mapsto [t_0^{m_0}dt_0 \cdot v^1_{[n_1]} \dotsb v^{\ell}_{[n_{\ell}]}]\cdot t_{\infty}^{m_{\ell}}dt_{\infty}~.\] By~\eqref{eq:partial_deg}, any left factor $t_0^{m_0}dt_0 \cdot [v^1 \otimes x_1^{n_1}] \dotsb [v^i \otimes x_{i}^{n_i}dx_i] \cdot t^{m_i}_idt_i$ has degree $m_i - m_0 < -m_0$, and by~\eqref{eq:total_deg} the total degree is $m_{\infty} - m_0$, so $f$ is a bijection on generators. The relations on the target are given by the truncated Jacobi relations \begin{equation}
        \begin{split}
            t_0^{m_0}t_{\infty}^{m_{\infty}}dt_0dt_{\infty} \otimes & \sum_{j = 0}^{\deg(v^{i+1}_{[b]}v^{i+2}_{[n_{i+2}]}\dotsb v^{\ell+1}_{[n_{\ell+1}]})-m_{\infty}-1} (-1)^j\binom{c}{j} v^1_{[n_1]}\dotsb v^{i}_{[a+c-j]}v^{i+1}_{[b+j]}\dotsb v^{\ell+1}_{[n_{\ell+1}]} \\
            - & \sum_{j=0}^{\deg(v^{i}_{[a]}v^{i+2}_{[n_{i+2}]}\dotsb v^{\ell+1}_{[n_{\ell+1}]})-m_{\infty}-1}(-1)^{j+c}\binom{c}{j}v^1_{[n_1]}\dotsb v^{i+1}_{[b+c-j]}v^{i}_{[a+j]} \dotsb v^{\ell+1}_{[n_{\ell+1}]} \\
            = & \sum_{j \geq 0}\binom{a}{j}v^1_{[n_1]} \dotsb (v^i_{(c+j)}v^{i+1})_{[a+b-j]} \dotsb v^{\ell+1}_{[n_{\ell+1}]}
        \end{split}
    \end{equation} for any $1 \leq i \leq \ell$. Since we have \[m_{\ell+1} - m_{i_0} = \deg(v^{i_0+1}_{[n_{i_0+1}]}\dotsb v^{\ell+1}_{[n_{\ell+1}]})~,\] this is precisely the image under $f$ of the relations~\eqref{eq:relations-H0}, and we get an induced isomorphism \[\overline{f} : E_{0,0}^{1}/E_{1,0}^1 \iso \fU(V)/\fI(V)~.\]

    To prove (5), note that $H_0\fZ_V \subset H_0\tilde{\fZ}_V$ is given by restriction to the locus of at least one node, namely the product $t_0 \dotsb t_{\ell} = 0$ is zero. A section $\zeta$ is supported on this subspace if $m_i = -1$ for some $0 \leq i \leq \ell$. In particular, $\zeta$ can be written as a product $\zeta = \zeta^0 \cdot t_i^{-1}dt_i \cdot \zeta^{\infty}$, where \begin{align*}
        \zeta^0 & = t_0^{m_0}dt_0 \cdot \sigma_1 \dotsb t_{i-1}^{m_{i-1}}dt_{i-1} \cdot \sigma_{i} \\
        \zeta^{\infty} & = \sigma_{i+1} \cdot t_{i+1}^{m_{i+1}}dt_{i+1} \dotsb \sigma_{\ell} \cdot t^{m_{\ell}}dt_{\ell}~.
    \end{align*} The section $\zeta^0 \cdot t_{i}^{-1}dt_i$ is an element of the subspace of $\fU(V)/\fI(V)$ where $m_{\infty} = -1$, while the section $t_i^{-1}dt_i \cdot \zeta^{\infty}$ is an element of the subspace $\{m_0=-1\}$. Thus, the result follows from Remark~\ref{rem:subspaces}.

    For (2), (3), and (4), note that the subspace $H_0\fZ_V^0$ (resp. $H_0\fZ_V^+$, resp. $H_0\fZ_V^-$) corresponds to the subspace where first and last component (resp. last, resp. first) components are separated by a node, namely $m_0 = m_{\infty} = -1$ (resp. $m_{\infty} = -1$, resp. $m_0 = -1$), so the result follows again from Remark~\ref{rem:subspaces}.

    Finally, to prove (6), note that the associative algebra structure on $H_0\fZ^0_{V}$ is given by concatenation of components, namely \begin{align*}
        & (t_0^{-1}dt_0 \cdot \sigma_1 \cdot t_1^{m_1}dt_1 \dotsb \sigma_{\ell} \cdot t_{\ell}^{-1}dt_{\ell}) \cdot (t_0^{-1}dt_0 \cdot \tau_1 \cdot t_1^{n_1}dt_1 \dotsb \cdot \tau_{r} \cdot t^{n_r}_rdt_r) \\
        = & t_0^{-1}dt_0 \cdot \sigma_1 \cdot t_1^{m_1}dt_1 \dotsb \sigma_{\ell} \cdot t_{\ell}^{-1}dt_{\ell} \cdot \tau_1 \cdot t_{\ell+1}^{n_1}dt_{\ell+1} \dotsb \cdot \tau_{r} \cdot t^{n_r}_{\ell+r}dt_{\ell+r}~. 
    \end{align*}
    Since in this case the data of $m_{i}$ is redundant (as it is determined by the degrees of $\sigma_j$), this corresponds exactly to the concatenation product on $A(V)$, given by \[\sigma_1 \dotsb \sigma_{\ell} \cdot \tau_1 \dotsb \tau_{r}~.\]
\end{proof}

\printbibliography

\end{document}